\DeclareMathAlphabet{\mathpzc}{OT1}{pzc}{m}{it}
\def\simple{plain}
\def\K{{\Bbb K}}
\def\bN{\mathbb{N}}
\def\bC{\mathbb{C}}
\def\bR{\mathbb{R}}
\def\RPo{{\mathbb{RP}^1}}
\def\RPt{{\mathbb{RP}^2}}
\def\KPt{{\mathbb{KP}^2}}
\def\KPn{{\mathbb{KP}^n}}
\def\CPo{{\mathbb{CP}^1}}
\def\Rt{{\mathbb{R}^2}}
\def\cA{{\cal A}}
\def\cB{{\cal B}}
\def\cE{{\cal E}}
\def\cI{{\cal I}}
\def\cK{{\cal K}}
\def\cH{{\cal H}}
\def\cF{{\cal F}}
\newtheorem{definition}{Definition}
\newtheorem{theorem}{Theorem}
\newtheorem{corollary}{Corollary}
\newtheorem{example}{Example}
\newtheorem{remark}{Remark}
\newtheorem{proposition}{Proposition}
\newtheorem{conjecture}{Conjecture}
\newcolumntype{S}{>{\centering\arraybackslash} m{.475\linewidth}}
\newcolumntype{T}{>{\arraybackslash} m{10.5cm}}
\newcolumntype{U}{>{\centering\arraybackslash} m{2cm}}
\title{%
  ``Simple Dynamics'' conjectures\\ for some real Newton maps on $\bR^2$.
}
\author{Roberto De Leo\\ \small Howard University, Washington DC 20059 (USA)}
\begin{document}
\maketitle
{\small\noindent {\bf Keywords}: Newton's Method; Barna's theorem; Discrete Dynamical Systems; Attractors; Repellors; Iterated Function Systems.}
\begin{abstract}
  We collect from several sources some of the most important results on the forward and backward limits of points under real and complex rational functions,
  and in particular real and complex Newton maps, and we provide numerical evidence that a fundamental result by B.~Barna on the dynamics of Newton's
  method on $\bR$ extends to $\bR^2$.
\end{abstract}
\section{Introduction}
One of the most natural ways to understand the behavior of a continuous surjective map $f$ of a compact manifold $M$ into itself is studying the asymptotics
of the forward and backward orbits of the points of $M$ under $f$. Among the simplest things that can happen is that there is some finite number of
attracting fixed points $c_i$ 
such that the sequence of iterates $\{f^n(x)\}$ converges to one of them for almost all $x\in M$ (with respect to any measure equivalent to the Lebesgue
measure on each chart) and that, again for some full measure set,  the sets $f^{-n}(x)$ converge, in some suitable sense, to the set of points whose forward
iterates do not converge to any $c_i$.
In other words, the action of $f$ on $M$ is, asymptotically, to thicken points near the $c_i$ while, at the same time, thinning them out near the
boundaries of the basins of attraction of the $c_i$. We call functions with such behavior {\em \simple}.

While the large diversity and complexity of behaviors of continuous maps of a manifold $M$ into itself suggests that in the general case the situation
is much more complicated, it was a surprising discovery that the same is true even in case of very elementary maps such as quadratic polynomials in one
variable (e.g. the logistic map) or piecewise linear polynomials in one variable (e.g. the tent map) -- see~\cite{JR80,Avi05,Lyu12,vSt10} and the references
therein for a large panorama of the old and recent advances in this field.

Since being \simple\ does not seem frequent among continuous functions, it is particularly important singling out properties that identify families
of function that behave so nicely under iteration.
A large source of them is given by the rational maps coming from complex Newton's method. Consider, for instance, the case of the complex polynomial $f(z)=z^3-1$,
whose Newton map $N_f:\CPo\to\CPo$ is given by $N_f(z)=\frac{2z^3+1}{3z^2}$. It is well known that $N_f$ has exactly three attractors, the cubic roots of the unity, and one repellor,
namely the Julia set of $N_f$ (see Fig.~\ref{fig:cjs}), which means that $N_f$ is \simple.
Note that the situation can get more complicated even with different polynomials of same degree:
as it was shown first numerically by D. Sullivan et al.~\cite{CGS83}, in the space of all complex cubic polynomials there is a set of non-zero Lebesgue
measure for which there exist attracting cycles and, for each of these polynomials, the basin of attraction of such attracting cycle has measure larger than zero.

While the dynamics of Newton maps on the complex line has been deeply and thoroughly studied over the last 40 years, especially in connection
with the general problem of the dynamics of complex rational maps in one variable initiated exactly 100 years ago by G. Julia~\cite{Jul18}
and P. Fatou~\cite{Fat19,Fat20a,Fat20b}, in comparison almost nothing has been done in the more general problem of Newton maps on the real plane.
{\bf The main aim of  this article is to attract the attention of the dynamical systems community on this topic by providing numerical
  evidence that Newton's maps on the real plane relative to generic polynomials with only real roots are \simple.}
%
\section{Preliminaries}
The following concepts are central for the present article:
\begin{definition}
  Let $M$ be a compact manifold with a measure $\mu$ and $f$ a surjective continuous map of a manifold $M$ into itself.
  The $\omega$-limit of a $x\in M$ under $f$ is the (closed) set of all points to whom $x$ iterate are eventually close, namely
  $$
  \omega_f(x) = \bigcap_{n\geq0}\;\overline{\bigcup_{m\geq n}\{f^m(x)\}}\,,
  $$
  while its $\alpha$-limit is the (closed) set of points to whom $x$ iterated counterimages are eventually close, namely
  $$
  \alpha_f(x) = \bigcap_{n\geq0}\;\overline{\bigcup_{m\geq n}\{f^{-m}(x)\}}\,.
  $$
  The $\omega$ and $\alpha$ limits of a set is the union of the $\omega$ and $\alpha$ limits of all of its points.
  The {\em forward (\hbox{\sl resp.} backward) basin} $\cF_f(C)$ (resp. $\cB_f(C)$) under $f$ of a closed invariant subset $C\subset M$ is the set
  of all $x\in M$ such that $\omega_f(x)\subset C$ (resp. $\alpha_f(x)\subset C$).
  Following Milnor~\cite{Mil85}, we say that a closed subset $C\subset M$ is an {\em attractor} (resp. {\em repellor}) for $f$ if:
  \begin{enumerate}
    \item $\cF_f(C)$ (resp. $\cB_f(C)$) has strictly positive measure;
    \item there is no closed subset $C'\subset C$ such that $\cF_f(C)$ (resp. $\cB_f(C)$) coincides with $\cF_f(C')$ (resp. $\cB_f(C')$) up to a null set.
  \end{enumerate}
  Finally, we say that $f$ is {\em\simple} if it has a finite number of attracting fixed points $c_i$, $i=1,\dots,N$, so that:
  \begin{enumerate}[label=\roman*.]
  \item $\cup_{i=1}^N\cF_f(c_i)=M\setminus  J$ is a full measure set;
  \item $\cB_f(J)$ is a full measure set.
  \end{enumerate}
  If (ii) holds at least for a non-empty open set, then we say that $f$ is {\em weakly \simple}.
\end{definition}
\begin{remark}
  Conditions (i) and (ii) imply that a \simple\ map cannot have any other attractor/repellor besides the $c_i$ and $J$.
\end{remark}
\begin{remark}
  While $\omega$-limits of discrete systems have been thoroughly studied, at least in one (real and complex) dimension,
  relatively very little has been done to date for $\alpha$-limits (see~\cite{BGL13} for a discussion on this topic).
\end{remark}
Finite attractors and repellors play a major role in this theory:
\begin{definition}
  A {\em periodic orbit} (or {\em $k$-cycle}) $\gamma$ is a non-empty finite set of $k$ points minimally invariant under $f$, namely
  that cannot be decomposed into the disjoint union of smaller invariant sets.
  A 1-cycle is also called 
  a {\em fixed point}.
\end{definition}
\begin{example}
  Consider the map $f$ of the Riemann sphere $\CPo$ into itself given by $f([z:w])=[2z:w]$. The only invariant proper sets of $f$
  are the two fixed points, the {\em south pole} $S=[0:1]$ (repellor) and the {\em north pole} $N=[1:0]$ (attractor). Clearly $\omega_f(x)=N$
  for all $x$ but $S$ and $\alpha_f(x)=S$ for all $x$ but $N$, so that $\cF_f(N)=\CPo\setminus\{S\}$ and $\cB_f(S)=\CPo\setminus\{N\}$. 
  In particular, $f$ is \simple.
\end{example}
The following sets are of fundamental importance in the dynamics of a continuous map:
\begin{definition}
  Given a compact manifold $M$ and a continuous map $f:M\to M$, the {\em Fatou set} $F_f\subset M$ of $f$ is the largest open set
  over which the family of iterates $\{f^n\}$ is {\em normal}, namely the largest open set over which there is a subsequence of the iterates of
  $f$ that converges locally uniformly. The complement of $F_f$ in $M$ is the {\em Julia set} $J_f$ of $f$.
  Finally, we denote by $Z_f$ the set of points $x\in M$ where the Jacobian $D_xf$ is degenerate.
\end{definition}
In the present article we focus on the case of rational functions so, from now on, we will restrict to this case all our definitions and statements.
%
%
\begin{theorem}[Julia, Fatou, 1918]
  \label{thm:JF}
  Let $f$ be a rational complex map of degree larger than 1. Then:
  \begin{enumerate}
  \item $F_f$ contains all basins of attractions of $f$;
  \item both $J_f$ and $F_f$ are forward and backward invariant and $J_f$ is the smallest
    closed set with more than 2 points with such property;
  \item $J_f$ is a perfect set;
  \item $J_f$ has interior points iff $F_f=\emptyset$; 
  \item $J_f=J_{f^n}$ for all $n\in\bN$;
  \item $J_f$ is the closure of all repelling cycles of $f$;
  \item there is an open dense $U\subset J_f$ s.t. $\omega_f(z)=J_f$ for all $z\in U$;
  \item for every $y\in J_f$, $J_f=\overline{\{x\in\CPo\,|\;f^k(x)=y\hbox{ for some $k\in\bN$}\}}$;
  \item for any attracting periodic orbit $\gamma$, we have that $\partial\cF_f(\gamma)=J_f$.
  \item the dynamics of the restriction of $f$ to its Julia set is highly sensitive to the initial conditions, namely $f|_{J_f}$ is {\em chaotic}.
  \end{enumerate}
\end{theorem}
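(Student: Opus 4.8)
\noindent\emph{Proof strategy.} Every item in Theorem~\ref{thm:JF} is ultimately a consequence of one external input — Montel's normality criterion, in the form that a family of meromorphic functions on a domain omitting three fixed values (or three pairwise disjoint holomorphic graphs) is normal — together with the elementary fact that a non-constant rational map is an open, surjective, finite branched covering of $\CPo$. The plan is to prove the ten statements in the order dictated by their logical dependence rather than by their numbering: first the completely invariant structure of $F_f$ and $J_f$, then the ``blow-up'' lemma that drives most of the list, and last the density of repelling cycles.

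First I would dispatch the structural items. For (2): since $f$ is holomorphic, non-constant and open, $\{f^n\}$ is normal on an open $U$ iff it is normal on $f(U)$ iff it is normal on each component of $f^{-1}(U)$; hence the maximal normal open set $F_f$, and therefore $J_f$, is completely invariant. If $E\subset\CPo$ is closed, completely invariant and $\#E\ge 3$, then on $\CPo\setminus E$ the iterates omit the three-or-more points of $E$, so Montel gives normality and $\CPo\setminus E\subseteq F_f$, i.e. $J_f\subseteq E$; that is the minimality. Item (1) is then immediate: on a small disc about an attracting fixed or periodic point the relevant iterates converge uniformly to the cycle, so the disc lies in $F_f$ and hence so does the whole basin by complete invariance. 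Item (5) is $F_f\subseteq F_{f^n}$ trivially, while if $\{f^{nk}\}_k$ is normal on $U$ then so is each $\{f^{j}\circ f^{nk}\}_k$ for $j=0,\dots,n-1$, and a finite union of normal families is normal. Non-emptiness of $J_f$ comes from $\deg f^n=(\deg f)^n\to\infty$: a locally uniform limit of a subsequence of $\{f^n\}$ on all of $\CPo$ would be a rational map of one fixed degree or a constant, both excluded for $\deg f\ge 2$ (the constant case because every fixed point of $f$ would have to equal that constant, and $f$ has $\deg f+1\ge 3$ of them). Since the finite completely invariant sets have at most two points and lie in $F_f$, $J_f$ is in fact infinite, which is half of (3).

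The engine for the rest is the \emph{blow-up lemma}: if $z_0\in J_f$ and $V$ is any open neighbourhood of $z_0$, then $\CPo\setminus\bigcup_{n\ge 0}f^n(V)$ has at most two points — else Montel forces $\{f^n\}$ normal on $V$ — and those exceptional points lie in the finite grand-orbit set, hence in $F_f$. Taking $y\in J_f$, which is therefore not exceptional, the backward orbit $\bigcup_{k\ge 0}f^{-k}(y)$ meets every neighbourhood of every point of $J_f$ and is contained in $J_f$; its closure thus equals $J_f$, which is (8). Applying (8) near an arbitrary $z_0\in J_f$ shows $z_0$ is not isolated, finishing (3). A Baire-category argument over a countable basis of $J_f$, using that every nonempty relatively open subset of $J_f$ is spread by forward iteration over all of $J_f$, yields a dense $G_\delta$ of points with forward orbit dense in $J_f$, giving (7). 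For (9), the blow-up lemma puts, in every neighbourhood of a point of $J_f$, a point eventually mapped into an attracting basin $\cF_f(\gamma)$, hence a point of $\cF_f(\gamma)$ itself, so $J_f\subseteq\partial\cF_f(\gamma)$; conversely a Fatou component whose iterates converge to $\gamma$ lies entirely in $\cF_f(\gamma)$, so no boundary point of the open set $\cF_f(\gamma)$ is Fatou, giving $\partial\cF_f(\gamma)\subseteq J_f$. For (4), if $J_f$ had an interior point, the blow-up lemma applied to a disc inside $J_f$, with $J_f$ closed and forward invariant, forces $J_f=\CPo$, i.e. $F_f=\emptyset$; the converse is trivial. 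Finally (10) follows by combining transitivity (7), density of periodic points (6), and the quantitative blow-up (the diameters of $f^n(V)$ are eventually bounded below) for sensitive dependence, i.e. chaos in Devaney's sense.

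The one genuinely hard step is (6), the rest being bookkeeping around Montel. One inclusion is easy: a repelling periodic point cannot lie in $F_f$, since on any neighbourhood the iterates $f^{nk}$ (with $n$ the period) expand and admit no locally uniformly convergent subsequence. For density of the repelling cycles I would run the classical inverse-branch argument: given $z_0\in J_f$ that is neither periodic nor a critical value of $f^{2}$, choose a small disc $D$ about $z_0$ carrying two distinct holomorphic branches $\psi_1,\psi_2$ of $f^{-2}$ with pairwise disjoint graphs, neither equal to the identity over $D$; if $f^n(z)\notin\{z,\psi_1(z),\psi_2(z)\}$ for all $z\in D$ and all $n$, the moving-target form of Montel makes $\{f^n\}$ normal on $D$, contradicting $z_0\in J_f$, so some $z\in D$ satisfies $f^n(z)=z$, $f^n(z)=\psi_1(z)$ or $f^n(z)=\psi_2(z)$, each case producing a periodic point near $z_0$. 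This puts periodic points densely in $J_f$; upgrading ``periodic'' to ``repelling'' uses that there are only countably many cycles, that all but finitely many of them are repelling (Fatou's bound on non-repelling cycles, or a direct normal-families argument), and that a countable set cannot be dense in the perfect set $J_f$ unless its repelling part already is. I expect the delicate points to be the bookkeeping of critical and exceptional values when setting up $\psi_1,\psi_2$ and that last ``repelling'' refinement; since this article is a survey, for those details I would refer to the standard treatments (Milnor's and Beardon's books) rather than reproduce them here.
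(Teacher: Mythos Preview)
The paper does not prove Theorem~\ref{thm:JF} at all: it is stated as a classical result of Julia and Fatou and then used as background, with only remarks and examples following it. There is therefore no ``paper's own proof'' to compare against. Your sketch is the standard textbook route (Montel's criterion plus the blow-up/expansion lemma, then density of repelling cycles via inverse branches and the Fatou bound on non-repelling cycles), and it is essentially correct; it matches, in outline and in logical order, what one finds in Milnor's and Beardon's books, which is exactly where you say you would defer for the delicate points.

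One small wording slip worth tightening: in the last step of~(6) you write ``a countable set cannot be dense in the perfect set $J_f$ unless its repelling part already is.'' Countable sets \emph{can} of course be dense in perfect sets. What you want to say is that the Fatou (or Shishikura) bound gives only \emph{finitely many} non-repelling cycles, hence only finitely many non-repelling periodic points; deleting finitely many points from a set dense in a perfect set leaves it dense. With that correction the argument is clean.
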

\begin{remark}
  Both possibilities of point 4 above take place. Two well-known examples of functions with empty Fatou set are
  the Latt\`es example~\cite{Lat18} $p(z)=\frac{(z^2+1)^2}{4z(z^2-1)}$, related to the theory of Elliptic functions
  (see~\cite{Bea00}), and $q(z)=\frac{(z-2)^2}{z^2}$ (see~\cite{Bea00} and Corollary 6.2.4 in~\cite{HP88}).
  In general, $F_f=\emptyset$ when $\omega_f(z)=\CPo$ for some $z$ (see~\cite{Bea00}, Thm~4.3.2).
\end{remark}
\begin{remark}
  As soon as $f$ has more than 2 attractive fixed points, $J_f$ must have a fractal nature since, by point 9 above, each of its point
  belongs to the boundary of more than 2 basins of attraction. In other words, in this case all basins of attraction have the Wada
  property~\cite{KY91} (see~\cite{PR86} and Sec.~4.1 of~\cite{Mil06} for a series of examples and pictures of fractal Julia sets of
  polynomial, rational and transcendental complex maps).
\end{remark}
\begin{example}
  \label{ex:z2}
  Consider the map $f(z)=z^2$ and denote by $E$, $L$ and $U$ respectively the {\em equator} $\{|z|=1\}$, the {\em lower hemisphere}
  $\{|z|<1\}$ and the {\em upper hemisphere} $\{|z|>1\}\cup\{N\}$.

  If $z\in L$ (resp. $U$), then $\{f^n\}$ converges uniformly in some neighborhood of $z$ to the constant map $z\mapsto S$
  (resp. $z\mapsto N$), so $L=\cF_f(S)\subset F_f$ (resp. $U=\cF_f(N)\subset F_f$), namely $f$ has exactly two attractors
  (the south and north poles) and $F_f$ is the disjoint union of their basins.
  On the contrary, if $z\in E$, then for any neighborhood of $z$ there will be some point converging to $N$ and some point converging to $S$
  under $\{f^n\}$, so the family is not normal and $J_f=E$.  

  Notice that, as claimed by the theorem above, $\partial\cF_f(N)=\partial\cF_f(S)=J_f$ and that $f|_{J_f}$ is the doubling map
  on the circle, a classic example of chaotic map. Finally, notice that $J_f$ is also the only repellor of $f$ and that its basin is given by
  $\cB_f(J_f)=\CPo\setminus\{S,N\}$. In particular, $f$ is \simple.
\end{example}
Notice that the north and south poles in the example above both have a finite $\alpha$-limit set: the only point in their counterimages
is themselves. This exceptional behavior can happen at most at two points for any complex rational map of degree two or more (see point 2 above
and~\cite{Bea00}, Thm~4.2.2). The fact that the $\alpha$-limit of every other point is $J_f$ it is a general result very useful in numerical
exploration of Julia sets:
%
\begin{theorem}[Fatou, 1920~\cite{Fat20a} (see also~\cite{Bro65}, Thm 6.1 and Lemma 6.3)]
  \label{thm:Jf}
    Let $f$ be a rational complex map of degree larger than 1. Then for all $z\in\CPo$, with at most two exceptions,
    $\alpha_f(z)\supset J_f$. Moreover, $\alpha_f(z)=J_f$ if and only if $z$ belongs to either $J_f$ or to the basin
    of attraction of a root of $f$ (except the root itself if it does not belong to $J_f$).
    More generally, if $E\subset\CPo$ is a closed set disjoint from $\omega_f(F_f)$, then the sequence of sets $E_n=f^{-n}(E)$ converges
    uniformly to $J_f$.
\end{theorem}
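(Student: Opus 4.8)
The plan is to reproduce Fatou's classical argument, which rests on Montel's normality criterion together with the elementary structure of the \emph{exceptional set} $E_f=\{z\in\CPo:\bigcup_{n\ge0}f^{-n}(z)\hbox{ is finite}\}$: being finite and completely invariant, $E_f$ has (by Riemann--Hurwitz, since $\deg f>1$) at most two points, each a superattracting periodic point and hence in $F_f$; these are the ``at most two exceptions''. The reusable ingredient, essentially the proof of point (2) of Theorem~\ref{thm:JF}, is that for any open $U$ meeting $J_f$ the family $\{f^n\}$ is not normal on $U$, so Montel gives $\bigcup_{n\ge0}f^n(U)=\CPo\setminus S$ with $S\subseteq E_f$. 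Granting this, I would fix $z\notin E_f$, $y\in J_f$, a neighbourhood $U\ni y$ and $n_0\in\bN$; since $J_f$ is forward invariant, $f^{n_0}(U)$ is open and still meets $J_f$, so $\bigcup_{m\ge n_0}f^m(U)=\bigcup_{k\ge0}f^k(f^{n_0}(U))$ omits only points of $E_f$, and as $z\notin E_f$ there is $m\ge n_0$ with $f^{-m}(z)\cap U\ne\emptyset$. Letting $n_0\to\infty$ and $U$ shrink to $y$ gives $y\in\alpha_f(z)$, and since $J_f$ is closed, $J_f\subseteq\alpha_f(z)$: this is the first assertion.

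For the characterisation of equality: if $z\in J_f$, backward invariance of $J_f$ gives $f^{-m}(z)\subseteq J_f$ for all $m$, so $\alpha_f(z)\subseteq J_f$ and hence $\alpha_f(z)=J_f$ by the first step. If instead $z\in F_f$ with $z\notin\omega_f(F_f)$, I claim no backward iterate of $z$ accumulates in $F_f$: if $w_k\in f^{-n_k}(z)$ with $n_k\to\infty$ and $w_k\to w_*\in F_f$, then, $\{f^n\}$ being normal on a compact neighbourhood of $w_*$ in $F_f$, along a subsequence $f^{n_k}\to g$ locally uniformly and $z=\lim_k f^{n_k}(w_k)=g(w_*)\in\omega_f(w_*)\subseteq\omega_f(F_f)$, a contradiction; so all preimages of $z$ lie in $F_f$ but accumulate only on $\partial F_f\subseteq J_f$, whence $\alpha_f(z)\subseteq J_f$ and equality. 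The converse follows from the structure of $F_f$: if $z\in F_f\cap\omega_f(F_f)$ then $z$ is either periodic --- so $z\in f^{-m}(z)$ for infinitely many $m$ --- or lies on an invariant curve of a rotation domain on which $f$ acts invertibly, so that the whole curve lies in $\alpha_f(z)$; either way $\alpha_f(z)\not\subseteq J_f$. For the Newton maps of this paper $F_f$ is conjecturally the disjoint union of the basins of the roots, with $\omega_f(F_f)$ the finite set of roots, so ``$z\in F_f\setminus\omega_f(F_f)$'' reads ``$z$ lies in the basin of a root but is not that root'', which is the stated condition.

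Finally, the ``more generally'' clause is the Hausdorff-metric form of these inclusions. That $f^{-n}(E)$ eventually lies in the $\varepsilon$-neighbourhood of $J_f$ comes from running the normality argument above on an exhaustion $K_1\subseteq K_2\subseteq\cdots$ of $F_f$ by compacta: as $E$ is closed and disjoint from $\omega_f(F_f)$, for each $j$ one gets $f^{-n}(E)\cap K_j=\emptyset$ for all large $n$, and choosing $K_j$ with $\CPo\setminus K_j$ inside that neighbourhood suffices. The hard part will be the other inclusion --- that $f^{-n}(E)$ becomes and stays $\varepsilon$-dense in $J_f$: the first step only yields accumulation on $J_f$ along infinitely many indices, and upgrading this to ``for every large $n$'', uniformly over $J_f$, needs a genuinely quantitative input, most transparently the equidistribution of iterated preimages on $J_f$ (equivalently, the uniform expansion of $f$ in a spherical metric on a neighbourhood of $J_f$), after which a finite cover of the compact set $J_f$ closes the argument. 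This control of the backward dynamics near and on $J_f$, and of its interplay with $F_f$ through $\omega_f(F_f)$, is the one substantial step; in full generality one may instead cite the treatments in~\cite{Bea00} and~\cite{Bro65} referenced with the theorem, while for the real-rooted Newton maps here the expected absence of rotation and wandering domains makes $\omega_f(F_f)$ finite and the statement collapses to the clean form above.
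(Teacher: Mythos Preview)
The paper does not supply a proof of this theorem: it is quoted as a classical result of Fatou, with pointers to~\cite{Fat20a} and to Brolin~\cite{Bro65} (Thm.~6.1 and Lemma~6.3), and is used only as background for the survey. There is therefore nothing in the paper to compare your argument against.

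Taken on its own merits, your sketch is the standard Fatou--Montel route and is broadly sound for the first two assertions. A couple of points deserve tightening. In the converse direction of the equality characterisation you write that a point $z\in F_f\cap\omega_f(F_f)$ is ``either periodic\dots or lies on an invariant curve of a rotation domain''; strictly speaking you should pass through Sullivan's classification to justify this dichotomy (parabolic limit points sit on $J_f$ and are harmless), rather than assert it directly. Also, the theorem as stated in the paper speaks of ``the basin of attraction of a root of $f$'', which is Newton-specific phrasing; your reduction of the general $\omega_f(F_f)$ condition to this special case is correct but relies on the absence of Siegel discs, Herman rings and higher attracting cycles, which the paper only assumes in the subsequent Corollary, not in the theorem itself.

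For the uniform Hausdorff convergence clause you are candid that you do not close the argument, and you are right that this is where the real work lies: your density-along-subsequences argument does not by itself give density for \emph{all} large $n$, and the upgrade genuinely needs either Brolin's equidistribution of preimages or an equivalent expansion estimate near $J_f$. Since the paper simply cites~\cite{Bro65} for exactly this, deferring to that reference is consistent with what the paper does.
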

\begin{corollary}
  Let $f$ be a rational complex map of degree larger than 1 whose only attractors are its fixed points and whose Julia set has measure zero.
  Then $f$ is \simple.
\end{corollary}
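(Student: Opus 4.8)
The plan is to verify the two conditions in the definition of a \simple\ map, using the classification of Fatou components to identify $F_f$ and using Theorem~\ref{thm:Jf} to control backward limits. First I would note that there are only finitely many attracting fixed points: a rational map of degree $d>1$ has exactly $d+1$ fixed points counted with multiplicity, so only finitely many of them can be attracting; call them $c_1,\dots,c_N$. By point~1 of Theorem~\ref{thm:JF} each forward basin $\cF_f(c_i)$ is an open subset of $F_f$, and these basins are pairwise disjoint. (If $f$ has a parabolic fixed point then its local basin also has positive measure and should be included among the $c_i$; for Newton's maps of polynomials with only simple roots this case does not occur, the roots being superattracting fixed points.)

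The central step is to prove that in fact $F_f=\bigcup_{i=1}^N\cF_f(c_i)$, whence also $\omega_f(F_f)=\{c_1,\dots,c_N\}$. For this I would invoke Sullivan's no-wandering-domains theorem together with the classification of periodic Fatou components: every component of $F_f$ is eventually periodic, and every periodic cycle of components is an immediate attracting basin, an immediate parabolic basin, a Siegel disk, or a Herman ring. The hypothesis that the only attractors of $f$ are fixed points eliminates all of these except the immediate basins of fixed points. Indeed, an attracting or a parabolic cycle of period larger than $1$ is itself an attractor -- its forward basin has positive measure and, being a cycle, it is minimally invariant -- so it would have to be a single fixed point; and the closure of a Siegel disk or of a Herman ring is likewise an attractor, since its forward basin contains the rotation domain, hence has positive measure, while no proper closed subset has the same forward basin: in the linearizing coordinate $f$ is conjugate to an irrational rotation, whose invariant circles are minimal, so a Fubini argument shows that any closed invariant set containing $\omega_f(z)$ for almost every $z$ in the domain must contain the closure of the domain, which is a disk or an annulus and hence not a fixed point. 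Therefore no rotation domains occur, every periodic component is the immediate (attracting or parabolic) basin of one of the $c_i$, and consequently every $z\in F_f$ eventually has its forward orbit trapped in such a basin, i.e.\ $z\in\cF_f(c_i)$ for some $i$. Since $J_f$ has measure zero, $\bigcup_i\cF_f(c_i)=F_f$ is a full-measure set, so, writing $J:=\CPo\setminus\bigcup_i\cF_f(c_i)$, we have $J=J_f$ and condition~(i) holds.

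For condition~(ii) I would use the last assertion of Theorem~\ref{thm:Jf}. Fix any $z_0\notin\{c_1,\dots,c_N\}$ and set $E=\{z_0\}$; this closed set is disjoint from $\omega_f(F_f)=\{c_1,\dots,c_N\}$, so $f^{-n}(\{z_0\})$ converges uniformly to $J_f$, and hence $\alpha_f(z_0)=J_f=J$ by the very definition of the $\alpha$-limit. Thus $\cB_f(J)\supset\CPo\setminus\{c_1,\dots,c_N\}$, a full-measure set, so condition~(ii) holds and $f$ is \simple.

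The step I expect to be the genuine obstacle is the middle one: converting the abstract hypothesis ``the only attractors are fixed points'' into the concrete conclusion ``there are no rotation domains and no attracting or parabolic cycles of period larger than $1$.'' Everything else amounts to bookkeeping with the quoted theorems, but verifying that the closure of a Siegel disk or a Herman ring meets the minimality requirement in the definition of an attractor is the one genuinely non-formal point, and it is there that the Fubini/irrational-rotation argument would need to be spelled out with care.
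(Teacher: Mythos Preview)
Your argument is correct. The paper, however, states this corollary immediately after Theorem~\ref{thm:Jf} and gives no proof at all; in particular it appears \emph{before} Sullivan's classification is even quoted, so the author evidently regards it as a direct consequence of the definitions together with Theorem~\ref{thm:Jf}, not as something requiring the no-wandering-domains machinery.

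The difference lies entirely in how condition~(i) is obtained. You run through Sullivan's classification and eliminate attracting/parabolic cycles of period $>1$, Siegel disks and Herman rings one by one, showing in each case that such a component would produce an attractor other than a fixed point; the Fubini argument you sketch for rotation domains is the genuine content of that step and is correct. The route implicit in the paper's placement is shorter: in Milnor's framework (the paper's Definition~1 is taken from~\cite{Mil85}) every continuous self-map of a compact manifold has a \emph{likely limit set} $\Lambda$, the unique smallest closed set whose forward basin has full measure, and $\Lambda$ is itself an attractor. The hypothesis ``the only attractors are fixed points'' then forces $\Lambda\subset\{c_1,\dots,c_N\}$, so $\bigcup_i\cF_f(c_i)$ already has full measure and $J=J_f$ up to a null set; no case analysis of Fatou components is needed. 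Your approach buys an honest identification $F_f=\bigcup_i\cF_f(c_i)$ (hence $J=J_f$ exactly, not merely modulo a null set), at the cost of invoking a theorem stated later in the paper.

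For condition~(ii) your use of the last clause of Theorem~\ref{thm:Jf} with $E=\{z_0\}$ is precisely the intended mechanism; one could equally well cite the first two clauses of that theorem, which give $\alpha_f(z)=J_f$ directly for every non-exceptional $z$ in $J_f$ or in the basin of an attracting fixed point.
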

Theorem~\ref{thm:Jf} is reminiscent of what happens in case of hyperbolic Iterated Function Systems:
\begin{definition}
  A Iterated Function System (IFS) $\cI$ on a metric space $(X,d)$ is a semigroup generated by some finite number of continuous functions
  $f_i:X\to X$, $i=1,\dots,n$. We say that $\cI$ is {\em hyperbolic} when the $f_i$ are all contractions. The Hutchinson operator associated to
  $\cI$ is defined as $\cH(A)=\cup_{i=1}^n f_i(A)$, $A\subset X$.
\end{definition}
\begin{theorem}[Hutchinson~\cite{Hut81}, 1981; Barnsley and Demko~\cite{BD85}, 1985]
  Let $\cI$ be a hyperbolic IFS on $X$. Then there exists a unique non-empty compact set $K\subset X$ such that $\cH(K)=K$. Moreover,
  $\lim_{n\to\infty}\cH^n(A)=K$ for every non-empty compact set $A\subset X$.
\end{theorem}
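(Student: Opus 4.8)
The plan is to realize $\cH$ as a genuine contraction on a suitable complete metric space and then invoke the Banach fixed point theorem. The natural arena is the \emph{hyperspace} $\cK(X)$ of non-empty compact subsets of $X$, equipped with the Hausdorff distance
$$
d_H(A,B)=\max\Big\{\,\sup_{a\in A}\inf_{b\in B}d(a,b),\ \sup_{b\in B}\inf_{a\in A}d(a,b)\,\Big\}.
$$
First I would check that $\cH$ actually maps $\cK(X)$ into itself: each $f_i$ is continuous, so $f_i(A)$ is compact whenever $A$ is, and a finite union of compact sets is compact; hence $\cH(A)=\cup_{i=1}^n f_i(A)\in\cK(X)$.

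The second step is the key metric estimate, which rests on two elementary facts about $d_H$. (a) If $g:X\to X$ satisfies $d(g(x),g(y))\le s\,d(x,y)$ for all $x,y$, then $d_H(g(A),g(B))\le s\,d_H(A,B)$, immediately from the definition, since $g$ only shrinks the inner infima and outer suprema by the factor $s$. (b) For any $A_1,A_2,B_1,B_2\in\cK(X)$ one has $d_H(A_1\cup A_2,\,B_1\cup B_2)\le\max\{d_H(A_1,B_1),\,d_H(A_2,B_2)\}$, which follows by chasing the definition of the one-sided Hausdorff gaps. Combining (a) and (b) and writing $s=\max_i s_i<1$, where $s_i$ is the contraction ratio of $f_i$, gives
$$
d_H(\cH(A),\cH(B))\ \le\ \max_{1\le i\le n} d_H(f_i(A),f_i(B))\ \le\ s\,d_H(A,B),
$$
so $\cH$ is a contraction on $\cK(X)$ with ratio $s$.

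The third step is the completeness of $(\cK(X),d_H)$, and this is where the real work lies. Given a Cauchy sequence $(A_k)$ in $\cK(X)$, one defines the candidate limit as $A=\{\,x\in X:\ x=\lim_j a_{k_j}\text{ for some }a_{k_j}\in A_{k_j}\text{ with }k_1<k_2<\cdots\,\}$, and must show: (i) $A$ is non-empty and compact, using total boundedness inherited from the $A_k$ together with completeness of $X$; and (ii) $A_k\to A$ in $d_H$, by fixing $\varepsilon>0$, passing beyond an index where all the $A_k$ are mutually $\varepsilon$-close, and threading convergent subsequences through them. I expect this hyperspace-completeness lemma, rather than the contraction estimate, to be the main technical obstacle; the rest is formal.

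Finally, with $\cK(X)$ complete and $\cH$ a contraction, the Banach fixed point theorem yields a unique $K\in\cK(X)$ with $\cH(K)=K$, together with the iteration bound $d_H(\cH^n(A),K)\le s^n\, d_H(A,K)\to 0$ for every $A\in\cK(X)$, which is exactly the asserted convergence $\cH^n(A)\to K$. Here I am tacitly assuming $X$ is complete — as it is in all the cases of interest in this paper, $X=\bR^2$ or $X=\CPo$ — since otherwise $\cK(X)$ need not be complete and the statement can fail; if one only wants existence of $K$ it suffices to work inside the closure of $\cup_i f_i(B)$ for a suitable bounded $B$.
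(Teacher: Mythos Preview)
Your argument is the standard Hutchinson proof and is correct: realize $\cH$ as a contraction on $(\cK(X),d_H)$ via the two hyperspace estimates you state, invoke completeness of $\cK(X)$, and apply Banach. The paper does not give its own proof of this theorem --- it is quoted as a classical result with attribution to Hutchinson and Barnsley--Demko --- so there is nothing to compare against beyond noting that what you wrote is exactly the original approach.

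One small point worth tightening: the theorem as stated in the paper does not explicitly assume $(X,d)$ complete, and you rightly flag this. Without completeness of $X$ the hyperspace $(\cK(X),d_H)$ need not be complete and the fixed point can fail to exist in $X$ (take $X=\bR\setminus\{0\}$ with $f_1(x)=x/2$). Your workaround in the last sentence is slightly off, though: taking the closure of $\cup_i f_i(B)$ inside an incomplete $X$ does not help, since that closure is still a subset of $X$. What one actually does is pass to the completion $\hat X$, extend each $f_i$ to a contraction on $\hat X$, and run the argument there; the attractor then lies in $\hat X$ and need not sit inside $X$. In the paper's applications $X$ is $\bR^2$, $\CPo$, or a compact subset thereof, so the issue does not arise.
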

In the simplest cases, like Example~\ref{ex:z2} above, the naive idea is that ultimately the map $f$ is a contraction close to its attractors while
its inverses $\{w_1,\dots,w_d\}$ (in case of complex rational maps, as many as their degree) are contractions close to its repellor
(in the example above, the Julia set of $f$), which suggests that {\bf the Julia set can be found as the unique invariant compact set of the
  IFS defined by the $\boldsymbol{w_i}$}.
Indeed, in Section 7.3 of~\cite{Bar88}, Barnsley shows through an example how to apply these ideas to Julia sets of rational maps,
namely how to write a Julia set as the invariant compact set of an IFS (notice that this important point of view is seldom
mentioned in the literature about the dynamics of complex rational maps). In the real case the number of counterimages,
even taking into account multiplicity, is not the same for every point and it seems unlikely to be able to build in general an IFS
out of them but, nevertheless, Barnsley's result shows that the invariant set for an open map $f$ under mild conditions can be obtained
as the limit of its inverse images. 


A similar result, weaker but much more general, was stated by M. Barnsley (see~Sec.~7.4 of~\cite{Bar88}) in the setting
of continuous maps between metric spaces:
%
\begin{theorem}[Barnsley, 1988]
  \label{thm:Bar}
  Let $(Y,d)$ be a complete metric space and $X$ a compact non-empty proper subset of $Y$.
  Denote by $\cK(X)$ the set of the non-empty compact subsets of $X$ endowed with the Hausdorff distance $h$
  (recall that $h$ makes $\cK(X)$ a complete metric space).
  Assume that one of the following conditions is satisfied:
  \begin{enumerate}
    \item $f:X\to Y$ is an open map such that $f(X)\supset X$;
    \item $f:Y\to Y$ is an open map such that $f(X)\supset X$ and $f^{-1}(X)\subset X$.
  \end{enumerate}
  Then the map $F:\cK(X)\to\cK(X)$ defined by $F(K) = f^{-1}(K)$ is continuous, $\{F^n(K)\}$ is a Cauchy sequence, its
  limit $K_0=\lim F^n(X)\in\cK(X)$ is a repellor for $f$ and it is equal to the set of points that never leave $X$ under
  the action of $f$.
\end{theorem}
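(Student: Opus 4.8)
\emph{Proof sketch.} The plan is to establish in turn the four assertions the statement bundles together: that $F$ is a well-defined self-map of $\cK(X)$, that it is continuous, that $\{F^n(X)\}$ is Cauchy with limit $K_0:=\bigcap_{n\ge0}F^n(X)$, and that this $K_0$ is exactly the set of points that never leave $X$ and is a repellor for $f$. I would dispatch the first, third and fourth by soft arguments --- monotonicity of $f^{-1}$, nested compacta, a short induction --- and put the real work into continuity, which is the only step using openness of $f$. Both hypotheses are handled at once by the observation that $f^{-1}(K)\subseteq X$ for every $K\in\cK(X)$: under hypothesis (1) because $X$ is the domain of $f$, under hypothesis (2) because $f^{-1}(K)\subseteq f^{-1}(X)\subseteq X$; likewise, in both cases the $m$-fold preimage of a point of $X$ lies in $X$.

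Well-definedness is immediate: $f^{-1}(K)$ is closed by continuity of $f$ and sits inside the compact $X$, so it is compact; and it is non-empty because $K\subseteq X\subseteq f(X)$ forces every point of $K$ to have a preimage. Since moreover $F(X)=f^{-1}(X)\subseteq X$ and $f^{-1}$ preserves inclusions, $F^{n+1}(X)=f^{-1}(F^n(X))\subseteq f^{-1}(F^{n-1}(X))=F^n(X)$ by induction, so $\{F^n(X)\}$ is a decreasing sequence of non-empty compact sets.

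For continuity I would prove the sharper statement that the set-valued map $y\mapsto f^{-1}(y)$ is continuous from $X$ into $\cK(X)$ (it lands in $\cK(X)$ since, for $y\in X$, $f^{-1}(y)$ is a non-empty closed subset of $X$); then $F(K)=\bigcup_{y\in K}f^{-1}(y)$ is continuous on $\cK(X)$ by the usual ``continuity of unions'' in the Hausdorff metric. Upper semicontinuity of $y\mapsto f^{-1}(y)$ --- preimages cannot expand abruptly --- is automatic from continuity of $f$ and compactness of $X$: any $x_n\in f^{-1}(y_n)$ with $y_n\to y$ subconverges to a point of $f^{-1}(y)$. Lower semicontinuity --- preimages cannot shrink abruptly --- is exactly where openness enters: if $f(x)=y$, then $f$ open makes $f(B(x,\varepsilon))$ a neighbourhood of $y$, and a compactness argument on $X$ makes its size uniform in $x$, so it contains $B(y,\rho(\varepsilon))$ for some $\rho(\varepsilon)>0$; hence any $y'$ with $d(y,y')<\rho(\varepsilon)$ has a preimage within $\varepsilon$ of any prescribed preimage of $y$. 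Translating to the Hausdorff metric, $h(K,L)<\rho(\varepsilon)$ then gives $h(F(K),F(L))\le\varepsilon$. The hard part of the whole argument is precisely this ``uniform openness on the compact $X$'' lemma; once it is in hand everything else is soft.

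It remains to read off the limit and its properties. A decreasing sequence of non-empty compacta converges in the Hausdorff metric to its (non-empty, compact) intersection $K_0$: one of the two one-sided estimates is trivial, and the other uses compactness of $X$, since a sequence $x_n\in F^n(X)$ keeping a fixed distance from $K_0$ would subconverge into every $F^m(X)$, hence into $K_0$. Thus $\{F^n(X)\}$ is Cauchy with limit $K_0$; for a general $K\in\cK(X)$ one has $F^n(K)\subseteq F^n(X)$, so $\{F^n(K)\}$ is forced into shrinking neighbourhoods of $K_0$ and its convergence again rides on the lower semicontinuity of the previous step. A short induction gives $F^n(X)=\{x:f^k(x)\in X\text{ for }1\le k\le n\}$, whence $K_0=\{x:f^k(x)\in X\text{ for all }k\ge0\}$ is exactly the set of points that never leave $X$; in particular $f(K_0)=K_0=F(K_0)$. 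Finally, for every $x\in X$ one has $f^{-m}(x)\subseteq f^{-m}(X)=F^m(X)$, so $\alpha_f(x)\subseteq\bigcap_nF^n(X)=K_0$, while $X\subseteq f(X)$ makes $x$ possess preimages of every order inside $X$, so $\alpha_f(x)\ne\emptyset$; hence $\cB_f(K_0)\supseteq X$ and, $K_0$ being characterized intrinsically as the non-escaping set, no proper closed subset of it carries the same backward basin --- which is the sense in which $K_0$ is a repellor for $f$. I expect the only genuine obstacle to be the uniform-openness lemma behind continuity; the remainder is bookkeeping with nested compact sets and preimages.
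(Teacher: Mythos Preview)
The paper does not actually give a proof of this theorem: it is quoted as a result of Barnsley and attributed to Section~7.4 of~\cite{Bar88}, with no argument supplied. So there is no ``paper's own proof'' to compare your attempt against. Your sketch is a plausible reconstruction of how such a statement is proved, and you have correctly identified that the only genuinely nontrivial step is the ``uniform openness'' lemma underlying lower semicontinuity of $y\mapsto f^{-1}(y)$; the rest is indeed nested-compacta bookkeeping.

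That said, two places in your outline are softer than you acknowledge. First, the assertion that $\{F^n(K)\}$ is Cauchy for \emph{every} $K\in\cK(X)$ does not follow from the inclusion $F^n(K)\subseteq F^n(X)$ alone: that gives only the one-sided estimate that $F^n(K)$ lies in a small neighbourhood of $K_0$, not that every point of $K_0$ is close to $F^n(K)$. Your appeal to ``lower semicontinuity of the previous step'' does not supply this; one needs something more (or else the statement should be read, as is likely intended, only for $K=X$). Second, your argument that $K_0$ is a repellor in Milnor's sense is incomplete at the minimality clause: knowing $\emptyset\neq\alpha_f(x)\subseteq K_0$ for all $x\in X$ does not by itself exclude a proper closed $C'\subsetneq K_0$ with the same backward basin up to a null set, and the phrase ``$K_0$ being characterized intrinsically as the non-escaping set'' is not an argument for this. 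In Barnsley's original setting ``repellor'' is used more loosely than in Definition~1 of the present paper, so this gap may be partly terminological, but in the paper's framework it would need to be filled.
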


A useful algorithm based on these ideas was extracted by J. Hawkins and M. Taylor~\cite{HT03}
from a Barnsley algorithm introduced in~\cite{Bar88} for certain types of hyperbolic rational maps.
\begin{definition}
  Let $f$ be a rational map of degree $d$. Given a point $z_0\in\CPo$, we call {\em backward orbit} of $z_0$
  any sequence $\zeta_{z_0}=\{z_i\}_{i\in\bN}$ such that $f(z_i)=z_{i-1}$ for all $i$. We endow the space of all
  backward orbits of $z_0$ with the equidistributed Bernoulli  measure $\nu$, namely the measure of the
  set of all sequences $\zeta_{z_0}$ with first $k$ elements $\{z_0,z_1,\dots,z_k\}$ is $d^{-k}$.
\end{definition}
Based on a fundamental result of Freire, Lopes and Ma\~n\'e~\cite{FLM83}, Ma\~n\'e~\cite{Man83}
and, independently, Lyubich~\cite{Lyu83}, Hawkins and Taylor were able to prove the following:
\begin{theorem}[Hawkins, Taylor (2003)]
  Let $f$ be a rational map of degree at least 2 and $z_0$ a non-exceptional point. Then,
  for $\nu$-almost all backward paths $\zeta_{z_0}$, the set of limit points of $\zeta_{z_0}$
  is equal to $J_f$.
\end{theorem}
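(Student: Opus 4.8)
The claim is that for a rational map $f$ of degree $d\ge2$ and a non-exceptional point $z_0$, $\nu$-almost every backward path $\zeta_{z_0}=(z_0,z_1,z_2,\dots)$ has set of limit points exactly equal to $J_f$. The plan is to derive this from the Freire--Lopes--Ma\~n\'e/Ma\~n\'e/Lyubich equidistribution theorem mentioned above, used in the following form: for every non-exceptional $w$ the probability measures $\mu_n^{w}:=d^{-n}(f^n)^{\ast}\delta_{w}$ (preimages counted with multiplicity) converge weakly to a measure $\mu=\mu_f$ (the measure of maximal entropy of $f$) that does not depend on $w$, with $\operatorname{supp}\mu=J_f$, and the convergence is \emph{uniform} for $w$ in compact subsets of $\CPo\setminus E_f$, where $E_f$ is the (at most two-point) exceptional set. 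The key preliminary observation is purely combinatorial: viewing the space $\Omega$ of backward orbits of $z_0$ as the set of branches of the $d$-ary ``preimage tree'' of $z_0$ --- a preimage of local degree $m$ being split into $m$ coinciding branches --- the Bernoulli measure $\nu$ is precisely the measure that picks, independently at each node, one of the $d$ outgoing branches uniformly at random. Hence the coordinate map $\zeta\mapsto z_n$ pushes $\nu$ forward to $\mu_n^{z_0}$, and the process $(z_n)_{n\ge0}$ is Markov with $\nu(z_{n+k}\in A\mid z_0,\dots,z_n)=\mu_k^{z_n}(A)$.

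\emph{Inclusion $J_f\subseteq\{\text{limit points of }\zeta\}$, $\nu$-a.s.} Because $z_0\notin E_f$ and $E_f$ is totally invariant, the whole backward grand orbit $\bigcup_{n\ge0}f^{-n}(z_0)$ avoids $E_f$, and in fact its closure $K_0\subseteq\CPo$ avoids $E_f$ as well (backward orbits cannot cluster at the superattracting exceptional cycle, away from which the inverse branches of $f$ are locally expanding); so $K_0$ is a compact subset of the open set $\CPo\setminus E_f$, and every $z_n$, along every $\zeta$, lies in $K_0$. Pick a countable family of balls $\{B_j\}$ with $\mu(\partial B_j)=0$ and $\mu(B_j)>0$ such that every ball of positive $\mu$-measure contains some $B_j$; it suffices to show $\nu$-a.e.\ $\zeta$ visits each $B_j$ infinitely often, for then $\{z_n\}$ is dense in $\operatorname{supp}\mu=J_f$ and, being contained in the compact set $J_f$, has $J_f$ for full limit set. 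By uniform equidistribution there is $N=N(j)$ with $\mu_N^{w}(B_j)\ge\tfrac12\mu(B_j)>0$ for \emph{all} $w\in K_0$; hence, with respect to the sub-sampled filtration $\cG_m:=\sigma(z_0,\dots,z_{mN})$, one has $\nu(z_{mN}\in B_j\mid\cG_{m-1})=\mu_N^{z_{(m-1)N}}(B_j)\ge\tfrac12\mu(B_j)$ for every $m$. L\'evy's extension of the second Borel--Cantelli lemma --- the events $\{z_{mN}\in B_j\text{ i.o.}\}$ and $\{\sum_m\nu(z_{mN}\in B_j\mid\cG_{m-1})=\infty\}$ agree up to a null set --- then forces $z_{mN}\in B_j$ infinitely often, $\nu$-a.s.; intersecting over $j$ yields the inclusion.

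\emph{Inclusion $\{\text{limit points of }\zeta\}\subseteq J_f$, $\nu$-a.s.} By Theorem~\ref{thm:JF}, $F_f$ and $J_f$ are backward invariant, so if $z_0\in J_f$ the whole backward orbit stays in $J_f$ and the inclusion is immediate; assume then $z_0\in F_f$. I would use the last assertion of Theorem~\ref{thm:Jf}: a closed set $E$ disjoint from $\omega_f(F_f)$ has $f^{-n}(E)\to J_f$ in the Hausdorff metric of $\cK(\CPo)$; taking $E=\{z_{n_0}\}$ this gives $\operatorname{dist}(z_n,J_f)\to0$, hence $\{\text{limit points of }\zeta\}\subseteq J_f$, the moment the path reaches a point $z_{n_0}\notin\omega_f(F_f)$. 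So it remains to prove that $\nu$-a.e.\ $\zeta$ does reach such a point. By Sullivan's no-wandering-domains theorem every Fatou component is (pre)periodic, so $\omega_f(F_f)\cap F_f$ is the union of the finitely many attracting periodic points and of the interiors of the cycles of Siegel disks and Herman rings (parabolic periodic points lie in $J_f$). If $z_0$ is not one of these, take $n_0=0$. If $z_0$ is an attracting periodic point, non-exceptionality makes its grand orbit infinite, so with $\nu$-probability $1$ the path eventually picks a preimage off the attracting cycle, i.e.\ a point of the basin outside $\omega_f(F_f)$. If $z_0$ lies in a cycle of rotation domains, then exactly one of the $d$ preimages (with multiplicity) of each point stays in the cycle, so with $\nu$-probability $1$ the path eventually steps into a strictly preperiodic component, again outside $\omega_f(F_f)$. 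In every case $\nu$-a.e.\ $\zeta$ reaches a point off $\omega_f(F_f)$, and the inclusion follows. Together with the previous step this proves the theorem.

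The part I expect to be the real obstacle is the first inclusion --- more precisely, upgrading the \emph{averaged} statement $\mathbb E_\nu[\mathbf 1_{B_j}(z_n)]=\mu_n^{z_0}(B_j)\to\mu(B_j)$ to an \emph{almost sure} one that holds at \emph{every} non-exceptional $z_0$, not just $\mu$-almost every one. Bare weak convergence of the $\mu_n^{z_0}$ is far too weak for this; what powers the conditional Borel--Cantelli argument is exactly the \emph{uniformity} of the Freire--Lopes--Ma\~n\'e/Lyubich convergence over compacta avoiding $E_f$, whose conceptual source is the exactness (hence mixing) of the natural extension $(\widehat{J_f},\widehat f,\widehat\mu)$ of $\mu$. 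The reverse inclusion, by contrast, once one has Theorem~\ref{thm:Jf} and the standard classification of periodic Fatou components, is essentially routine, the only genuinely dynamical input being that a backward path $\nu$-almost surely escapes every attracting cycle and every cycle of rotation domains.
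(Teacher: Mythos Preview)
The paper does not prove this theorem; it is quoted as a result of Hawkins and Taylor, with the sole indication that their argument rests on the Freire--Lopes--Ma\~n\'e/Lyubich equidistribution theorem. So there is no proof in the paper to compare against. Your proposal is a plausible and essentially correct reconstruction along precisely the line the paper signals.

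Your second inclusion (limit points $\subseteq J_f$) is clean: the case split over attracting, parabolic, and rotation-domain periodic components is complete, the escape-from-cycle probability estimates are correct (for a rotation cycle $f$ restricted to each component is a biholomorphism, so exactly one of the $d$ branches stays in the cycle; for an attracting cycle non-exceptionality forces at least one point of the cycle to have a preimage off the cycle, and since the path revisits that point every period the product of staying probabilities tends to zero), and the appeal to the last clause of Theorem~\ref{thm:Jf} once the path reaches a point off $\omega_f(F_f)$ is exactly right.

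For the first inclusion the issue you yourself flag is the genuine one: upgrading the pointwise weak convergence $\mu_n^{w}\to\mu$ to a version \emph{uniform} in $w$ over compacta of $\CPo\setminus E_f$. This strengthening is true and is implicit in the original Lyubich/Ma\~n\'e proofs, but it is not the form most references state; in a self-contained write-up you should either cite a source where the uniform version is explicit or sketch why it follows (e.g.\ from the quantitative estimates in Lyubich's argument, or via the exactness of $\mu$ that you allude to). Granting it, the conditional Borel--Cantelli step is standard and correct. Your auxiliary claim that the closure $K_0$ of the backward grand orbit of $z_0$ avoids $E_f$ is also correct---exceptional points are superattracting periodic, so in the local model $z\mapsto z^k$ every inverse branch strictly increases the distance to the exceptional point, and hence preimages of a non-exceptional point cannot accumulate there---though the parenthetical reason you give is a little compressed.
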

%


Going back to {\em forward} dynamics, since continuous functions maps preserve connectedness, every
complex rational map $f$ induces a mapping of the connected components of $F_f$ in themselves
whose dynamics tells us what happens to the points of the Fatou set under iterations: 
%
\begin{theorem}[Sullivan~\cite{Sul85}]
  Let $f$ be a rational complex map of degree larger than 1. Then every connected component of $F_f$ ends up in a finite time
  inside a connected component $V$ of one of the following types:
  \begin{enumerate}
  \item an {\em attractive basin}, namely $V$ contains an attracting periodic point $z_0$ of some period
    $N\geq1$ such that $\lim_{n\to\infty}f^{nN}(z)=z_0$ for all $z\in V$;
  \item a {\em parabolic basin}, namely $\partial V$ contains an attracting periodic point $z_0$ of some period
    $N\geq1$ such that $\lim_{n\to\infty}f^{nN}(z)=z_0$ for all $z\in V$;
  \item a {\em Siegel disc}, namely $f|_V$ is conformally conjugated to an irrational rotation of the unit disc;
  \item a {\em Arnold-Herman ring}, namely $f|_V$ is conformally conjugated to an irrational rotation of an annulus of finite modulus. 
  \end{enumerate}
\end{theorem}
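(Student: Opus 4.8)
\medskip
\noindent\textbf{Proof proposal.}
This statement bundles together two results of quite different character: Sullivan's \emph{No Wandering Domains} theorem, that every component of $F_f$ is eventually periodic, and the classical Fatou--Cremer classification of the periodic components into the four listed types. The plan is to establish them in that order.

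First I would rule out wandering domains by quasiconformal surgery. Suppose, for contradiction, that $\Omega\subset F_f$ is a component whose forward images $\Omega_n=f^n(\Omega)$ are pairwise disjoint. Using that $f$ has only finitely many critical values, I would perform the routine reductions (pass to a forward image of $\Omega$, and if necessary to an iterate of $f$) so that $f$ restricts to a conformal isomorphism $\Omega_n\to\Omega_{n+1}$ for all large $n$. Then, fixing such an index $N$, I would take an arbitrary Beltrami differential $\mu_0\in L^\infty(\Omega_N)$ with $\|\mu_0\|_\infty<1$ and \emph{spread} it over the grand orbit of $\Omega$ by pulling back along the now-invertible iterates of $f$, extending by $0$ on the rest of $\CPo$; this yields a Beltrami differential $\mu$ on $\CPo$ with $\|\mu\|_\infty<1$ and $f^{*}\mu=\mu$. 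By the Measurable Riemann Mapping Theorem there is a quasiconformal homeomorphism $\phi_\mu$ of $\CPo$, normalized to fix $0,1,\infty$, solving the Beltrami equation for $\mu$, and the invariance $f^{*}\mu=\mu$ forces $f_\mu\bydef\phi_\mu\circ f\circ\phi_\mu^{-1}$ to be holomorphic, hence a rational map of degree $d$. The assignment $\mu_0\mapsto f_\mu$ then carries the infinite-dimensional unit ball of $L^\infty(\Omega_N)$ into the finite-dimensional space of degree-$d$ rational maps modulo M\"obius conjugacy, and since the dependence on $\mu_0$ is holomorphic this map cannot be injective. I would finish by checking that two parameters giving M\"obius-conjugate maps must have prescribed the same $\mu_0$ on $\Omega_N$ (a holomorphic conjugacy cannot undo the complex structure one has just deformed) --- a contradiction. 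Hence every component of $F_f$ eventually lands inside a periodic one.

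Next I would classify a periodic component $V$ of period $p$; put $g=f^p$, so $g(V)=V$. Since $\{g^n|_V\}$ is a normal family, I would study its limit functions. If some limit function $g^{n_k}\to h$ is a constant $z_0$, then applying this to the subsequence $n_k+1$ gives $g(z_0)=z_0$. When $z_0\in V$, the point $z_0$ is a fixed point of $g$ lying in $F_f$, so necessarily $|g'(z_0)|<1$ (the alternatives $|g'(z_0)|\ge1$ would either place $z_0$ on $J_f$ or make $V$ a rotation domain, incompatible with a constant limit); a Vitali argument then upgrades this to $g^n\to z_0$ locally uniformly on $V$, so $V$ is an attractive basin. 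When $z_0\in\partial V\subset J_f$, the Snail Lemma forces $g'(z_0)=1$ and $V$ is a parabolic basin. If instead no limit function is constant, every limit function is a non-constant holomorphic self-map of $V$, and from this one deduces (following Fatou) that $g|_V$ is injective; since $V$ is a hyperbolic surface (its complement contains the infinite set $J_f$), injectivity forces $V$ to be conformally a disc or an annulus of finite modulus and $g|_V$ an automorphism of it, necessarily of infinite order (a finite-order one would make some $g^m$ the identity on $V$, impossible for $\deg f>1$). An infinite-order automorphism of the disc with an interior fixed point is an irrational rotation, so $V$ is a Siegel disc; one with no interior fixed point pushes orbits to $\partial V$, returning to the constant-limit case; and an infinite-order automorphism of an annulus of finite modulus is an irrational rotation, so $V$ is an Arnold-Herman ring. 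Together with the first step this exhausts the four possibilities.

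The genuinely hard part is Step~1: the quasiconformal-surgery and dimension-count argument ruling out wandering domains, whose subtlety lies entirely in organizing the Beltrami deformation and proving that the resulting family of rational maps is injectively parametrized. By contrast, the classification of periodic components is comparatively classical, its only delicate ingredients being the Snail Lemma and the fact that an injective self-map of a multiply connected hyperbolic domain onto itself must be a rotation of an annulus.
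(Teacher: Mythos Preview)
The paper does not prove this theorem at all: it is quoted as a classical result, attributed to Sullivan with the citation~\cite{Sul85}, and used only as background for the discussion of forward dynamics. There is therefore no ``paper's own proof'' to compare against.

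That said, your outline is the standard one and is essentially correct in spirit: Sullivan's quasiconformal-deformation argument to exclude wandering domains, followed by the Fatou--Cremer classification of periodic components via normal-family limits, the Snail Lemma, and the hyperbolic geometry of $V$. The genuinely delicate step you flag --- showing that the map $\mu_0\mapsto f_\mu$ is injective (equivalently, that distinct Beltrami data on $\Omega_N$ yield non-M\"obius-conjugate rational maps) --- is indeed where the real work lies, and your sketch glosses over the reductions needed when $\Omega$ is multiply connected or when critical orbits accumulate on the grand orbit of $\Omega$. But since the paper offers no argument of its own, there is nothing further to compare.
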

Note that, in case of general entire maps, there might be countably many disjoint connected components $W_n$
of the Fatou set such that $f(W_n)\subset W_{n+1}$. Such sets are called {\em wandering domains} and the
fact that they cannot arise for rational maps is one of the most important contents of Sullivan's theorem above,
often called {\em Non Wandering Domain} Theorem. 

\medskip
The situation in the real case is much more complicated. Even the simplest non-trivial case of {\em unimodal maps}
on the interval, namely smooth maps from a closed interval into itself with a single critical point, whose rigorous study was
started by J.~Milnor and W.~Thurston~\cite{MT88} after the seminal paper on the logistic map by biologist R. May~\cite{May76},
has been fully understood only very recently thanks to fundamental contributions by A.~Avila (see the survey by Lyubich~\cite{Lyu12}
and the references therein).

We notice, first of all, that the characterization of the Julia and Fatou sets is weaker than in the complex case because real
maps are not necessarily open:
\begin{theorem}[see de Melo and van Strien~\cite{dMvS93}, Chapter 5]
  \label{them:dMvS1}
  Let $f:\RPo\to\RPo$ a generic analytical function and denote 
  by $\gamma_i$ the attractive cycles of $f$ and by $W_i$ the set of wandering intervals of $f$, namely those intervals
  whose iterates are all disjoint and that do not converge to a cycle. Then:
  \begin{enumerate}
  \item $F_f$ is backward invariant; 
  \item $J_f$ is forward invariant;
  \item $F_f=\sqcup_i\cF(\gamma_i)\sqcup_j W_j$;
  \item $J_f=\alpha_f(Z_{f})$;
  \item $J_f$ {\em contains} the closure of the set of repelling points of $f$;
  \item $f|_{F_f}$ is almost open in the sense that, if we denote by $U_i$ the connected components of $F_f$, then $f(U_i)\subset U_j$ for some $j$
    if $U_i\cap T_f=\emptyset$ and $f(U_i)\subset \overline{U}_j$ for some $j$ otherwise.
  \end{enumerate}
\end{theorem}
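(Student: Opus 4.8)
The plan is to handle the six claims in order of increasing difficulty: dispose of (1), (2) and (5) directly from the definitions, argue the structural statements (3) and (6) componentwise, and isolate (4) as the real obstacle. For backward invariance of $F_f$: if $x\in f^{-1}(F_f)$ then $f^{-1}(F_f)$ is open, $f(x)\in F_f$, so some subsequence $\{f^{n_k}\}$ of iterates converges locally uniformly on a neighbourhood $V$ of $f(x)$; since $f$ sends compact subsets of $f^{-1}(V)$ to compact subsets of $V$, the subsequence $\{f^{n_k+1}\}$ converges locally uniformly near $x$, so $x\in F_f$. Complementing gives that $J_f$ is forward invariant, which is (2) — and, since $f$ need not be open, I would \emph{not} expect $J_f$ to be backward invariant, a gap that resurfaces in (6). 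For (5): a repelling periodic point $p$ cannot lie in $F_f$, because no subsequence of iterates is equicontinuous at $p$ (nearby orbits are pushed off the cycle at a definite rate); as $J_f$ is closed it therefore contains the closure of the repelling set, and one should only claim containment here, in contrast with the complex case.

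\textbf{Items (3) and (6): structure of the Fatou set.} Each connected component $U$ of $F_f$ is an open arc of $\RPo$; the decisive input is the absence of wandering intervals for analytic one-dimensional maps (developed via cross-ratio and Koebe distortion estimates in Chapter~5 of~\cite{dMvS93}), which I would quote as a black box and which forces every component to be eventually periodic. One is then reduced to a periodic component $V$ with $f^N(V)=V$: the first return map $f^N|_V$ is a continuous self-map of an arc, hence has a fixed point by the intermediate value theorem, a periodic point of $f$; genericity enters here (Kupka--Smale-type hyperbolicity of all cycles), making this point attracting, and the usual one-dimensional monotonicity argument gives convergence of the return map to it on all of $V$. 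Thus $U\subset\cF_f(\gamma_i)$ for the associated attracting cycle, giving $F_f=\sqcup_i\cF_f(\gamma_i)\sqcup_j W_j$ with the $W_j$ empty in the analytic case but retained for completeness — this is (3). Item (6) then just records how $f$ carries components to components: $f(U)$ is again an arc with $f(\overline U)=\overline{f(U)}$, so it lies inside a single component $U_j$ precisely when $f|_U$ is open, which happens exactly when $U$ misses the turning set $T_f$, and otherwise a fold at a turning point may cost openness, leaving only $f(U)\subset\overline{U_j}$.

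\textbf{Item (4): $J_f=\alpha_f(Z_f)$.} This is where I expect the real work. For $\alpha_f(Z_f)\subset J_f$ I would use (3): a point $z\in F_f$ lies in some full basin $\cF_f(\gamma_i)$, which is both forward and backward invariant, so iterated preimages of a turning point $c$ can accumulate at $z$ only if $c\in\cF_f(\gamma_i)$ already; and examining the immediate basin — where the return map is monotone and maps any turning point it contains strictly inward — one sees that the backward orbit of such a $c$ accumulates only on $\partial\cF_f(\gamma_i)\subset J_f$, a contradiction. For the reverse inclusion I would argue by contraposition: if $z$ is not an accumulation point of the grand backward orbit $\bigcup_n f^{-n}(Z_f)$, then on a fixed neighbourhood of $z$ all inverse branches $f^{-n}$ are defined and univalent, and combining the real Koebe principle with Ma\~n\'e's theorem on uniform expansion of orbits bounded away from the turning set forces these branches to contract uniformly, so $\{f^n\}$ is equicontinuous near $z$ and $z\in F_f$. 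The genuinely technical ingredients — no wandering intervals, the real Schwarz/Koebe lemma, Ma\~n\'e's hyperbolicity theorem — are exactly the toolkit of Chapter~5 of~\cite{dMvS93}, which I would invoke rather than reprove; assembling them into the two inclusions, and in particular dealing cleanly with a turning point that sits inside a basin, is the delicate part.
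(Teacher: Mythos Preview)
The paper does not prove this theorem at all: it is stated as a known result, attributed in the theorem header to Chapter~5 of de~Melo and van~Strien~\cite{dMvS93}, and no argument is supplied. There is therefore nothing in the paper to compare your proposal against; you have attempted to reconstruct a proof where the author simply cites one.

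Your outline is broadly the right shape and correctly identifies the heavy tools one would import from~\cite{dMvS93} --- the non-existence of wandering intervals for analytic maps with non-flat critical points, the real Koebe/Schwarz distortion estimates, and Ma\~n\'e's hyperbolicity theorem --- and you are right that (4) is where the real content lies. Two soft spots are worth flagging. In (3), invoking the intermediate value theorem for the first-return map on a periodic \emph{open} arc is not enough: a continuous self-map of an open interval need not fix a point, so one really needs the boundary behaviour and the full classification of periodic Fatou components, not raw IVT plus genericity. In (4), the inclusion $\alpha_f(Z_f)\subset J_f$ is more delicate than your sketch suggests: a critical point can sit in a basin and have a finite (even one-point) backward orbit, in which case its $\alpha$-limit lies in $F_f$; genericity is needed here to rule out exceptional critical orbits, not only to make cycles hyperbolic. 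These are precisely the technicalities that the de~Melo--van~Strien machinery absorbs, which is why the paper, like most of the literature, simply cites the chapter rather than reproving it.
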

%
%
%
One of the most general results on maps $\RPo\to\RPo$ is the following generalization of Sullivan's
Non Wandering Domain theorem~\cite{MMS92,dMvS93,vSt10}:
\begin{theorem}[Martens, de Melo and van Strien~\cite{MMS92}]
  Let $f:\RPo\to\RPo$ a generic non-invertible $C^2$ map. Then:
  \begin{enumerate}
  \item every connected component of $F_f$ falls, in a finite time, in a periodic component;
  \item there are only finitely many periodic components of $F_f$.
  \end{enumerate}
  Moreover, for almost all $x\in\RPo$, the set $\omega_f(x)$ is of the following three types:
  \begin{enumerate}[label=\roman*.]
  \item a periodic orbit;
  \item a minimal Cantor set;
  \item a finite union of intervals containing a critical point.
  \end{enumerate} 
\end{theorem}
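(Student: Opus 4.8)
\medskip
\noindent\emph{Proof proposal.}
The plan is to split the statement into its structural half (assertions (1) and (2) about $F_f$) and its metric half (the trichotomy for $\omega_f(x)$), and to reduce both to a single technical core: the real \emph{a priori bounds} (the ``Koebe'' or cross-ratio distortion estimates) for $C^2$ maps with non-flat critical points, together with the absence of wandering intervals, which is itself proved from those bounds. By genericity I may assume $f$ has only finitely many critical points, all non-flat.

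For the structural half I would first invoke the decomposition of $F_f$ recalled in Theorem~\ref{them:dMvS1}: up to the wandering intervals, $F_f$ is the disjoint union of the forward basins $\cF(\gamma_i)$ of the attracting (or semi-attracting) cycles of $f$. So assertion (1) --- that every component of $F_f$ enters a periodic component in finite time --- is exactly the statement that there are no wandering intervals. I would prove this by the classical ``shortest interval'' argument: if $I$ were wandering, the intervals $I_n=f^n(I)$ would be pairwise disjoint, hence $\sum_n|I_n|<\infty$ and $|I_n|\to 0$; picking among $I_0,\dots,I_N$ an interval of minimal length, one can choose a neighborhood of it disjoint from the others and of comparable size, and pulling this neighborhood back along the orbit the cross-ratio estimates for $C^2$ non-flat maps furnish a scaled neighborhood of $I$ on which the corresponding iterate of $f$ has uniformly bounded distortion; this keeps $|I_N|$ comparable to $|I|$, contradicting $|I_n|\to 0$. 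Granting this, $F_f=\sqcup_i\cF(\gamma_i)$, so every component lands after finitely many steps in the immediate basin of some $\gamma_i$, which gives (1); and for (2), each immediate basin has at most $2\,\#\gamma_i$ components, while --- by the classical fact that every attracting or neutral periodic orbit has a critical point of $f$ in its immediate basin --- the number of such cycles is at most the number of critical points, so $F_f$ has only finitely many periodic components.

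For the metric half I would fix a critical point $c$ and run the renormalization analysis of Martens's ``principal nest'' of nice intervals around $c$, landing on the dichotomy ``$f$ is infinitely renormalizable at $c$'' versus ``$f$ is only finitely renormalizable at $c$''. In the first case the renormalization intervals nest down to a minimal Cantor set carrying $\overline{\{f^n(c)\}}$, giving type ii. In the second case I pass to the deepest renormalization and distinguish whether $c$ is recurrent: if not, $c$ falls into a basin (type i) or onto a cycle of intervals (type iii); if it is recurrent, the real bounds produce either a transitive cycle of intervals --- which, being forward invariant with non-empty interior, must contain a critical point, hence is of type iii --- or, in the persistently recurrent non-renormalizable situation, again a minimal Cantor set (type ii). Taking the finitely many critical points together yields the three prototypes. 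Finally, to see that these exhaust $\omega_f(x)$ for Lebesgue-almost every $x$, let $B$ be the set of $x$ whose $\omega$-limit is none of them; $B$ avoids all the basins, solenoids and interval cycles above and, by the absence of wandering intervals, cannot ``wander off''. A Lebesgue density point $x_0$ of $B$ would admit, by pulling back a neighborhood of a sufficiently deep close return with bounded distortion, a macroscopic interval a definite fraction of which lies in $B$; iterating this density estimate contradicts the structure just obtained, so $|B|=0$.

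The hard part --- and the genuine contribution of Martens, de Melo and van Strien --- will be the distortion control in the $C^2$ category \emph{without} a negative Schwarzian derivative: one cannot use the minimum principle for $|Df|$, so one must instead build the cross-ratio operators, prove that their nonlinearity is summable along orbit segments that keep returning to a fixed scale, and interleave this with a careful combinatorial choice of the nice intervals so that a definite Koebe space is never destroyed. Once these real bounds are in place, the wandering/eventually-periodic dichotomy, the counting of critical points, the reduction of the trichotomy to the renormalization alternative, and the concluding density argument are all comparatively routine.
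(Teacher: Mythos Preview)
The paper does not contain a proof of this theorem. It is quoted as a known result from the literature, with attribution to Martens, de Melo and van Strien~\cite{MMS92} (and, just above it, the additional references~\cite{dMvS93,vSt10}); the article is a survey-and-conjecture paper that \emph{collects} such results in order to motivate the conjectures in Section~3, and it offers no argument of its own for this statement. So there is no ``paper's own proof'' to compare your proposal against.

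That said, your outline is a fair sketch of the strategy in the cited source: you correctly single out the real (cross-ratio/Koebe) distortion bounds for $C^2$ maps with non-flat critical points as the technical core replacing the negative-Schwarzian hypothesis, deduce from them the absence of wandering intervals to obtain (1), count attracting/neutral cycles by critical points to obtain (2), and run the finitely/infinitely renormalizable dichotomy plus a density argument for the metric trichotomy. If you want to turn this into an actual proof rather than a roadmap, the substantive work you flag at the end --- making the cross-ratio distortion summable in the $C^2$ category and constructing the principal nest with definite Koebe space --- is precisely what the cited reference supplies; the present paper simply assumes it.
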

\begin{remark}
  Note that just in 2016 Astorg {\sl et al.} showed~\cite{ABDPR16} that this result is sharp, in the sense that, both in the
  real and complex case, there are polynomials maps $\KPt\to\KPt$, $K=\bR$ or $\bC$, who admit wandering domains. 
\end{remark}
In spite of an extraordinary amount of articles and books devoted to the study of rational maps $\CPo\to\CPo$ in the last forty years,
very few have been dedicated to the general study of arguably the most natural generalization of them, namely rational maps
$\RPt\to\RPt$. Among the few exceptions are the study of Julia sets of dianalytic maps by J. Hawkins et al.~\cite{GH14,Haw15,HR17}
and of the dynamics of a particular family of birational maps by E. Bedford and J. Diller~\cite{BD05,BD06,BD09}. A similar situation
holds in the subcase of Newton maps, that are the subject of this article:
\begin{definition}
  Let $p$ be a polynomial in one variable over real or complex numbers. We call {\em Newton map} associated to $p$ the rational map
  $$N_p(z)=z-p(z)/p'(z).$$
\end{definition}
The Newton's method for finding the root of a function (e.g. see~\cite{Ypm95,Deu11}), of paramount importance in the Numerical Analysis field,
is based on the elementary facts that, for a generic function $p$, the following holds: 1. the set of the roots of $p$ coincides with the set of
(bounded) fixed points of $N_p$; 2. all of these
fixed points are {\em attractive} (in fact, {\em super-attractive}). Hence {\bf iterations of $N_p$ lead naturally to a root of $p$ when the initial
  point is chosen close enough to it} -- see~\cite{Kan49,Deu11} for a very general classical proof of this fact in the context of Banach spaces
and~\cite{HSS01} for a clever algorithm to retrieve {\em all} roots of a complex polynomial based on strong general results of holomorphic dynamics.
\begin{figure}
  \centering
  \begin{tabular}{cc}                                                                                                                                                                  
    \includegraphics[width=6.3cm]{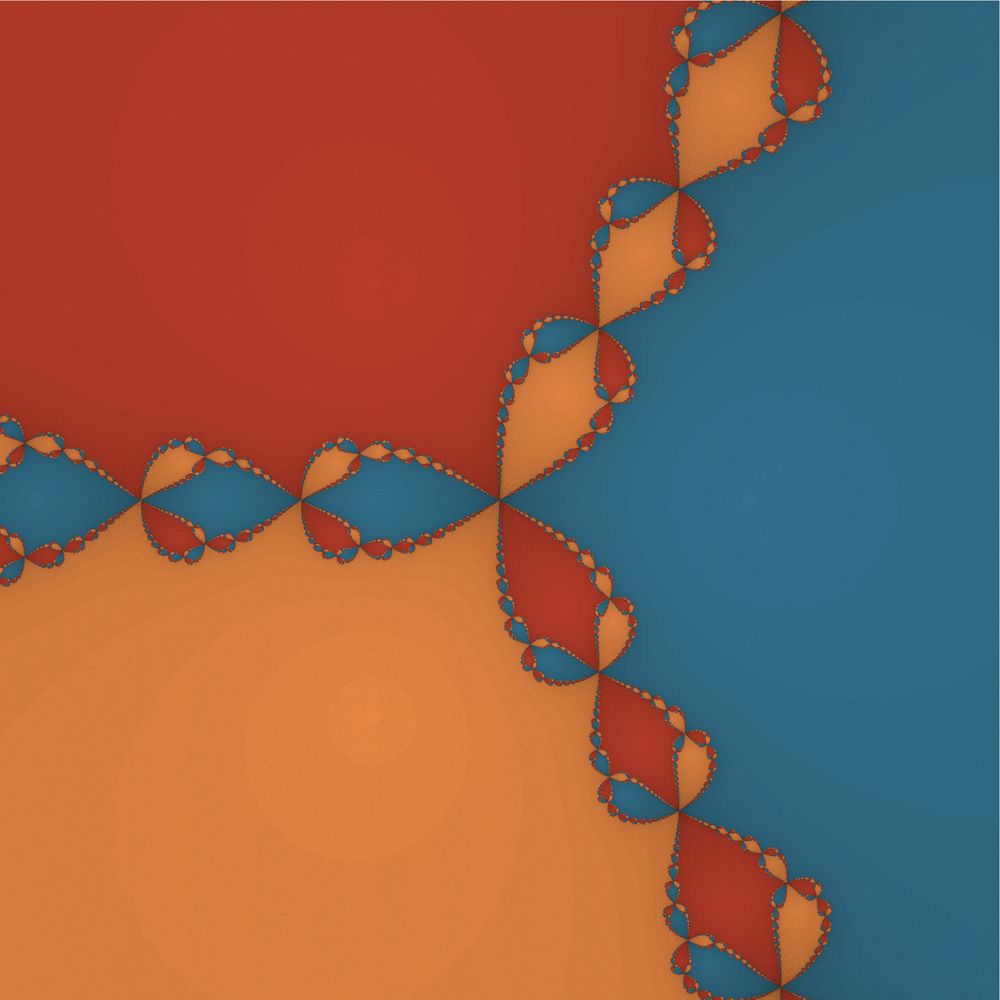}&\includegraphics[width=6.3cm]{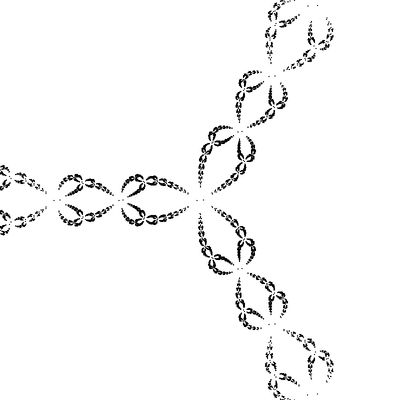}\\
    \includegraphics[width=6.3cm]{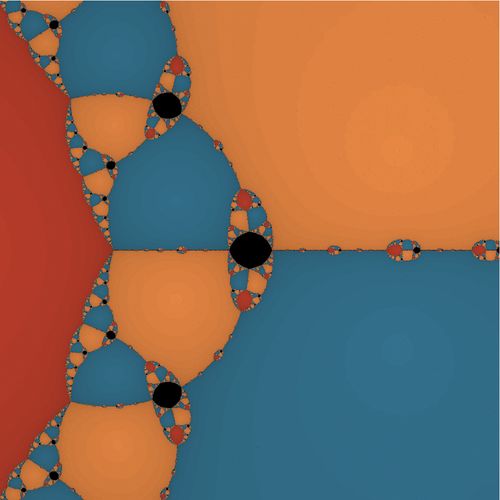}&\includegraphics[width=6.3cm]{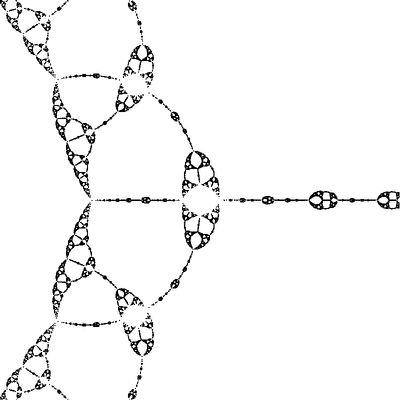}\\
  \end{tabular}
  \caption{%
    \em
    Basins of attraction (left) and Julia set (right) of the Newton maps associated to $p(z)=z^3-1$ in the square $[-2,2]^2$ (first row)
    and $q(z)=z^3-2z+2$ in the square $[-1.5,1.5]^2$ (second row).
    We assigned a color to each root all points belonging to a root's basin has been plotted with that same color.
    The interior of the black islands that appear in case of $q$ does not belong to $J_q$ (see point 3 of Thm~\ref{thm:JF})
    but rather correspond to Fatou components of points that are attracted to non-trivial cycles rather than any of $q$'s roots
    (equivalently, they are attracting basins of some root of $f^k$ for some $k>1$). The right column shows an approximation
    of the $\alpha$-limit set of the point $z_0=5+i$ at the 10th recursion level. 
  }
  \label{fig:cjs}
\end{figure}

Newton maps of complex polynomials are quite special rational functions: for instance, the point at infinity is always a fixed repelling point for them.
The following theorem~\cite{Hea88} provides a full characterization for them:
\begin{theorem}[Head (1988)]
  Every rational complex map $R$ of degree $d$ with $d$ distinct superattracting fixed points is conjugated, via a Mobius
  transformation, to the Newton's map $N_p$ of a polynomial $p$ of degree $d$.
  If $\infty$ is not superattracting for $R$ and $R(\infty)=\infty$, then $R=N_p$.
\end{theorem}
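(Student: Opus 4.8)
The plan is to exploit the characterization of Newton maps through their behavior at the fixed points and at infinity. First I would recall that a rational map $N_p$ of a degree $d$ polynomial $p(z)=a\prod_{i=1}^d(z-r_i)$ (assume for now $p$ has distinct roots) has precisely the $r_i$ as fixed points, each superattracting since $N_p'(z)=p(z)p''(z)/p'(z)^2$ vanishes at a simple root; moreover $N_p$ has degree $d$ as a rational map and $\infty$ is a fixed point with $N_p'(\infty)=d/(d-1)\neq 0$, so $\infty$ is a repelling (in particular non-superattracting) fixed point. Conversely, given a rational map $R$ of degree $d$ with $d$ distinct superattracting fixed points $c_1,\dots,c_d$, the strategy is to show $R$ must be of this form. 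Counting fixed points with multiplicity, a degree $d$ rational map has $d+1$ fixed points on $\CPo$; the $d$ superattracting ones each have multiplier $0$, and I would first argue that these are therefore simple fixed points of the fixed-point equation $R(z)=z$ (multiplier $\neq 1$), leaving exactly one further fixed point $c_\infty$, which is distinct from the $c_i$.

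Next I would conjugate by a Möbius map $\phi$ sending $c_\infty$ to $\infty$, so that $\tilde R=\phi\circ R\circ\phi^{-1}$ fixes $\infty$ and has its $d$ superattracting fixed points at finite points $\tilde c_i=\phi(c_i)$. The key computation is then to reconstruct a polynomial $p$ from $\tilde R$. The natural candidate is to solve the differential equation coming from $N_p(z)=z-p/p'$, i.e. $p'/p = 1/(z-N_p(z))$; so I would set $q(z)=\exp\!\big(\int dz/(z-\tilde R(z))\big)$ and check that the integrand has simple poles exactly at the $\tilde c_i$ with residue $1$ (this is where superattractivity of $\tilde R$ at $\tilde c_i$ enters: $\tilde R(z)=\tilde c_i+O((z-\tilde c_i)^2)$ forces $z-\tilde R(z)$ to have a simple zero at $\tilde c_i$, and one must also verify the residue is exactly $1$, not merely nonzero — this follows from $\tilde R'(\tilde c_i)=0$ together with a local expansion). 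Granting that, $q$ extends to a polynomial with simple roots at the $\tilde c_i$, and by construction $N_q$ and $\tilde R$ have the same Schwarzian-type invariant; I would then show directly that $N_q=\tilde R$ by checking they agree as rational maps (same value and derivative data forcing equality after accounting for degree).

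The main obstacle I anticipate is the residue normalization and the global-versus-local passage: knowing $z-\tilde R(z)$ has a simple zero at each $\tilde c_i$ gives local data, but to conclude that $\exp(\int dz/(z-\tilde R(z)))$ is genuinely a polynomial (single-valued, no essential singularity, correct degree $d$) one must control the behavior at $\infty$ and check there are no other poles of $1/(z-\tilde R(z))$ with non-integer residue — equivalently, that $z-\tilde R(z)$ has no other zeros, which uses that $\tilde R$ has no finite fixed points besides the $\tilde c_i$. For the second assertion, if $\infty$ is not superattracting for $R$ and $R(\infty)=\infty$, then $c_\infty=\infty$ already, no Möbius conjugation is needed, the above construction applies verbatim, and one gets $R=N_p$ on the nose. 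Finally I would remark that the distinct-roots hypothesis is essential and that the argument localizes the whole classification to the elementary fact that simple roots of $p$ are exactly the superattracting fixed points of $N_p$ with the residue of $p'/p$ equal to $1$.
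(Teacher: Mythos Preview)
The paper does not prove this theorem; it is quoted as a known result with a reference to Head's 1988 thesis. So there is nothing to compare against except correctness, and your outline is essentially correct and is the standard argument.

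Two small clean-ups. First, the exponential/integration step is unnecessary and is where your write-up gets vague. After moving the extra fixed point to $\infty$, write $\tilde R=P/Q$ with $\deg P=d>\deg Q$; then $z-\tilde R(z)=(zQ-P)/Q$, and the finite fixed points of $\tilde R$ are exactly the zeros of $zQ-P$. Since each $\tilde c_i$ has multiplier $0\neq1$ it is a \emph{simple} zero, and there are $d$ of them while $\deg(zQ-P)\le d$, so $zQ-P=c\prod_i(z-\tilde c_i)$. Thus $1/(z-\tilde R(z))=Q(z)/\bigl(c\prod_i(z-\tilde c_i)\bigr)$ is a proper rational function, and your residue computation (from $\tilde R(z)=\tilde c_i+O((z-\tilde c_i)^2)$) gives residue $1$ at each $\tilde c_i$; partial fractions then force
\[
\frac{1}{z-\tilde R(z)}=\sum_i\frac{1}{z-\tilde c_i}=\frac{p'(z)}{p(z)},\qquad p(z)=\prod_i(z-\tilde c_i),
\]
so $\tilde R=z-p/p'=N_p$ \emph{identically}. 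No integration, no worry about ``polynomial versus entire'', and no separate verification that $N_p$ and $\tilde R$ ``agree as rational maps'' is needed.

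Second, the phrase ``same Schwarzian-type invariant'' is a red herring here: nothing Schwarzian enters, and the equality $N_p=\tilde R$ is immediate from $p'/p=1/(z-\tilde R)$. For the last clause of the theorem, your remark is right: if $R(\infty)=\infty$ and $\infty$ is not superattracting, then $\infty$ is already the $(d{+}1)$st fixed point, no M\"obius move is needed, and the same computation gives $R=N_p$ on the nose.
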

Correspondingly, their Fatou and Julia sets have special properties (e.g. see~\cite{Ruc08}):
\begin{theorem}
  \label{thm:JNp}
  Let $p$ be a polynomial with complex coefficients. Then:
  \begin{enumerate}
  \item $J_{N_p}$ has empty interior;
  \item $J_{N_p}$ is connected;
  \item all connected component of $F_{N_p}$ are simply connected;
  \item $F_{N_p}$ has no Arnold-Herman rings;
  \item $J_{N_p} = \alpha_{N_p}\left(Z_{N_p}\right)$;
  \item all {\em immediate basins} $B_i$ of the roots of $p$, namely the connected components of $F_{N_p}$ containing those roots,
    are unbounded (i.e. $\infty\in\partial B_i$); 
    \item $\infty$ is a repulsive fixed point for $N_p$.
  \end{enumerate}
\end{theorem}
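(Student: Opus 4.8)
All seven items are classical, and the plan is to dispatch the elementary ones directly while isolating the two genuinely non-trivial inputs that carry the rest: Shishikura's connectivity criterion (for (2)--(4)) and Przytycki's theorem on the immediate basins of Newton maps (for (6)). Throughout I assume that $p$ has at least two distinct roots, so that $N_p$ is a rational map of degree $\ge2$ and Theorems~\ref{thm:JF} and~\ref{thm:Jf} apply. Item (7) I would settle by a one-line expansion at infinity: writing $d=\deg p$, one has $p/p'=\tfrac{z}{d}+c+O(1/z)$ as $z\to\infty$ for some constant $c$, hence $N_p(z)=\tfrac{d-1}{d}\,z-c+O(1/z)$, and in the chart $w=1/z$ the fixed point $w=0$ has multiplier $\tfrac{d}{d-1}$, of modulus $>1$; thus $\infty$ is a repelling fixed point and, in particular, $\infty\in J_{N_p}$. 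As for (1), every root $\xi$ of $p$ is an attracting fixed point of $N_p$ (a local expansion gives $N_p(z)-\xi=O((z-\xi)^2)$ at a simple root and $N_p'(\xi)=1-1/m\in[0,1)$ at a root of multiplicity $m$), hence $\xi\in F_{N_p}$ by Theorem~\ref{thm:JF}(1); so $F_{N_p}\ne\emptyset$, and Theorem~\ref{thm:JF}(4) forces $J_{N_p}$ to have empty interior.

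For (2) I would invoke Shishikura's theorem that a rational map of degree $\ge2$ possessing at most one weakly repelling fixed point has connected Julia set. The finite fixed points of $N_p$ are precisely the roots of $p$, all attracting by the expansion just used, while by (7) the remaining fixed point $\infty$ is repelling; thus $N_p$ has exactly one weakly repelling fixed point, and $J_{N_p}$ is connected. Then (3) is automatic, through the standard planar-topology fact that the Julia set of a rational map of degree $\ge2$ is connected if and only if each of its Fatou components is simply connected; and (4) follows at once, an Arnold-Herman ring being a topological annulus, which (3) excludes.

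The real obstacle is (6): each immediate basin $B_i$ of a root $\xi_i$ of $p$ is unbounded, i.e.\ $\infty\in\partial B_i$. Here I would appeal to Przytycki's theorem that the immediate basins of a polynomial Newton map are simply connected and unbounded; the guiding picture is the Newton flow $\dot z=-p/p'$, for which $\infty$ is a source and each immediate basin $B_i$ contains a trajectory emanating from $\infty$, so that $\infty\in\partial B_i$. A soft argument does not suffice: Theorem~\ref{thm:JF}(9) only yields $\partial\cF_{N_p}(\xi_i)=J_{N_p}$, so, since $\infty\in J_{N_p}$, the \emph{full} basin of $\xi_i$ is unbounded, but this by itself does not place $\infty$ on the boundary of the immediate basin $B_i$.

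Finally (5) is the complex counterpart of Theorem~\ref{them:dMvS1}(4): the set $Z_{N_p}$ where the Jacobian of $N_p$ degenerates is the critical set of $N_p$, consisting of the simple roots of $p$ together with the $d-2$ zeros of $p''$ (the ``free'' critical points), and Theorem~\ref{thm:Jf} identifies the $\alpha$-limit of the latter with $J_{N_p}$. In short, every step but (6) is bookkeeping around Theorems~\ref{thm:JF} and~\ref{thm:Jf} and Shishikura's criterion; the weight of the statement rests on Przytycki's theorem, which is the one ingredient with no short proof.
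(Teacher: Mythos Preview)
The paper does not give its own proof of this theorem; it is quoted as a collection of known facts with a pointer to~\cite{Ruc08}. So there is nothing in the paper to compare your argument against, and your plan is the standard one: the direct expansions for (1) and (7) are exactly right, and you correctly isolate Shishikura's connectivity criterion as the engine behind (2)--(4) and Przytycki's theorem as the substantive input for (6).

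There is one genuine gap, in your treatment of (5). You correctly observe that $Z_{N_p}$ consists of the simple roots of $p$ together with the free critical points (the zeros of $p''$), but you then apply Theorem~\ref{thm:Jf} only to the latter and say nothing about the former. A simple root $c_i$ is a superattracting \emph{fixed} point, so $c_i\in N_p^{-m}(c_i)$ for every $m\ge0$ and hence $c_i\in\alpha_{N_p}(c_i)$; since $c_i\in F_{N_p}$, this already gives $\alpha_{N_p}(Z_{N_p})\supsetneq J_{N_p}$ whenever $p$ has a simple root. The paper's own Corollary immediately following the theorem confirms this, stating $\alpha_{N_p}(c_i)=J_{N_p}\cup\{c_i\}$; so item~(5) as written is a slight overstatement, and no argument can establish it as a literal equality. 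A second, smaller omission: to apply Theorem~\ref{thm:Jf} to a free critical point you need it to lie in $J_{N_p}$ or in an attracting/parabolic basin. This does hold---rotation domains contain no critical points, and Herman rings are already excluded by~(4)---but it is a step that should be made explicit rather than left implicit.
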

Notice that it is enough considering polynomials of degree three in order to find cases of Newton maps with
parabolic basins and Siegel discs (e.g. see Sec.~3.2 of~\cite{Ruc08}), although there seem to be no concrete example
available in literature.
\begin{corollary}
  Let $p$ be a generic complex polynomial of degree $n$ with roots $R=\{r_1,\dots,r_n\}$ and such that $N_p$
  has no Siegel domains or attracting $k$-cycles for $k\geq2$ and $J_{N_p}$ has Lebesgue measure zero. Then
  $\alpha_{N_p}(c_i)=J_{N_p}\cup\{c_i\}$, $i=1,\dots,n$, and $\alpha_{N_p}(z)=J_{N_p}$ for any other point.
  Equivalently, $\cB(J_{N_p})=\CPo\setminus R$ and $\cB(J_{N_p}\cup R)=\CPo$. In particular, $N_p$ is
  a \simple\ map.
\end{corollary}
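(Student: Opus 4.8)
The plan is to derive the statement from Theorem~\ref{thm:Jf} together with the Corollary stated immediately after it, so that the only genuine work is to put ourselves in a position to apply the latter, i.e.\ to show that the sole attractors of $N_p$ are its fixed points $c_i:=r_i$.

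First I would pin down the fixed points. Since $p$ is generic its roots $r_i$ are simple, so each $c_i$ is a superattractive fixed point of $N_p$ (a critical point of $N_p$, of local degree $2$); hence $c_i$ is an attractor, its forward basin $\cF_{N_p}(c_i)$ is open and non-empty and therefore of positive measure, and $c_i$ lies in its immediate basin $B_i\subset F_{N_p}$, so $c_i\notin J_{N_p}$. The only remaining fixed point is $\infty$, which is repelling by Theorem~\ref{thm:JNp}.7. To exclude any further attractor I would invoke Sullivan's Non-Wandering-Domain theorem: every component of $F_{N_p}$ is eventually periodic, and a periodic component is an attractive basin, a parabolic basin, a Siegel disc or an Arnold--Herman ring. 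Herman rings are ruled out by Theorem~\ref{thm:JNp}.4; Siegel discs and attracting $k$-cycles with $k\ge 2$ are ruled out by hypothesis; a parabolic basin would force a rationally indifferent cycle of $N_p$, which for generic $p$ does not occur (there is no indifferent fixed point, the finite ones being superattractive and $\infty$ repelling, and an indifferent cycle of period $\ge 2$ is a positive-codimension condition on $p$). Consequently every component of $F_{N_p}$ eventually lands in one of the immediate basins $B_i$, so $F_{N_p}=\bigsqcup_{i=1}^n\cF_{N_p}(c_i)$, $J_{N_p}=\CPo\setminus\bigsqcup_{i=1}^n\cF_{N_p}(c_i)$, and the $c_i$ are exactly the attractors of $N_p$.

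Now the Corollary following Theorem~\ref{thm:Jf} applies verbatim — its hypotheses, "the only attractors are the fixed points" (just proved) and "$J_{N_p}$ has zero measure" (assumed), are in hand — and yields that $N_p$ is \simple\ with $J=J_{N_p}$. For the sharper description of the $\alpha$-limits I would feed the decomposition $\CPo=J_{N_p}\sqcup\bigsqcup_i\cF_{N_p}(c_i)$ back into Theorem~\ref{thm:Jf}. A generic Newton map has no exceptional points (the backward orbit of a root, and that of $\infty$, is already infinite), so $\alpha_{N_p}(z)\supset J_{N_p}$ for every $z$, with equality precisely when $z\in J_{N_p}$ or $z$ lies in the basin of a root but is not that root. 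Every $z\notin R$ is of exactly one of these two types, hence $\alpha_{N_p}(z)=J_{N_p}$ there; for $z=c_i$ the second assertion of Theorem~\ref{thm:Jf} forbids $\alpha_{N_p}(c_i)=J_{N_p}$, and since $c_i$ is fixed one has $c_i\in N_p^{-m}(c_i)$ for all $m$, so $c_i\in\alpha_{N_p}(c_i)$, giving $\alpha_{N_p}(c_i)=J_{N_p}\cup\{c_i\}$. Unwinding the definition of the backward basin then gives $\cB(J_{N_p})=\{z:\alpha_{N_p}(z)\subset J_{N_p}\}=\CPo\setminus R$ and $\cB(J_{N_p}\cup R)=\CPo$, the announced reformulation.

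I expect the only real obstacle to be Step~1, and inside it the parabolic case, which is not excluded by an explicit hypothesis and has to be dealt with either by an appeal to genericity or by a finer structural argument using that $\infty$ is repelling and that the immediate basins of a Newton map are unbounded and simply connected (Theorem~\ref{thm:JNp}). A minor additional point is the "at most two exceptions" clause of Theorem~\ref{thm:Jf}: one should check that none of the $c_i$ is among those exceptional points, which again follows from genericity.
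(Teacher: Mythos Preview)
The paper gives no explicit proof of this corollary; it is stated immediately after Theorem~\ref{thm:JNp} as a direct consequence of the preceding material (Theorem~\ref{thm:Jf}, the corollary following it, Sullivan's theorem, and Theorem~\ref{thm:JNp}). Your derivation is exactly the intended one: rule out all Fatou components other than the immediate basins via Sullivan's classification and the stated hypotheses, then invoke the earlier corollary for ``\simple'' and Theorem~\ref{thm:Jf} for the $\alpha$-limits. Your handling of the parabolic case via genericity is also what the paper has in mind.

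One point deserves care. Your argument for $\alpha_{N_p}(c_i)=J_{N_p}\cup\{c_i\}$ only establishes the inclusion $\supseteq$: you observe $c_i\in N_p^{-m}(c_i)$ for all $m$ and that Theorem~\ref{thm:Jf} gives $J_{N_p}\subset\alpha_{N_p}(c_i)$, but you do not argue that nothing else lies in the $\alpha$-limit. In fact, since $c_i$ is a fixed point, every preimage $w\in N_p^{-N}(c_i)$ satisfies $w\in N_p^{-m}(c_i)$ for all $m\ge N$, so the entire backward grand orbit of $c_i$ sits inside $\alpha_{N_p}(c_i)$; for $n\ge 3$ this grand orbit is strictly larger than $\{c_i\}$. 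So the literal equality $\alpha_{N_p}(c_i)=J_{N_p}\cup\{c_i\}$ is an imprecision already present in the paper's statement. This does not affect the substantive conclusions: a normal-families argument on the Fatou set shows $\alpha_{N_p}(c_i)\cap F_{N_p}$ is exactly that countable grand orbit, so one still gets $\cB(J_{N_p})=\CPo\setminus R$, $\cB(J_{N_p}\cup R)=\CPo$, and that $N_p$ is \simple.
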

\begin{example}
  This is the case of the Newton maps associated to the polynomials $p(z)=z^n-1$.
  Consider again, for instance, the case of $p(z)=z^3-1$, so that $N_p(z)=\frac{2z^3+1}{3z^2}$ (see Fig.~\ref{fig:cjs}).
  $N_p$ has exactly three attractors, the three roots of unity $u_i$, $i=1,2,3$.
  There cannot be attracting cycles other than these fixed points because, by Theorem~\ref{thm:Fatou}, if there were
  one then a critical point of $N_p$ should converge to it, but for this map $Z_{N_p}$ coincides with the set of zeros of $N_p$.
  Each forward basin $\cF_p(u_i)$ is the disjoint union of countably many simply connected open sets and the boundary
  $\partial\cF_p(u_i)=\overline{\cF_p(u_i)}\setminus\cF_p(u_i)$ of each of them is equal to the Julia set $J_f$.
\begin{figure}
  \centering
  \begin{tabular}{cc}
    \includegraphics[width=6.3cm]{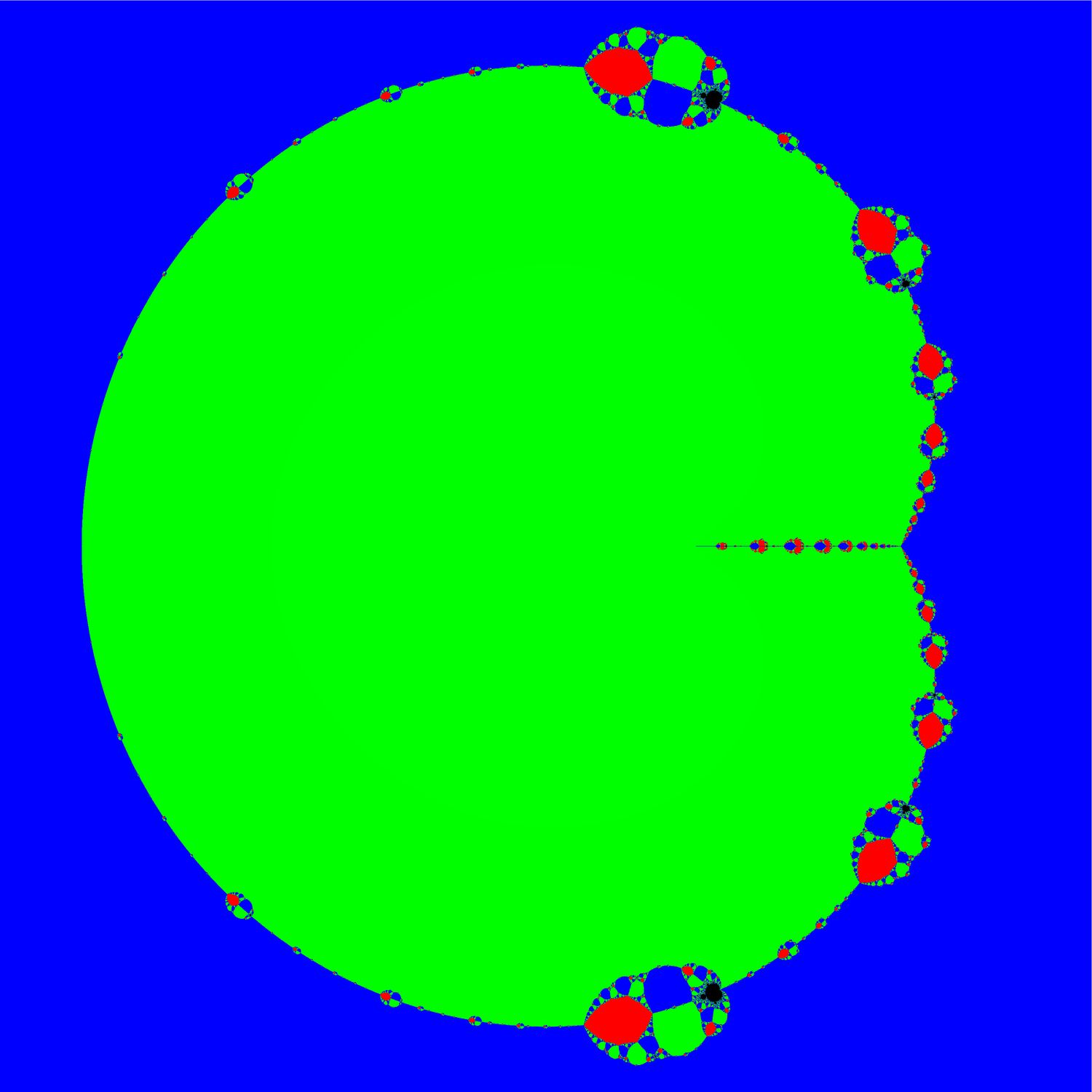}&\includegraphics[width=6.3cm]{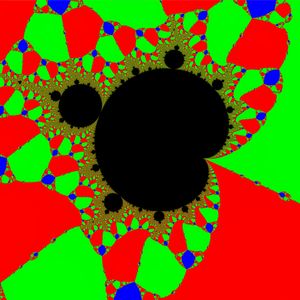}\\
    \includegraphics[width=6.3cm]{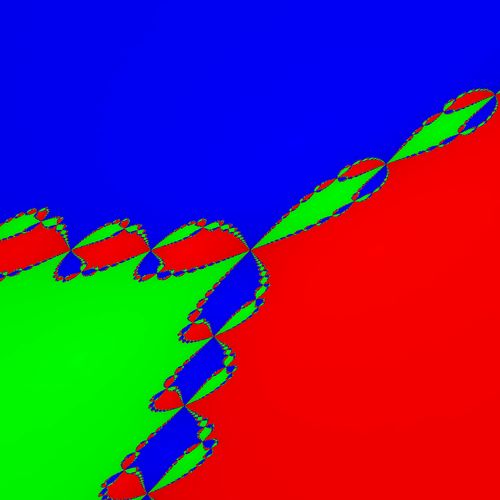}&\includegraphics[width=6.3cm]{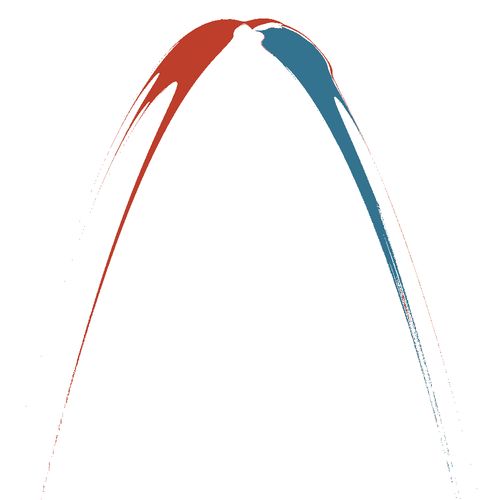}\\
  \end{tabular}
  \caption{%
    \em
    Top: $\omega$-limit of the origin under the Newton map of $f_A(z)=z^3+(A-1)z-A$
    for values of $A$ in the square $[-2.3,1.7]\times[-2,2]$ of the complex plane (left).
    For most values of $A$, the origin converges to one of the three roots but there is a non-zero
    volume set of values (black points) for which the origin converges to a non-trivial attracting cycle.
    The zoom (right) shows that the connected components of this set are the celebrated Mandelbrot fractal.
    Bottom: basins of attraction of the Newton method applied to two intrinsically complex functions.
    On the left, $f(z)=z^3-1$ but the complex structure has been modified so that $i$ is
    represented by the vector $(1,1)$ rather than $(0,1)$. On the right, the function is $\psi^*f$,
    where $\psi(x,y)=(x,y+x^2)$ and $f(z)=z^2-1$.
  }
  \label{fig:mps}
\end{figure}

  The Julia set is the only repellor of $N_p$ and $\cB_p(J_f)=\CPo\setminus\{u_1,u_2,u_3\}$.
  In fact, the equation $N_p(w)=z$ has always three solutions (taking into account multiplicity) and this defines three
  meromorphic functions $w_i$ so that $N_p^{-1}(z)=\{w_1(z),w_2(z),w_3(z)\}$. Following Barnsley (see~\cite{Bar88}, Sec. 7.3),
  we can restrict the $w_i$ to the complement of some open neighborhood of the roots of $p$, so that the Iterated
  Function System generated by these restrictions has a unique attractor, which coincides with $J_{N_p}$.
\end{example}
{\bf In this article we are mostly interested in the size of the set of points that do not converge to any root.}
As a consequence of a deep study by Lei Tan of the dynamics of Newton maps coming from cubic polynomials~\cite{Tan97},
we know that there are complex Newton maps whose Julia set has measure larger than zero, although no concrete example appears
in literature so far. On the other side, the following theorem allows to find easily the existence of non-trivial attracting
periodic cycles, whose presence also causes the set of non-converging points to be of non-zero measure:
%
%
\begin{theorem}[Fatou]
  \label{thm:Fatou}
  If a rational map has an attracting periodic cycle, then the orbit of at least one of its critical points will converge to it.
\end{theorem}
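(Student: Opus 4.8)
I will aim to prove the sharper statement that, for a rational map of degree at least $2$, the immediate basin of every attracting fixed point contains a critical point; the theorem as stated then follows by the chain rule. \emph{Reduction.} Given an attracting $N$-cycle $\gamma=\{z_0,\dots,z_{N-1}\}$ of $f$, I would set $g=f^N$, so that $z_0$ is an attracting fixed point of $g$ with multiplier $\lambda=g'(z_0)$, $|\lambda|<1$. Since $(f^N)'(z)=\prod_{j=0}^{N-1}f'\!\big(f^j(z)\big)$, a point $w$ is critical for $g$ precisely when $f^j(w)$ is critical for $f$ for some $0\le j<N$; and if such a $w$ lies in the basin of $z_0$ under $g$, then the $f$-orbit of $w$, hence that of the critical point $f^j(w)$ it produces, converges to $\gamma$. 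So it is enough to find a critical point of $g$ in the basin of $z_0$. If $\lambda=0$ this is immediate, $z_0$ itself being such a point; so I would assume $\lambda\ne0$ and, after a M\"obius change of coordinate, $z_0=0$.

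\emph{Inverse branches.} Let $B_0$ be the immediate basin of $0$ for $g$ and suppose, for contradiction, that it contains no critical point of $g$. I would fix a round disc $D\ni0$ with $\overline D\subset B_0$ and $g(\overline D)\subset D$ (possible since $|g'(0)|<1$), and for each $n$ let $V_n$ be the connected component of $g^{-n}(D)$ containing $0$. Every point of $V_n$ lands in $D\subset B_0$ under $g^n$, so $V_n\subset B_0$; hence $g^n\colon V_n\to D$ is a proper map with no critical points, hence --- $D$ being simply connected --- a homeomorphism, and I obtain branches $h_n=(g^n|_{V_n})^{-1}\colon D\to V_n\subset B_0$ with $h_n(0)=0$ and $h_n'(0)=\big((g^n)'(0)\big)^{-1}=\lambda^{-n}$.

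\emph{Normality versus the derivative.} By Theorem~\ref{thm:JF}, $J_f=J_g$ is perfect, hence infinite, and disjoint from $B_0$ (point~1 there); so each $h_n$ omits on $D$ the infinitely many points of $J_f$, and by Montel's theorem $\{h_n\}$ is a normal family on $D$. A locally uniformly convergent subsequence $h_{n_k}\to h$ would then have $h$ holomorphic with $h(0)=0$, whence $h_{n_k}'(0)\to h'(0)\in\bC$; but $h_{n_k}'(0)=\lambda^{-n_k}$ and $|\lambda^{-n_k}|\to\infty$ since $0<|\lambda|<1$ --- a contradiction. Hence $B_0$ must contain a critical point of $g$, which by the reduction yields a critical point of $f$ whose forward orbit converges to $\gamma$.

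\emph{Expected difficulty.} The delicate step will be the monodromy in the middle paragraph: making sure the no-critical-point hypothesis really forces \emph{every} branch of $g^{-n}$ fixing $0$ to extend single-valuedly over all of $D$ --- equivalently, that the successive pull-backs $V_n$ of $D$ stay inside $B_0$ and carry no branching. Once that is secured, the chain rule, Montel's theorem, and the elementary estimate $|\lambda^{-n}|\to\infty$ finish the argument with no further work.
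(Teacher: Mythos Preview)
The paper does not supply a proof of this theorem at all: it is quoted as a classical result of Fatou and used as a black box (for instance in the discussion of $N_p$ for $p(z)=z^3-1$ and of the Curry--Garnett--Sullivan phenomenon), so there is nothing in the paper to compare your argument against.

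That said, your argument is correct and is essentially the standard proof (as in Milnor or Beardon). A couple of small points worth making explicit, since you flag the monodromy step as the delicate one: first, the reason $g^n\colon V_n\to D$ has no critical points is that a critical point $w$ of $g^n$ would give a critical point $g^j(w)$ of $g$ for some $0\le j<n$, and you need $g^j(w)\in B_0$ to reach the contradiction --- this follows because $B_0$ is forward invariant under $g$ (the image $g(B_0)$ is connected, contains $0$, and lies in the basin). Second, properness of $g^n|_{V_n}$ is automatic because connected components are closed in $g^{-n}(D)$ and $g^n$ is proper on $\CPo$; together with the absence of branching and the simple connectivity of $D$ this forces $g^n|_{V_n}$ to be a biholomorphism, exactly as you need. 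Finally, in the normality step, the locally uniform limit $h$ could in principle be identically $\infty$, but your observation $h_{n_k}(0)=0$ rules that out and guarantees $h'(0)$ is a finite number, making the contradiction with $|\lambda^{-n_k}|\to\infty$ clean.
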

When $p$ has degree 1 or 2, the set of non-converging points has trivially measure zero: in the first case, $N_p^n(z)$ converges
to the root for all $z\in\bC$; in the second, $J_{N_p}$ is diffeomorphic to a circle and the Fatou set is the disjoint union of two discs,
each of which is the the immediate basin of one of the two roots. In the first non-trivial case, when $p$ has degree 3, it was found
first numerically by Curry, Garnett and Sullivan~\cite{CGS83} that there are such polynomials whose Newton map
$N_p$ has attracting orbits with period larger than 1 (see Fig.~\ref{fig:mps}). This means that, even for such simple Newton maps, there is an
open set of points (hence with measure larger than zero) that does not converge to any root. A simple example of such
polynomials is $q(z)=z^3-2z+2$ (see Fig.~\ref{fig:cjs}).

\medskip
Of course the restriction of complex polynomials with real coefficients to the real line provides examples of dynamics of
{\em real} Newton maps on $\RPo$, so the example above shows that such behavior also takes place in the real case.
It is, therefore, non-trivial and particularly interesting the following result found by B\'ela Barna~\cite{Bar53},
way before the explosion of work on complex holomorphic dynamics:
\begin{theorem}[Barna, 1953]
  Let $p$ be a generic real polynomial of degree $n\geq 4$ without complex roots and denote its roots by $r_1,\dots,r_n$. Then:
  \begin{enumerate}
  \item $F_{N_p}=\cup_{i=1}^n\cF(c_i)$;
  \item $F_{N_p}$ has full Lebesgue measure;
  \item $N_p$ has no attractive $k$-cycles with $k\geq2$;
  \item $N_p$ has repelling $k$-cycles of any order $k\geq2$;
  \item $J_{N_p}$ is equal, modulo a countable set, to a Cantor set $\cE_{N_p}$ of Lebesgue measure zero.
  \end{enumerate}
\end{theorem}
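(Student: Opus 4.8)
\medskip
\noindent\emph{Proof strategy.}
The plan is to show that, for generic $p$ of degree $n\geq4$ with only (hence simple) real roots $r_1<\cdots<r_n$, the rational map $N_p$ is \emph{hyperbolic} and its only attracting cycles are the $r_i$, and then to deduce the five assertions from hyperbolicity together with an explicit symbolic model of the real Julia set.

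\emph{Step 1: structure.}
By Rolle's theorem and genericity, $p'$ has $n-1$ simple zeros $\xi_1<\cdots<\xi_{n-1}$, one in each $(r_i,r_{i+1})$ --- the simple poles of $N_p$ --- and $p''$ has $n-2$ simple zeros interlacing them. The critical points of $N_p$ (zeros of $N_p'=pp''/(p')^2$) are exactly the $n$ superattracting roots $r_i$ and the $n-2$ zeros $\eta_j$ of $p''$, a total of $2n-2$ as required in degree $n$, while $N_p(x)=\tfrac{n-1}{n}x+O(1)$ makes $\infty\in\RPo$ a repelling fixed point. On $(-\infty,r_1)$ and $(r_n,\infty)$ the polynomial $p$ is monotone with fixed concavity (all zeros of $p''$ lie between the extreme roots), so Newton's method converges there monotonically; with a sign analysis of $N_p$ just inside the extreme poles this shows the immediate basins of $r_1,r_n$ are exactly $(-\infty,\xi_1)$ and $(\xi_{n-1},\infty)$, while for $2\le i\le n-1$ the immediate basin $B_i\ni r_i$ is an open subinterval of $(\xi_{i-1},\xi_i)$ --- it cannot reach a pole, where $N_p=\infty\neq r_i$. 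Hence $\Lambda:=\RPo\setminus\bigcup_iB_i$ consists of the point $\infty$ together with $n-1$ compact ``gaps'' $G_1,\dots,G_{n-1}$, each containing exactly one pole; moreover each $G_i$ minus its pole is a union of monotone arcs, and every gap $G_j$ with $j\neq i$ is the homeomorphic image under $N_p$ of exactly one such subarc of $G_i$ (while $N_p(G_i)$ also overlaps $\bigcup_lB_l$) --- this is the Markov structure exploited below.

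\emph{Step 2: the crux, and hyperbolicity.}
The decisive step --- and, I expect, the main obstacle --- is that every free critical point $\eta_j$ has forward orbit converging to a root of $p$. This is exactly where ``only real roots'' is used: for polynomials with complex roots the free critical orbit can be captured by a nontrivial attracting cycle (the Curry--Garnett--Sullivan phenomenon), while here it cannot. I would analyse the graph of $N_p$ on each bounded inter-pole interval $(\xi_j,\xi_{j+1})$ --- it runs from $+\infty$ at $\xi_j^{+}$ to $-\infty$ at $\xi_{j+1}^{-}$, with exactly two critical points, the root $r_{j+1}$ (critical value $r_{j+1}$) and the inflection point $\eta_j$ --- and, using the sign patterns of $p,p',p''$ forced by the interlacing of roots, show that the orbit of $\eta_j$ is pushed monotonically into the immediate basin of a neighbouring root, so $\eta_j\in\bigcup_i\cF(r_i)$. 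Granting this, the closure of the postcritical set of $N_p$ lies in $\bigcup_i\cF(r_i)\subset F_{N_p}$ and is disjoint from $J_{N_p}$, so $N_p$ is hyperbolic. By Theorem~\ref{thm:Fatou} an attracting cycle must absorb a critical orbit, and every critical orbit is now accounted for, so $N_p$ has no attracting $k$-cycle with $k\ge2$ --- item~3. By Sullivan's theorem every Fatou component is preperiodic, and hyperbolicity excludes parabolic basins and rotation domains, so $F_{N_p}=\bigcup_{i=1}^n\cF(r_i)$ --- item~1; equivalently $\RPo\setminus\bigcup_i\cF(r_i)=\mathcal E\cup P$, where $\mathcal E=\{x:N_p^k(x)\in\Lambda\text{ for all }k\ge0\}$ and $P$ is the countable set of points eventually mapped onto a pole.

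\emph{Step 3: measure, Cantor structure, cycles.}
Any $x\in\mathcal E$ lies in $J_{N_p}$ (a Fatou point would lie in some $\cF(r_i)$, impossible when $N_p^k(x)\in\Lambda$ for all $k$), and hyperbolic rational maps expand uniformly on their Julia set, so $N_p$ is uniformly expanding on the compact set $\mathcal E=\bigcap_kN_p^{-k}(\Lambda)$ --- the repeller of the Markov structure of Step 1. A standard bounded-distortion estimate --- at each step a definite proportion of every surviving interval of $N_p^{-k}(\Lambda)$ maps into $\bigcup_lB_l$ --- then forces $\mathrm{Leb}(\mathcal E)=0$, so $F_{N_p}$ has full measure, which is item~2. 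Being closed of measure zero, $\mathcal E$ contains no interval, hence is totally disconnected; its Cantor--Bendixson decomposition is $\mathcal E=\mathcal E^{*}\sqcup C$ with $\mathcal E^{*}$ a Cantor set (nonempty, since $J_{N_p}$ is perfect: Theorem~\ref{thm:JF}) and $C$ countable, so $J_{N_p}=\mathcal E\cup P$ equals $\mathcal E^{*}$ up to a countable set, which is item~5. Finally, the itinerary map conjugates $N_p|_{\mathcal E^{*}}$ to the subshift of finite type on symbols $\{1,\dots,n-1\}$ with transition $i\to j$ allowed iff $j\neq i$; for $n\geq4$ its transition matrix (ones off the diagonal, zero on it) is primitive, hence the subshift is topologically mixing and carries periodic sequences of every period $k\geq2$, each corresponding --- by the uniform expansion --- to a repelling real $k$-cycle of $N_p$ inside $\mathcal E^{*}$, which is item~4. (For $n=3$ that matrix is a transposition, of period $2$, so only even-order cycles arise; this is exactly why the hypothesis $n\geq4$ is required.)
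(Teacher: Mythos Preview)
The paper does not supply a proof of this theorem: it is stated as Barna's 1953 result, followed only by a Remark pointing to Lyubich's later complex-analytic theorems that yield items~1--2 once the behaviour of the critical orbits is known. So there is no in-paper argument to compare against; one can only assess whether your outline would stand on its own.

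Your route --- complexify, establish hyperbolicity, then read everything off --- is precisely the modern shortcut the paper's Remark gestures at, and Steps~1 and~3 are essentially sound: the immediate basins of the extreme roots, the Markov/subshift description of the non-escaping set, the primitivity argument giving cycles of every period when $n\geq4$ (and explaining the failure at $n=3$), and the bounded-distortion route to measure zero are all correct once hyperbolicity is in hand. The genuine gap is Step~2, and you flag it yourself with ``Granting this\dots''. The claim that each free critical point $\eta_j$ is pushed monotonically into a neighbouring immediate basin is not established by the sentence you offer: the sign patterns of $p,p',p''$ forced by interlacing constrain \emph{one} Newton step from $\eta_j$, but they do not control the subsequent orbit, which may land in a distant gap and bounce among the $G_i$ indefinitely for all your argument shows. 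Proving that this cannot happen is exactly the substantive content of Barna's paper --- a rather delicate real-variable analysis of how $N_p$ transports points across the inter-pole intervals --- and, via Theorem~\ref{thm:Fatou}, it is equivalent to item~3 itself. Your outline therefore reduces the theorem to the one step that carries its full difficulty and leaves that step as a sketch. Note also a small circularity: the ``monotone arcs'' claim in Step~1 already presupposes that no $\eta_j$ lies in a gap $G_i$, which is again Step~2.
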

\begin{remark}
  In fact, it is more generally true that the, for any complex polynomial $p$ with all its roots $\{r_1,\dots,r_n\}$
  simple and real, $F_{N_p}=\cup_{i=1}^n\cF(c_i)$ has full Lebesgue measure (\cite{Lyu86}, Thm.~1.27).
  Even more generally, $J_{N_p}$ is a set of Lebesgue measure zero if all critical points of $N_p$ converge to
  attracting, repelling or neutral rational cycles (\cite{Lyu86}, Thm.~1.26). E.g. this last theorem covers all polynomials
  $p_n(z)=z^n-1$, $n=2,3,\dots$,showing that all the $p_n$ are \simple.
\end{remark}
In our knowledge, the only result in literature about $\alpha$-limits of maps on $\RPo$ is that $J_f=\alpha_f(Z_f)$ (see Thm.~\ref{them:dMvS1}).
It is reasonable, though, to believe that this property extends to almost all points of $\RPo$, leading to the following:
\begin{conjecture}
  Every $p$ satisfying the hypothesis of Barna's theorem is a \simple\ map.
\end{conjecture}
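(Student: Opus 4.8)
The plan is to leverage Barna's theorem — which already supplies a clean description of the forward dynamics — and reduce the conjecture to a statement purely about $\alpha$-limits. By Barna, $F_{N_p}=\bigcup_{i=1}^n\cF(c_i)$ has full Lebesgue measure, so condition (i) in the definition of a \simple\ map is immediate with $J=J_{N_p}$. What remains is condition (ii): that $\cB_{N_p}(J_{N_p})$ has full measure, i.e. that $\alpha_{N_p}(x)\subset J_{N_p}$ for almost every $x\in\RPo$. Since Barna also tells us $J_{N_p}$ is, up to a countable set, a Cantor set of measure zero, the whole content of the conjecture is the assertion that for Lebesgue-almost every starting point $x$, every sequence of backward iterates accumulates only on $J_{N_p}$.

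The first step I would take is to understand $N_p^{-1}$ near the fixed points $c_i$. Each root $c_i$ of $p$ is a superattracting fixed point of $N_p$, hence a critical point; locally $N_p$ looks like $z\mapsto c_i+a(z-c_i)^2+\cdots$, so the two local inverse branches near $c_i$ behave like $w\mapsto c_i\pm\sqrt{(w-c_i)/a}$ and are \emph{expanding} there. The heuristic from the Introduction then applies: away from $J_{N_p}$ the map $N_p$ is essentially a contraction toward the $c_i$, so its inverse branches are repelled from the $c_i$ and, one hopes, pushed toward $J_{N_p}$. To make this rigorous on $\RPo$, the natural tool is Theorem~\ref{thm:Bar} (Barnsley, 1988): I would try to verify its hypotheses with $X$ taken to be a suitable closed forward-invariant neighborhood of $J_{N_p}$ obtained by deleting small open neighborhoods of each $c_i$ (and of $\infty$, which is repelling by Barna/Head). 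On such an $X$, $N_p$ is an open map with $N_p(X)\supset X$, so $F(K)=N_p^{-1}(K)\cap X$ would have a repellor $K_0\subset X$ equal to the set of points never leaving $X$; since $J_{N_p}$ is backward invariant (Thm~\ref{them:dMvS1}) and by point 4 of that theorem equals $\alpha_{N_p}(Z_{N_p})$, one identifies $K_0$ with $J_{N_p}$. The measure-theoretic conclusion (ii) would then follow by showing that any $x$ whose orbit under the forward map enters $\cF(c_i)$ — i.e. almost every $x$ — has all of its backward orbits eventually trapped in $X$, hence accumulating on $K_0=J_{N_p}$.

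The main obstacle, and the reason this is only a conjecture, is precisely the gap between the \emph{existence} of backward orbits converging to $J_{N_p}$ and the \emph{almost-everywhere} statement about \emph{all} backward orbits. On $\RPo$ the map $N_p$ is not open at its critical points $c_i$, the number of preimages of a point is not constant, and there is genuine pruning: some backward branches die (a point in $\cF(c_j)$ just below $c_j$ may have \emph{no} real preimage in one of the would-be inverse branches) and some may, a priori, linger near or return toward a $c_i$ through the non-monotone folds of $N_p$. Controlling this requires a distortion/transversality argument along \emph{every} admissible backward itinerary, not merely a single well-chosen one — this is where the one-dimensional real analysis (absence of wandering intervals, the Cantor-set structure of $\cE_{N_p}$, and uniform hyperbolicity of $N_p$ on $J_{N_p}$, which for these polynomials should follow from the Barna/Lyubich picture) must be combined carefully. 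A clean way to organize it would be to prove that the set of $x$ admitting a backward orbit accumulating \emph{outside} $J_{N_p}$ is contained in $\bigcup_i\overline{\alpha_{N_p}^{\mathrm{bad}}(c_i)}$, a countable union of sets each of which is a nowhere-dense null set because it is built from the (measure-zero, nowhere-dense) exceptional set together with countably many smooth preimage curves; but filling in that containment rigorously — in particular ruling out positive-measure families of backward orbits that oscillate indefinitely without ever settling into $X$ — is the hard part, and is exactly what the numerical evidence in this paper is offered to support.
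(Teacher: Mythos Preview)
The paper does not prove this statement: it is posed as Conjecture~1, motivated only by the single known fact $J_{N_p}=\alpha_{N_p}(Z_{N_p})$ from Theorem~\ref{them:dMvS1}, and explicitly left open. There is therefore no proof in the paper against which to compare your proposal.

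That said, your reduction is correct: Barna's theorem already gives condition~(i) in the definition of \simple, so the entire content of the conjecture is condition~(ii), and you have located the genuine difficulty --- passing from ``some backward orbit accumulates on $J_{N_p}$'' (which is what $J_{N_p}=\alpha_{N_p}(Z_{N_p})$ gives) to ``\emph{every} backward orbit of almost every point accumulates only on $J_{N_p}$''. One technical point you pass over too quickly: the critical set $Z_{N_p}$ is not just the roots $c_i$. Since $N_p'=pp''/(p')^2$, the zeros of $p''$ are also critical points of $N_p$, and for a real polynomial with all real roots these interlace with the roots and therefore lie in your set $X$. Hence $N_p$ is \emph{not} open on $X$, and the hypotheses of Theorem~\ref{thm:Bar} fail as you have set things up; any attempt to push the Barnsley-type argument through would have to deal with these interior folds directly, which feeds back into exactly the ``pruning/oscillation'' obstruction you already flag. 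In short, your sketch is a plausible line of attack with honestly identified gaps, but it does not close them --- nor does the paper claim to.
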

About thirty years later, Barna's work was revisited independently in the same year by Saari and Urenko~\cite{SU84},
Wong~\cite{Won84} and Hurley and Martin~\cite{HM84} leading, in particular, to the following important
results:
\begin{theorem}[Wong, 1984]
  A sufficient condition for Barna's theorem to hold is that the polynomial $p$ has no complex root and at least 4 distinct real roots,
  possibly repeated.
\end{theorem}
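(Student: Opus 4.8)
The plan is to reduce the statement to an analysis of the one‑dimensional real dynamics of $N:=N_p$ on $\RPo=\bR\cup\{\infty\}$, using the single structural consequence of the hypothesis ``no complex roots'': all critical points of $N$ are real, and (a property of Newton maps equivalent to $p$ being real‑rooted, which I would verify by a direct computation) $N$ has negative Schwarzian derivative on every interval of $\bR$ lying between consecutive poles. Write $r_1<\cdots<r_m$ for the distinct roots, so $m\ge4$. First I would record the elementary structure of $N$: it has degree $m$; $\infty$ is a repelling fixed point; each $r_i$ is a fixed point, super‑attracting when simple and attracting with multiplier $1-1/m_i$ when of multiplicity $m_i$; the poles of $N$ are the zeros of $p'$ that are not roots of $p$, so by Rolle at least one lies strictly inside each gap $(r_i,r_{i+1})$ and there are at least $m-1\ge3$ of them; the critical points of $N$ are the simple roots of $p$ together with the zeros of $p''$ that are not roots of $p$, all real because $p$ is real‑rooted (Gauss--Lucas); and on the two unbounded components of $\RPo\setminus[r_1,r_m]$ the map $N$ has no poles, is monotone, and (using $p'/p=\sum m_j/(x-r_j)$) sends every point strictly between itself and the nearest endpoint, so those points converge monotonically to $r_1$ or to $r_m$.

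Next I would establish conclusion (3), the absence of attracting or neutral $k$-cycles with $k\ge2$. By Theorem~\ref{thm:Fatou} (or Singer's sharper statement, available thanks to the negative Schwarzian) such a cycle would attract a critical point of $N$; the simple roots of $p$ converge to themselves, so the attracted critical point would have to be a zero $c^{*}$ of $p''$ lying in some gap $(r_i,r_{i+1})$. The core lemma is that every such $c^{*}$ lies in the immediate basin of one of the two roots bounding its gap, hence converges to a root --- a contradiction. To prove it I would analyse the monotone piece of $N$ between a root $r$ of $p$ and the first pole of $N$ beyond it: there $N$ carries a one‑sided neighbourhood of $r$ contractively into itself, the immediate basin of $r$ fills that piece up to a preimage of a pole (or of $\infty$), and $c^{*}$, being the unique critical point of $N$ on such a piece, is forced either to lie in that immediate basin or to have its image there. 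Granting the lemma, all critical points of $N$ converge to the fixed points $r_i$; in particular, by the criterion recalled in the Remark following Barna's theorem, $J_{N_p}$ has Lebesgue measure zero, and $N$ has no non‑repelling cycle other than the $r_i$.

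Then I would assemble the remaining conclusions. Let $G$ be the (finite) union of the immediate basins of the roots and $K_0=\RPo\setminus G$, a finite union of closed intervals; by the lemma $K_0$ contains no critical point of $N$, and by the previous step no non‑repelling periodic point, so the maximal invariant set $\Lambda=\bigcap_{n\ge0}N^{-n}(K_0)$ is a hyperbolic repeller (Ma\~n\'e). Negative Schwarzian together with the Koebe distortion principle then gives uniform expansion with bounded distortion along $\{N^{-n}(K_0)\}$, so $|N^{-n}(K_0)|\to0$ and $\Lambda$ is a nowhere dense, perfect, Lebesgue‑null Cantor set, consistently with the measure‑zero statement above. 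Since $\Lambda$ is exactly the set of points whose forward orbit never enters an immediate basin, $\RPo\setminus\Lambda=\bigsqcup_i\cF(r_i)$ has full measure, giving conclusion (2); a periodic Fatou component other than the $\cF(r_i)$ would be a non‑repelling cycle whose basin is an open subset of $\Lambda$ --- impossible --- and wandering intervals are excluded by negative Schwarzian (alternatively by Theorem~\ref{them:dMvS1}), so $F_{N_p}=\bigsqcup_i\cF(r_i)$, giving conclusion (1); and $J_{N_p}=\partial F_{N_p}$ agrees with $\Lambda$ outside the countable grand orbit of the endpoints of $G$ and of $\infty$, giving conclusion (5). Finally, for conclusion (4) I would describe the transition structure of $N$ on the connected components (``cores'') of $K_0$: each core is pinned against a pole, where $N$ blows up monotonically, hence is carried homeomorphically onto a half‑line that covers several other cores; a case analysis shows that when $m\ge4$ the resulting transition matrix is primitive, so $N|_{\Lambda}$ contains a subsystem conjugate to a mixing subshift of finite type, which has periodic points of every period, all repelling because $\Lambda$ is a hyperbolic repeller. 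This is the one place where $m\ge4$ is genuinely used: for two or three distinct roots the transition matrix degenerates (empty for $m=2$, generically a single transposition for $m=3$), so $\Lambda$ collapses to a countable set or to one $2$-cycle and conclusions (4)--(5) fail.

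I expect the only real obstacle to be the core lemma of the second paragraph --- the precise control of the immediate basins and of the monotone arcs of $N$ between a root and the adjacent pole that forces every zero of $p''$ into an immediate basin of a root, equivalently the absence of attracting cycles. Everything else follows rather formally from one‑dimensional real‑dynamics technology once that local/geometric fact is in hand; and it is exactly that fact which breaks down when $p$ has a complex root, as the example $z^{3}-2z+2$ --- whose zero $0$ of $p''$ lands on an attracting $2$-cycle --- shows.
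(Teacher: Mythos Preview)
The paper does not contain a proof of this theorem: it is stated purely as a citation of Wong~\cite{Won84}, alongside the companion results of Saari--Urenko and Hurley--Martin, with no argument given. There is therefore no ``paper's own proof'' to compare your proposal against.

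That said, your outline is a coherent modern route to the result and is broadly in the spirit of how these one-dimensional Newton results are proved. Two caveats are worth flagging. First, the blanket assertion that $N_p$ has negative Schwarzian derivative on each interval between consecutive poles whenever $p$ is real-rooted is not something you can take for granted; you correctly note it ``would be verified by a direct computation,'' but that computation is not automatic, and Wong's original argument (as well as Barna's) proceeds by more hands-on interval bookkeeping rather than by invoking Singer's theorem. If the Schwarzian claim fails at some points, your appeal to Singer/Ma\~n\'e and to Koebe distortion would need to be replaced by the direct expansion estimates that Barna and Wong actually use. Second, you are right that the crux is your ``core lemma'' forcing every zero of $p''$ in a gap $(r_i,r_{i+1})$ into an immediate basin; your sketch of it (monotone arcs between a root and the adjacent pole, location of the unique critical point on such an arc) is plausible but is exactly where the work lies, and your proposal does not yet supply the inequality or geometric argument that pins it down. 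Everything downstream --- the hyperbolic Cantor repeller, the full-measure union of basins, and the subshift giving cycles of every period for $m\ge4$ --- follows cleanly once that lemma and the requisite expansion are in hand, so your identification of the obstacle is accurate.
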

\begin{theorem}[Saari and Urenko, 1984]
  Let $p$ be a generic polynomial of degree $n\geq3$, $A_p$ the collection of all bounded intervals in $\bR\setminus Z_{p}$
  and $\cA_p$ the set of all sequences of elements of $A_p$. Then the restriction of $N_p$ to the Cantor set $\cE_{N_p}$ is semi-conjugate to the
  one-sided shift map $S$ on $\cA_f$, namely there is a surjective homomorphism $h_p:\cE_{N_p}\to A_p$ such that $T\circ h_p=h_p\circ N_p$.
\end{theorem}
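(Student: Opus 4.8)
The plan is to exhibit the semiconjugacy as the \emph{itinerary map} of $N_p$ relative to the partition of $\bR$ cut out by $Z_p$, and then to check the three properties that make such a map a semiconjugacy onto the one-sided shift $(\cA_p,S)$: that it intertwines $N_p$ with $S$, that it is continuous, and that it is surjective. The only real dynamical input needed is the description of $\cE_{N_p}$ contained in Barna's theorem: $\cE_{N_p}$ is, modulo a countable set, the set of points whose forward orbit never enters an immediate basin of a root of $p$ — equivalently, never converges to a root and never approaches the repelling fixed point $\infty$.

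First I would fix the alphabet: let $\{I_\alpha\}_{\alpha\in A_p}$ be the bounded components of $\bR\setminus Z_p$ (when all roots of $p$ are real, the intervals delimited by consecutive roots). For $x\in\cE_{N_p}$ Barna's description forces the whole forward orbit $\{N_p^{k}(x)\}_{k\ge0}$ to remain inside $\bigcup_{\alpha\in A_p}I_\alpha$, bounded away from $Z_p$ and from $\infty$. After removing from $\cE_{N_p}$ the countable set of points whose orbit ever lands on $Z_p$ itself — precisely the countable set by which Barna's Cantor set differs from $J_{N_p}$ — the itinerary $h_p(x)=(\alpha_0,\alpha_1,\dots)$ prescribed by $N_p^{k}(x)\in I_{\alpha_k}$ is unambiguous, and $S\circ h_p=h_p\circ N_p$ is immediate since $N_p^{k}(N_p(x))=N_p^{k+1}(x)$. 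Continuity is then essentially formal: the cylinder $\{x\in\cE_{N_p}:h_p(x)\text{ begins with }\alpha_0\cdots\alpha_k\}$ equals $\cE_{N_p}\cap\bigcap_{j=0}^{k}N_p^{-j}(I_{\alpha_j})$, an intersection of preimages of open intervals under the continuous $N_p$, hence relatively open in $\cE_{N_p}$, so two nearby points of $\cE_{N_p}$ automatically share an arbitrarily long initial segment of itinerary.

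The substantial step is surjectivity. Given an arbitrary sequence $(\alpha_0,\alpha_1,\dots)\in\cA_p$ I would produce a realizing point inside $\bigcap_{k\ge0}C_k$, where $C_k=\overline{I_{\alpha_0}}\cap N_p^{-1}(\overline{I_{\alpha_1}})\cap\cdots\cap N_p^{-k}(\overline{I_{\alpha_k}})$ is a decreasing sequence of compact sets; the intersection is nonempty as soon as each $C_k\ne\emptyset$, and the latter reduces by induction to the \emph{covering property} $N_p(I_\alpha)\supseteq\overline{I_\beta}$ for all $\alpha,\beta\in A_p$. This is where the hypotheses on $p$ enter, rather than any soft argument. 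On a bounded component $I_\alpha=(r_j,r_{j+1})$ the map $N_p$ fixes both endpoints while $p'$ has a zero $c_j$ inside, a pole of $N_p$; from the expansion $N_p(z)=-\tfrac{p(c_j)}{p''(c_j)}\cdot\tfrac{1}{z-c_j}+O(1)$ near $c_j$ one reads that $N_p$ blows up to $+\infty$ on one side of $c_j$ and to $-\infty$ on the other, and the genericity of $p$ — through the interlacing of the roots of $p$ with those of $p'$ and $p''$ — makes these two infinite branches point in opposite directions; since $N_p$ also takes values arbitrarily close to $r_j$ and to $r_{j+1}$ near the respective endpoints, connectedness gives $N_p(I_\alpha)=\bR$. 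As this transition is \emph{full}, $\bigcap_kC_k\ne\emptyset$ for every target sequence and $h_p$ covers the whole shift space rather than a proper subshift. One further point: the realizing point must lie in $\cE_{N_p}$, not drift into an immediate basin. I would arrange this by replacing, in the construction of the $C_k$, each interval $I_\alpha$ by its compact $\cE_{N_p}$-portion — deleting the immediate-basin neighborhoods of the two endpoints — and checking that the covering property survives this truncation, a statement about how $N_p$ transports the basin boundaries that is again part of Barna's analysis.

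I expect the covering property, together with its refinement to the $\cE_{N_p}$-portions, to be the only genuine obstacle: the intertwining relation is free, continuity is topological bookkeeping, and the nested-intersection argument is standard. What really uses the hypotheses is that each $I_\alpha$ maps across every $\overline{I_\beta}$ — that the two infinite branches of $N_p$ on $I_\alpha$ are oppositely oriented — and this rests on the precise arrangement of the roots of $p$, $p'$ and $p''$ guaranteed by genericity. A secondary, purely technical chore is the careful accounting of the countable set of orbits eventually meeting $Z_p$, on which the itinerary is ill-defined; this is absorbed into the ``modulo a countable set'' already present in Barna's description of $\cE_{N_p}$.
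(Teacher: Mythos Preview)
The paper does not prove this statement; it is quoted from Saari and Urenko's 1984 paper as background, so there is no in-paper argument to compare against. Your overall plan---define the itinerary map relative to the partition $A_p$, verify the intertwining and continuity formally, and obtain surjectivity from a covering property via nested compact intersections---is the standard route and essentially how Saari and Urenko proceed.

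There is, however, a genuine error in your identification of the partition. By the paper's own definition (Definition~3), $Z_f$ is the set where $D_xf$ is degenerate; applied to the scalar polynomial $p$ this gives $Z_p=\{x:p'(x)=0\}$, the \emph{critical points} of $p$, i.e.\ the poles of $N_p$. The bounded components of $\bR\setminus Z_p$ are therefore the $n-2$ intervals $(c_j,c_{j+1})$ between consecutive critical points, each containing exactly one root. You instead work with intervals $(r_j,r_{j+1})$ between consecutive \emph{roots}, with a pole in the interior. This is not a harmless relabeling: with your partition the covering property $N_p(I_\alpha)\supset\overline{I_\beta}$ fails. Already for $p(x)=x^3-x$ one has $N_p(x)=2x^3/(3x^2-1)$, and a direct sign check shows that $N_p$ maps $(-1,0)\setminus\{-1/\sqrt3\}$ onto $(-\infty,-1)\cup(0,+\infty)$, so the root-interval $(-1,0)$ does not cover itself and the itinerary map cannot be onto the full shift on that alphabet.

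With the correct partition the covering is both true and simpler than your version: on $(c_j,c_{j+1})$ the map $N_p$ is continuous (no interior pole), fixes the unique interior root $r$, and since $p$ changes sign at $r$ while $p'$ keeps a constant sign on the whole interval, $-p/p'$ diverges to \emph{opposite} infinities as $x\to c_j^{+}$ and $x\to c_{j+1}^{-}$; the intermediate value theorem then gives $N_p((c_j,c_{j+1}))=\bR$, so every $\overline{I_\beta}$ is covered. Your nested-intersection and continuity paragraphs carry over verbatim, and the refinement to the $\cE_{N_p}$-portion becomes more natural in this picture: one excises the immediate basin of the single interior root rather than neighborhoods of two endpoint roots.
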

\begin{theorem}[Hurley and Martin, 1984]
  Let $p$ be a generic polynomial of degree $n\geq3$. Then $N_p$ has at least $(n-2)^k$ $k$-cycles for each $k\geq1$ and the
  topological entropy of $N_p$ is at least $\log(n-2)$.
\end{theorem}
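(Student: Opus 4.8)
\medskip\noindent\textbf{Proof sketch.} The entropy bound is essentially a corollary of the Saari--Urenko theorem quoted above. Throughout assume, as is implicit in this circle of results, that $p$ has $n$ distinct real roots $r_1<\dots<r_n$; then $p'$ has $n-1$ distinct real roots $\xi_1<\dots<\xi_{n-1}$ (Rolle), the set $Z_p$ consists of these $n-1$ points, and $\bR\setminus Z_p$ has exactly $n-2$ bounded components, i.e. $|A_p|=n-2$. By the Saari--Urenko theorem, $N_p$ restricted to the invariant Cantor set $\cE_{N_p}$ has the one-sided full shift $S$ on $n-2$ symbols as a topological factor; since topological entropy does not increase under factor maps, $h_{\mathrm{top}}(S)=\log(n-2)$, and $\cE_{N_p}$ is a forward-invariant compact subset of $\RPo$ (so that $h_{\mathrm{top}}(N_p)\ge h_{\mathrm{top}}(N_p|_{\cE_{N_p}})$), we get $h_{\mathrm{top}}(N_p)\ge\log(n-2)$.

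The periodic-orbit estimate requires the symbolic dynamics a little more explicitly, since a factor map onto a shift need not lift periodic orbits. The $n-1$ poles $\xi_i$ of $N_p$ cut $\bR$ into two unbounded arcs and $n-2$ bounded arcs $I_j=(\xi_j,\xi_{j+1})$, $j=1,\dots,n-2$; by Rolle $r_i<\xi_i<r_{i+1}$, so each $I_j$ contains exactly one root, namely $r_{j+1}$. Two elementary facts do the work. First, $p'$ has a simple zero at each $\xi_i$, hence constant sign on $I_j$, while $p$ is strictly monotone on $I_j$ with its only zero at $r_{j+1}$; thus $p(\xi_j)$ and $p(\xi_{j+1})$ have opposite signs, and plugging this into $N_p=z-p/p'$ yields, for \emph{every} bounded arc, $N_p(\xi_j^+)=+\infty$ and $N_p(\xi_{j+1}^-)=-\infty$. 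Second, $N_p(r_{j+1})=r_{j+1}$ since the root is simple. Hence $N_p$ restricted to $I_j$ runs continuously from $+\infty$ down through the interior value $r_{j+1}\in I_j$ and on to $-\infty$, so $N_p(I_j)=\bR\supset\overline{I_k}$ for \emph{all} pairs $(j,k)$; moreover, $N_p$ having only finitely many critical points on $\bR$ (the $n$ roots of $p$ and the $n-2$ inflection points of $p$), one may choose inside each $I_j$ a compact subinterval $J_{j,k}$, lying between consecutive critical points of $N_p$, on which $N_p$ is a homeomorphism onto $\overline{I_k}$ sending endpoints to endpoints. In short, the transition graph on the symbol set $\{1,\dots,n-2\}$ is the complete directed graph with all loops present.

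Now fix $k\ge1$ and a word $w=(w_0,\dots,w_{k-1})$ over $\{1,\dots,n-2\}$. Composing these homeomorphisms, $N_p^{k}$ maps the compact interval $A_w=J_{w_0,w_1}\cap N_p^{-1}(J_{w_1,w_2})\cap\dots\cap N_p^{-(k-1)}(J_{w_{k-1},w_0})\subset I_{w_0}$ homeomorphically onto $\overline{I_{w_0}}\supset A_w$; since it carries the two endpoints of $A_w$ to those of the larger interval $\overline{I_{w_0}}$, the intermediate value theorem gives a fixed point $x_w\in A_w$. This $x_w$ is a periodic point of $N_p$ of period dividing $k$ whose itinerary is the periodic word $w\,w\,w\dots$; distinct words produce distinct itineraries (the $I_j$ are pairwise disjoint), hence distinct points. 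So $N_p$ has at least $(n-2)^k$ periodic points of period dividing $k$, which is the asserted bound on $k$-cycles (grouped into orbits, still at least $(n-2)^k/k$). Running the same nested-interval construction over one-sided infinite sequences also yields the Saari--Urenko semiconjugacy, re-proving the entropy bound.

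The step that is elementary but laborious is the passage from ``$N_p(I_j)=\bR$'' to the existence of genuine \emph{monotone} branches $J_{j,k}$ realizing \emph{every} transition: one must follow the critical points of $N_p$ between consecutive poles and check that for each target $\overline{I_k}$ there is a subinterval of $I_j$ free of them on which $N_p$ surjects onto $\overline{I_k}$, the diagonal case $k=j$ --- where the branch must straddle the parabolic ``bump'' of $N_p$ at $r_{j+1}$ --- being the most delicate. This is precisely the monotonicity bookkeeping carried out by Barna and revisited by Wong, Saari--Urenko and Hurley--Martin. The companion fact it rests on, that the two unbounded arcs together with the repelling fixed point $\infty$ (near which $N_p(z)\sim\frac{n-1}{n}z$) funnel any orbit meeting them into the basin of $r_1$ or $r_n$, so that the non-converging set is confined to $I_1\cup\dots\cup I_{n-2}$, is needed for Barna's sharper conclusions but not for these lower bounds; and it is here that the hypothesis $n\ge4$ is what makes $\log(n-2)$ nontrivial, the cases $n\le3$ collapsing to $h_{\mathrm{top}}\ge0$.
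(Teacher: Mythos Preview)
The paper does not contain a proof of this theorem; it merely states the result with a citation to Hurley--Martin~\cite{HM84}, so there is no ``paper's own proof'' to compare against. That said, your sketch is essentially the original Hurley--Martin argument: the $n-1$ poles $\xi_i$ of $N_p$ cut out $n-2$ bounded arcs $I_j$, one checks that $N_p(I_j)\supset\overline{I_k}$ for every pair $(j,k)$, and the standard covering/itinerary lemma then produces $(n-2)^k$ fixed points of $N_p^k$ and the entropy bound $\log(n-2)$.

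Two small remarks. First, you work harder than necessary in insisting on \emph{monotone} branches $J_{j,k}$: for the periodic-point count and the entropy bound it suffices that each $I_j$ contains a compact subinterval mapped \emph{onto} $\overline{I_k}$ (the usual ``$I_j$ $f$-covers $I_k$'' relation), and this follows immediately from your computation $N_p(\xi_j^+)=+\infty$, $N_p(\xi_{j+1}^-)=-\infty$ together with continuity on $I_j$. The monotone-branch refinement is what one needs for the stronger Saari--Urenko conjugacy statement, not for the lower bounds here, so your ``laborious step'' paragraph is addressing a stronger goal than the theorem requires. Second, note that the paper's Definition~2 reserves ``$k$-cycle'' for a periodic \emph{orbit} of (minimal) period $k$, whereas your argument counts fixed points of $N_p^k$; you flag this yourself, and the usual reading of the Hurley--Martin bound is indeed $(n-2)^k$ periodic \emph{points} of period dividing $k$.
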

\begin{remark}
  The theorem by Saari and Urenko actually holds for the much larger class of ``polynomial-like'' functions
  and similarly happens for the Hurley and Martin theorem (see~\cite{SU84} and~\cite{HM84} for details).
\end{remark}
%
Despite the depth and interest of these results for Newton maps on the real line, {\bf no attempts to multidimensional generalizations of Barna's theorem
  appear in literature. In the next section we present numerical evidence showing that a similar statement might hold in higher dimension,
  or at least on the real plane.}

%
\section{Newton maps on $\Rt$}
%
The Newton method extends naturally to much more general settings than the real and complex lines,
from finite-dimensional linear spaces~\cite{Hub15} to Banach spaces~\cite{Kan49,Deu11}
to Riemannian manifolds~\cite{Fer02}. Moreover, since Newton's map $f\mapsto N_f$ leaves invariant
the subset of {\em complex} maps of $\bR^{2n}$ into itself, in each setting one can consider separately
the real and the complex case.

In the present article we are only interested in the finite-dimensional real case:
\begin{definition}
  Let $f:\K^n\to\K^n$, $\K=\bR$ or $\bC$, be a polynomial map.
  We call {\em Newton map} associated to $f$ the rational map $N_f:\KPn\to\KPn$ defined by
  $$
  N_f(x)=x-D_xf^{-1}\left(f(x)\right).
  $$
\end{definition}
Moreover, we will limit our discussion to the case $n=2$. Notice that {\bf very little, compared to the 1-dimensional case,
is available in literature about Newton's method in $\bR^2$ or $\bC^2$.} The complex case has been recently
investigated in a few articles by Hubbard and Papadopol~\cite{HSS01} and by Hubbard's pupil Roeder~\cite{Roe05,Roe07},
where they classify and study of the case of {\em quadratic polynomials}. As expected, technical difficulties
are quite more challenging than in dimension one. The real case was considered, to the author's knowledge, only by
Peitgen, Prufer and Schmitt~\cite{PR86,PPS88,PPS89} about thirty years ago, mostly from the point of view
of identifying the best definition of {\em Julia set} in the real multidimensional context, and about twenty years
ago by Miller and Yorke~\cite{MY00}, that studied the size of attracting basins.

{\bf In this work we are rather interested to a somehow transversal point of view, namely for which real polynomial maps
  of the plane into itself the corresponding Newton maps are \simple.}

To begin with, from a real point of view, complex differentiable maps $f:\bC\to\bC$ are just real differentiable maps
$f_\bR:\Rt\to\Rt$ whose Jacobian $Df_\bR$ commutes with the (imaginary unit) matrix
$$
J=\begin{pmatrix}
\phantom{-}0&1\\
-1&0\\
\end{pmatrix}
$$
or, equivalently, for which $f_\bR$ we have that 
$$
Df_\bR=\begin{pmatrix}\phantom{-}\alpha&\beta\\ -\beta&\alpha\\ \end{pmatrix}
$$
for some $\alpha,\beta\in C^0(\Rt)$. Hence all results of complex holomorphic dynamics apply
to such functions and, more generally, we can state the following proposition:
\begin{proposition}
  Let $f:\Rt\to\Rt$ a smooth map that is {\em intrinsically complex}, namely complex with respect
  to some almost complex structure of the plane.
  Then exactly one of the following holds:
  \begin{enumerate}
  \item $f$ enjoys all topological properties that hold in holomorphic dynamics 
    (in particular, its Julia set is never empty and it is equal to the boundary of any of the basins of its attracting cycles).
  \item $f$ is conjugate via a diffeomorphism to a rotation about a fixed point.
  \item $\{f^n\}$ converges to a constant function in the Withney topology.
  \end{enumerate}
  Note that, in the last two cases, $J_f$ is empty.
\end{proposition}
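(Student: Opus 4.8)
The plan is to use the Uniformization Theorem to conjugate $f$ to a genuine one-variable holomorphic map, and then to read off the trichotomy from the standard classification of holomorphic self-maps of $\bC$ and of the disc.

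\emph{Reduction to a holomorphic model.} In real dimension two an almost complex structure $J$ is automatically integrable: locally one finds isothermal coordinates by solving a Beltrami equation, obtaining a complex coordinate in which $J$ becomes standard, with regularity inherited from that of $J$. Hence $(\Rt,J)$ is a Riemann surface $S$; being diffeomorphic to the plane, $S$ is simply connected and non-compact, so by the Uniformization Theorem it is biholomorphic, via a global diffeomorphism $\phi$ of $\Rt$ onto $\bC$ or onto the unit disc $\bD$. Since $f$ is $J$-holomorphic and $\phi$ intertwines $J$ with the standard structure, $g:=\phi\circ f\circ\phi^{-1}$ is an honest holomorphic self-map of $\bC$ (a polynomial, a transcendental entire map, or a constant) or of $\bD$; because conjugation by the diffeomorphism $\phi$ preserves every topological and conjugacy-invariant dynamical datum (Julia and Fatou sets, basins, attracting cycles, normality), it suffices to classify $g$.

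\emph{The three cases.} Suppose first the model is $\bC$. If $g$ is a polynomial of degree $\ge2$, extend $\phi$ by sending the single end of $\Rt$ to $\infty$; then $g$ becomes a rational map of $\CPo$ of degree $\ge2$ and Theorem~\ref{thm:JF} applies verbatim --- $J_g$ is a non-empty perfect invariant set equal to $\partial\cF_g(\gamma)$ for every attracting cycle $\gamma$ --- and transferring back by $\phi$ gives alternative (1); the same holds, via the classical transcendental analogues of Theorem~\ref{thm:JF}, when $g$ is transcendental entire. If $g$ is affine of degree $\le1$ (in particular if it is constant), the Mobius classification applies: either $g$ is conjugate to a (possibly trivial) rotation about a fixed point, so $f$ is diffeomorphically conjugate to a rotation about a fixed point --- alternative (2) --- or else $\{g^n\}$ converges, on $\CPo$ and away from at most one (repelling) point, to a constant, namely a finite fixed point or $\infty$ --- alternative (3). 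Suppose instead the model is $\bD$, so $g:\bD\to\bD$: if $g$ is an elliptic automorphism it is conjugate to a Euclidean rotation of $\bD$ --- alternative (2); otherwise (a non-automorphism, or a parabolic or hyperbolic automorphism) the Denjoy--Wolff theorem produces a point $p\in\overline{\bD}$ with $g^n\to p$ locally uniformly, so $\{f^n\}$ converges to a constant --- alternative (3). In alternatives (2) and (3) the family $\{g^n\}$ is normal on the whole model (equicontinuous for a rotation, convergent off at most one point otherwise), whence $J_f$ is empty.

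\emph{Exclusivity, and the main obstacle.} The three alternatives are mutually exclusive: (1) forces $J_f\ne\emptyset$ while (2) and (3) give $J_f=\emptyset$, and (2) is separated from (3) because under a non-trivial rotation (or the identity) the sequence $\{f^n\}$ does not converge, whereas in (3) it does. I expect the real difficulty to lie not in any single computation but in the reduction step --- making rigorous the passage from ``$f$ is $J$-holomorphic for a smooth $J$'' to ``$f$ is diffeomorphically conjugate to an honest holomorphic self-map of $\bC$ or $\bD$'': integrability with the correct regularity, globality of the uniformizing diffeomorphism, and the elementary fact that a $C^1$ map obeying the Cauchy--Riemann equations is holomorphic --- and then the bookkeeping of the borderline cases (affine maps with $|a|\ne1$, parabolic and hyperbolic automorphisms of $\bD$, the identity, constants) so that each is honestly absorbed into alternative (2) or (3). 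Once that is done, alternatives (1), (2), (3) follow respectively from Theorem~\ref{thm:JF} (and its transcendental analogue), the Mobius classification, and the Denjoy--Wolff theorem.
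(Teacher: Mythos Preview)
Your argument follows the same skeleton as the paper's: integrability of almost complex structures in real dimension two, the Uniformization Theorem to reduce to a holomorphic self-map of $\bC$ or $\bD$, and then the Denjoy--Wolff theorem in the disc case. You are in fact more careful than the paper, which lumps the entire $\bC$ case into alternative~(1) without separating out the affine and transcendental subcases, and which (apparently by a slip) has the disc and plane cases interchanged in its prose; your explicit treatment of degree-one entire maps via the M\"obius classification and your remark on the transcendental analogue of Theorem~\ref{thm:JF} fill gaps that the paper leaves implicit.
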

\begin{proof}
  This is an immediate consequence of a few important results: the integrability of all almost complex structures in dimension 2
  (since, by purely dimensional reasons, the Nijenhuis tensor is identically zero), the Uniformization Theorem and the Denjoy-Wolff Theorem
  (see~\cite{Mil06} for more details about the last two). By the first, every almost complex structure gives rise to a complex structure,
  so that every almost complex map, namely every map $f$ that preserves the almost complex structure, is a complex map with respect
  to the corresponding complex structure.
  By the second, this complex structure must be diffeomorphic to one of the following two inequivalent ones: either the standard one
  on the unit open disc or the standard one on the whole plane. In the first case, $f$ is smoothly conjugated to a standard complex
  function and therefore all topological results of holomorphic dynamics also apply to it. In the second case, $f$ is conjugated to a
  holomorphic map of the unit disc in itself and, by the Denjoy-Wolff Theorem, this means that it is either conjugated to a
  (hyperbolic) rotation or its iterates converge, uniformly on compact sets, to a constant function.
\end{proof}
{\bf Since the Newton map is not natural with respect to diffeomorphisms, namely $N_{\psi^*f}\neq \psi^* N_f$,
  where $\psi^* f=\psi^{-1}f\psi$ and similarly for $N_f$, the proposition above does not apply, in general,
  to Newton maps of intrinsically complex maps.}
\begin{example}
  \label{ex:sa}
  In case of the (standard) complex map $f(z)=z^2-1$, namely $N_f(z)=(z^2+1)/(2z)$, it was shown already in late eighteen hundreds independently by
  E. Schr\"oder~\cite{Sch70,Sch71} and A. Cayley~\cite{Cay79a,Cay79b} that $J_{N_f}\subset\CPo$ is a circle and
  there are exactly two (connected) basins of attraction, corresponding to the two roots $\pm1$. The (real) Julia set in $\RPt$
  of the corresponding map $f(x,y)=(x^2-y^2-1,2xy)$ is, in turn, the wedge of two circles, since the point at infinity of $\CPo$ expands
  to a whole circle in $\RPt$ and this circle is left invariant by $N_{f}$. Written in homogeneous coordinates, the real version of this Newton
  map reads
  $$
  N_f([x:y:z]) = [x (x^2 + y^2+z^2): y (x^2 + y^2-z^2):2 z (x^2 + y^2)].
  $$
  In the projective chart $y=1$, the circle at infinity becomes the $x$ axis and the restriction of $N_f$ to it is the identity map.
  A direct calculation shows that the Jacobian of $N_f$ at each point of the $x$ axis is diagonal with eigenvalue 2 in the $z$
  direction, corresponding to the fact that the point at infinity of a complex map is always repelling.

  Under the diffeomorphism $\psi(x,y)=(x,y+x^2)$, $f$ transforms into
  $$
  \psi^*f(x,y) = (x^2 - (y + x^2)^2 - 1,2 x (y + x^2) - (x^2 - (y + x^2)^2 - 1)^2)\,
  $$
  which is a complex map with respect to the almost complex structure
  $$
  J(x,y) = -2x\,\partial_x\otimes dx -\partial_x\otimes dy + (1+4x^2)\,\partial_y\otimes dx +2x\,\partial_y\otimes dy.
  $$
  Clearly $f_\psi=\psi^*f$ and $f$ enjoy the same dynamical and topological properties but, on the contrary, $N_f$ and $N_{f_\psi}$ turn out to
  be very different. For instance, the map $N_{f_\psi}$ has only one fixed point at infinity, the point $[0:1:0]$, and this point is actually
  superattracting (see Fig~\ref{fig:mps}), something that could not happen to a complex Newton map.
\end{example}
As already pointed out by Yorke {\sl et al.} in~\cite{MGOY85}, complex maps (as well as intrinsically complex ones) are very special
among real maps and it is not to be expected that their asymptotic behavior is shared by general real maps. Obvious algebraic
differences are that standard generic complex polynomial maps of {\em complex degree} $n$ are {\sl balanced}, in the sense
that both of its components are real polynomials of the same degree $n$, and have $n$ distinct roots while, seen
as real polynomial maps\footnote{By {\em real degree} of a polynomial map on the plane we mean the product of the degrees of its components},
they have degree $n^2$ and a generic real map of this type
can have up to $n^2$ distinct roots. Moreover, Newton maps of general polynomials maps on the plane have {\em points of indeterminacy},
namely points where they are necessarily undefined; this last difference, though, is not essential because, by a standard result in algebraic geometry,
all such points can be eliminated by a finite sequence of blow-ups. 
From the dynamical point of view, as pointed out by Peitgen et al. in~\cite{PPS89},
{\bf unlike in the complex case, points at infinity can be attracting for real Newton maps on the plane}, even in case
of Newton maps of {\em intrinsically} complex ones (e.g. see the example above). In particular, the circle at infinity is not
necessarily contained inside the Julia set of a real Newton map.

In fact, it is expected that the dynamics of general {\em real} maps on the plane 
be more complicated and diverse than the one of holomorphic maps, even just because of the presence of {\em hyperbolic} fixed points.
{\bf It is therefore natural to ask whether, besides intrinsically complex ones, there are other classes of real
  polynomial (and, more generally, rational) maps on the plane whose behavior is comparable, if not simpler, to the one
  of holomorphic maps on the complex line.}
%
%
    
Based on the numerical results we present below and motivated by Barna's Theorem for the 1-dimensional case,
we pose the following conjectures.
\begin{definition}
  We say that a point $p$ of the Julia set $J_F$ of a rational map $F:\Rt\to\Rt$ is {\em regular} if there is a neighborhood
  $U$ of $p$ such that $J_F\cap U$ is a connected 1-dimensional submanifold and $U$ contains points from two different basins.  
\end{definition}
\begin{conjecture}
  Let $f:\Rt\to\Rt$ be a generic polynomial map of degree $n\geq3$.
  Then there is some non-empty open subset $A\subset f(\RPt)$ such that $\alpha_{N_f}(x)$ is equal to the set of non-regular points
  of the boundary of $J_{N_f}$ for all $x\in A$.
\end{conjecture}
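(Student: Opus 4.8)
\medskip
\noindent\textbf{Proof proposal.} The plan would be to split the statement into a ``forward'' input and a ``backward'' conclusion. The forward input is the companion \emph{simple dynamics} picture for the plane: for a generic polynomial map $f$ of degree $n\geq 3$ one first resolves the finitely many points of indeterminacy of $N_f$ by a finite sequence of blow--ups, obtaining a rational self--map of a smooth compact surface that is a morphism off a divisor, and then shows that the roots $c_1,\dots,c_n$ of $f$ are the only attractors (no attracting cycle of period $\geq 2$, no exotic attractor), that $\bigcup_i\cF_{N_f}(c_i)$ has full Lebesgue measure, and that $J_{N_f}$ is a null set. The absence of attracting cycles of period $\geq 2$ is the planar analogue of the corresponding clause of Barna's theorem and should follow from Fatou's Theorem~\ref{thm:Fatou} together with a transversality count in the finite--dimensional space of degree--$n$ maps: for generic $f$, no component of the critical curve $Z_{N_f}$ has a forward orbit accumulating on a non--root cycle. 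I would take this forward picture as an input --- it is itself conjectural in the real plane --- and concentrate on the backward conclusion, using throughout that $\alpha_{N_f}(x)$ is closed and \emph{fully} $N_f$--invariant: if $y\in\alpha_{N_f}(x)$ then $N_f(y)\in\alpha_{N_f}(x)$ and, by compactness, some preimage of $y$ lies in $\alpha_{N_f}(x)$ as well.

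Next I would isolate the set appearing in the statement. Write $J^{\mathrm{reg}}$ for the (relatively open) set of regular points of $J_{N_f}$ and $J^{\mathrm{nr}}=J_{N_f}\setminus J^{\mathrm{reg}}$ for its non--regular part, which is therefore compact; since $J_{N_f}$ has empty interior, $J^{\mathrm{nr}}$ is exactly the set of non--regular points of $\partial J_{N_f}$. One first checks that $J^{\mathrm{nr}}$ is forward invariant: off the thin curve $Z_{N_f}$ the map $N_f$ is a local diffeomorphism and carries regular points to regular points, and the finitely many orbits meeting $Z_{N_f}$ are handled separately. The substantive claim is that $J^{\mathrm{nr}}$ is the \emph{chaotic core}: it contains the closure of all repelling --- indeed source --- cycles of $N_f$, of which there are infinitely many by a planar version of the Hurley--Martin count, so $J^{\mathrm{nr}}\neq\emptyset$ for $n\geq 3$; and it equals the support of a measure of maximal entropy $\mu_{\max}$ of $N_f$ (in a sense still to be made precise for real maps of the plane). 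Complementarily, $J^{\mathrm{reg}}$ is \emph{laminar} and \emph{backward--transient}: the iterated preimages of any arc of $J^{\mathrm{reg}}$ leave every compact subset of $J^{\mathrm{reg}}$ and accumulate on $J^{\mathrm{nr}}$. This is where genericity of $f$ is really used.

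Granting this, the two inclusions go as follows. For $\alpha_{N_f}(x)\subset J^{\mathrm{nr}}$: for $x$ outside a null set every backward orbit escapes all the basins --- superattractivity of the $c_i$ drives backward orbits out of any neighbourhood of $c_i$ --- so $\alpha_{N_f}(x)\subset J_{N_f}$; writing $\alpha_{N_f}(x)=\bigcap_{m\geq 0}\overline{\bigcup_{k\geq m}N_f^{-k}(x)}$ and splitting the preimage tree into the part near $J^{\mathrm{reg}}$ and the part near $J^{\mathrm{nr}}$, backward--transience forces the first to accumulate on $J^{\mathrm{nr}}$ while the second does so trivially, whence $\alpha_{N_f}(x)\subset J^{\mathrm{nr}}$. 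For the reverse inclusion I would prove the real planar analogue of Theorem~\ref{thm:Jf} and of the Freire--Lopes--Ma\~n\'e and Lyubich equidistribution of preimages: since generically $\omega_{N_f}(F_{N_f})=\{c_1,\dots,c_n\}$, if $A$ is a small open set whose closure avoids the $c_i$ then the suitably normalised counting measures on $N_f^{-m}(z)$ converge weakly to $\mu_{\max}$ for each $z\in A$, so $N_f^{-m}(z)$ accumulates on $\mathrm{supp}\,\mu_{\max}=J^{\mathrm{nr}}$ in the Hausdorff metric and $\alpha_{N_f}(x)\supset J^{\mathrm{nr}}$ for every $x\in A$. Combining the two inclusions gives the statement with $A$ this open set; alternatively one can route the backward half through Barnsley's Theorem~\ref{thm:Bar}, applied on the blown--up surface to a small closed neighbourhood $X$ of $J^{\mathrm{nr}}$, to identify $\lim F^m(X)$ with $J^{\mathrm{nr}}$ as the repellor of points that never leave $X$.

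The hard part is everything hidden behind the words ``analogue of''. The forward picture has no real--plane substitute for Sullivan's non--wandering--domain theorem or for the hyperbolicity and renormalisation machinery available on the complex line, and even the blow--up resolution, classical as it is, may create new periodic behaviour on the exceptional divisor and on the circle at infinity --- which, unlike in holomorphic dynamics, can be attracting --- that must be excluded for generic $f$. The backward half needs a real two--dimensional Freire--Lopes--Ma\~n\'e--Lyubich theorem --- existence, uniqueness and full support on $J^{\mathrm{nr}}$ of a measure of maximal entropy --- which in turn requires controlling the postcritical set of $N_f$ on the resolved surface; this is the genuinely open ingredient, since it is exactly what would make $\alpha$--limits computable. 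And the regular/non--regular dichotomy of the second paragraph --- giving ``generic'' a meaning under which $J^{\mathrm{reg}}$ really is laminar and backward--transient, so that no piece of it survives in $\alpha_{N_f}(x)$ --- is the delicate heart of the conjecture, and I would expect it to be the main obstacle.
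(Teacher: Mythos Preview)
The statement you are addressing is posed in the paper as a \emph{conjecture}, not a theorem: the paper offers no proof whatsoever, only numerical evidence (the figures in Section~4 and the accompanying commentary). There is therefore no ``paper's own proof'' against which to compare your attempt. What the paper does is exhibit pictures of basins and of numerically computed $\alpha$-limits for a handful of explicit polynomial maps, observe that the $\alpha$-limit pictures visually match the non-regular part of the basin boundaries, and on that basis formulate the conjecture. Nothing more.

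Your proposal is, accordingly, not a proof either --- and to your credit you say so repeatedly. It is a research outline that assembles the right analogies (Barna, Fatou's critical-orbit theorem, Freire--Lopes--Ma\~n\'e/Lyubich equidistribution, Barnsley's repellor theorem) and correctly flags where each of them is currently unavailable in the real two-dimensional setting: no non-wandering-domain theorem, no measure-of-maximal-entropy machinery with the needed support properties, no control of the postcritical set after blow-up, and no rigorous regular/non-regular dichotomy for $J_{N_f}$. These are precisely the obstructions the paper alludes to when it says that ``very little, compared to the 1-dimensional case, is available in literature'' and when it restricts itself to numerical evidence. So the honest summary is: the paper does not prove the statement, your write-up does not prove it either, and the gaps you identify in your last paragraph are genuine and, as far as the paper is concerned, open.
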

%
%
%
\begin{conjecture}
  Let $f:\Rt\to\Rt$ be a polynomial map of degree $n\geq3$ with $n$ distinct real roots $\{c_i\}$. Then:
  %
  \begin{enumerate}
  \item $J_{N_f}$ is countable union of wedge sums of countable number of circles and of Cantor sets of circles of measure zero;
  \item $F_{N_f}$ has no wandering domains;
  \item the union of the basins of attraction $\cF_{N_f}(c_i)$ has full Lebesgue measure;
  \item every neighborhood of any point of $J_{N_f}$ contains points from at least two distinct basins of attractions;
  \item unlike the holomorphic case:
    \begin{enumerate}
    \item basins of attractions are not necessarily simply connected;
    \item immediate basins of attraction are not necessarily unbounded;
    \item $J_{N_f}$ can have interior points without being equal to the whole $\RPt$.
    \end{enumerate}
  \end{enumerate}
  In particular, $N_f$ is weakly \simple.
\end{conjecture}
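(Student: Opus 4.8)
As this is a conjecture, what follows is a strategy rather than a proof. The plan is to reduce the concluding ``weakly \simple'' assertion to clauses (2) and (3), to obtain (1) and (4) as structural byproducts, and to settle (5) by explicit examples. Granting (3), condition (i) in the definition of a \simple\ map holds with $J=J_{N_f}$, and a \emph{weak} form of condition (ii) --- namely that $\cB_{N_f}(J_{N_f})$ contains a non-empty open set --- should follow from a Barnsley-type argument in the spirit of Theorems~\ref{thm:Jf} and~\ref{thm:Bar}: the iterated $N_f$-preimages of a small ball disjoint from the finitely many limit sets of Fatou components must accumulate on $J_{N_f}$. The three templates I would lean on are Barna's theorem with the refinements of Wong, Saari--Urenko and Hurley--Martin (for the dynamics along invariant real lines), the proposition on intrinsically complex maps together with Theorem~\ref{thm:JNp} (for whatever holomorphic rigidity survives), and the one-dimensional non-wandering-domain theorem of Martens--de Melo--van Strien (as the model for clause (2)).

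First I would pin down the invariant geometry. The $c_i$ are superattracting fixed points, and the candidate invariant curves are the lines of the arrangement determined by the roots together with the line at infinity of $\RPt$; after resolving the finitely many points of indeterminacy by blow-ups (harmless, as noted in the text), $N_f$ restricts to each such line as a one-dimensional real Newton-type map, to which Barna's and Wong's theorems apply, yielding on the line a Cantor set of exceptional points of measure zero and repelling $k$-cycles of every order $k\geq2$. Clause (1) should then follow by showing that these one-dimensional Cantor sets thicken transversally into Cantor sets of circles, while the line at infinity and its iterated preimages --- which by the Peitgen--Prufer--Schmitt phenomenon and Example~\ref{ex:sa} need not lie in $J_{N_f}$ --- contribute the wedge-of-circles pieces; transverse control of this thickening is the delicate geometric point. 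Clause (4) is the planar analogue of point~9 of Theorem~\ref{thm:JF}, and I would deduce it from the fact that along each invariant line $J_{N_f}$ is the common boundary of the root basins, propagating this by the absence of wandering domains.

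The heart of the matter is clause (3), and with it (2); here the plan mirrors the one-dimensional proof. (a) Control the forward orbit of the critical set $Z_{N_f}$ and invoke Fatou's Theorem~\ref{thm:Fatou} to exclude attracting $k$-cycles with $k\geq2$ --- as in Barna's theorem this ought to follow from real-rootedness alone (the planar counterpart of the fact that $p$ real-rooted forces the critical points of $N_p$ to be real), so that every periodic Fatou component is a root basin. (b) Prove a planar non-wandering-domain theorem along the lines of Martens--de Melo--van Strien, so that every Fatou component eventually lands in a root basin. (c) Show $J_{N_f}$ has Lebesgue measure zero. Step (c) is where I expect the real obstruction: in one dimension the measure-zero statement rests on the absence of wandering intervals and on the real a priori bounds of de Melo--van Strien and Lyubich (\cite{Lyu86}, Thms.~1.26--1.27, quoted after Barna's theorem), and there is at present no two-dimensional analogue of that machinery --- even for \emph{complex} Newton maps on $\bC^2$ the corresponding question is open. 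A complete argument would therefore require either a bounded-geometry theory tailored to these planar Newton maps (exploiting heavily that $f$ has only real roots, hence that the essential critical dynamics is one-dimensional) or a soft argument confining $J_{N_f}$ to a measure-zero lamination.

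Clause (5), by contrast, is within reach now, by explicit examples: modifying the complex structure as in Figure~\ref{fig:mps} already yields non-simply-connected basins for (5a); the map $\psi^*f$ of Example~\ref{ex:sa} has a bounded --- indeed superattracting --- immediate basin at infinity, giving (5b); and a suitable small perturbation produces a Julia set with non-empty interior distinct from all of $\RPt$, giving (5c). Finally, once (3) is in hand the concluding sentence is immediate: with $J=J_{N_f}$, condition (i) is exactly (3), and condition (ii) holds on the non-empty open backward-invariant neighbourhood of a repelling cycle of $N_f$ on the real line arrangement (which exists by Barna--Wong applied to an invariant line), so $N_f$ is weakly \simple.
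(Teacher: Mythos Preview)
The statement is a \emph{conjecture}; the paper gives no proof at all, only the numerical evidence collected in Section~4 (Figures~\ref{fig:q1}--\ref{fig:misc}). There is therefore no argument in the paper to compare yours against, and you are right to present a strategy rather than a proof.

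Two substantive issues with the strategy itself, however. First, its backbone is an arrangement of $N_f$-invariant real lines through the roots, along which you would run Barna--Wong and Saari--Urenko; but nothing in the paper, and no general fact, guarantees that such lines exist for a generic real polynomial map $f:\Rt\to\Rt$. The only invariant lines the paper actually identifies are the ``ghost lines'' attached to pairs of \emph{complex}-conjugate roots, which arise precisely when the maximal-real-roots hypothesis fails. Without this one-dimensional skeleton your reduction to Barna-type dynamics has nothing to stand on, and with it clauses (1), (4) and the ``weakly \simple'' conclusion lose their proposed source. Second, every example you propose for clause~(5) lies outside the conjecture's hypotheses: the intrinsically complex maps of Figure~\ref{fig:mps} and the map $\psi^*f$ of Example~\ref{ex:sa} have real degree $n^2$ with only $n$ real roots, and the basin you invoke for (5b) is a basin at infinity, not the immediate basin of a root $c_i$. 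The paper's own evidence for (5a) and (5b) comes instead from the genuinely maximally-rooted maps of Figures~\ref{fig:c1}(middle) and~\ref{fig:c2}(top); its only evidence for (5c) --- the cyan region of Figure~\ref{fig:q1}(bottom) --- is for a map with \emph{fewer} than the maximal number of roots, so (5c) is not something one can expect to certify by an explicit example staying within the conjecture's scope.
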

\section{Numerical Results}
In Figures~\ref{fig:q1} to~\ref{fig:misc} we show some numerical results, supporting the conjectures above, on the
$\omega$- and $\alpha$-limits of points under iterations of real Newton maps associated to polynomials maps
of various degrees in two variables.

Every row (with the exception of Fig.~\ref{fig:misc} and the middle row in Fig.~\ref{fig:q1}) shows, next to each other, the basins of attraction
of a Newton map (left), with a different color associated to each attractor, and the  $\alpha-$limit of a point under that
Newton map (right) in black and white. The striking resemblance between each $\alpha-$limit and the geometry
of the corresponding basins of attraction, regardless of the number of real solutions, supports Conjecture~2.

Figures~\ref{fig:q1}(top),\ref{fig:c1}(top and middle) and~\ref{fig:c2}(top and middle) show the basins of attraction of Newton
maps corresponding to real polynomial maps with maximal number of real solutions (respectively 4, 6, 6, 9 and 9).
These pictures strongly suggest the following facts:
\begin{enumerate}
\item regular points of the Julia set cannot be reached via $\alpha-$limits (Fig.~\ref{fig:c1}(top));
\item every neighborhood of every point of the Julia set contains points from at least two basins (Fig.~\ref{fig:c3}(bottom)
  suggests that this is not the case when the number of real roots is not maximal);
\item boundaries between basins of attractions are smooth, except at countably many nodal points;
\item basins of attraction are not necessarily simply connected (Fig.~\ref{fig:c1}(middle));
\item immediate basins are not necessarily unbounded (Fig.~\ref{fig:c2}(top)).
\end{enumerate}
%

Figures~\ref{fig:q1}(bottom),\ref{fig:c1}(bottom),\ref{fig:c2}(bottom) and~\ref{fig:c3}(all) show the basins of attraction
of Newton maps corresponding to polynomials with fewer roots than maximal.
Although we know from the intrinsically holomorphic case that, for some polynomials, basins of attraction can satisfy
the same properties of those with maximal number of roots (e.g. see~\ref{fig:c2}(bottom)), numerics strongly suggest
that this is not always the case.

The fourth order polynomial map $g$ corresponding to Fig.~\ref{fig:q1}(bottom) has only two real roots but its Newton map
$N_g$ has three basins of attraction:
the green and the red ones correspond to its two roots while the cyan one corresponds to some more complicated attractor.
This attractor, possibly a Cantor invariant set, lies on an invariant line of the Newton map, namely the intersection
with the real plane $Im(z)=Im(w)=0$ of the complex line in $\Bbb C^2$ passing through the two complex solutions
of $g$. We call such lines {\em ghost lines}. Points in the cyan basin show high sensitivity to initial conditions
and so belong to $J_{N_g}$ rather than $F_{N_g}$.
This suggests that, in the real case, the Julia set can have a non-empty interior without being necessarily the whole $\RPt$
(as it happens, instead, for intrinsically holomorphic maps).

In Fig.\ref{fig:misc} (top, left) we show the main elements of the dynamics of $N_g$, namely: roots of $g$ (red), indeterminacy
point (blue), invariant line through the two roots (light blue), invariant ghost line (light brown), the hyperbola of points sent
to infinity (light blue) with its 1st (red) and 2nd (brown) counterimages and the first 500 points (green) of the orbit of a random
point in the cyan basin of attraction.

The $\alpha$-limit of points in some non-empty open set seems to be equal to the boundary of the Julia set,
{\bf suggesting a more suitable split of $\RPt$ in case of real maps: $\RPt=A_{N_f}\sqcup R_{N_f}$, where
  $A_{N_f}$ is the union of the Fatou set with {\em all} basins of attraction and $R_{N_f}$ its complement.}
Numerics suggest that, with this definition, the set $R_{N_f}$, as for $J_{N_f}$ in the intrinsically holomorphic case,
has no interior points, possibly even when $f$ has no real roots at all, and, for some non-empty open set,
the $\alpha$-limits of points is equal to its non-regular points.
Observe that, since in the holomorphic case Newton maps are always non-chaotic on their basins of attraction,
this split coincides with the split in the Fatou and Julia sets in the intrinsically holomorphic case.

In Figures~\ref{fig:c1}(bottom) and~\ref{fig:c3}(top) the basins of attraction are intertwined in such a way to
suggest a {\em Cantor set of circles} structure with non-zero measure for the corresponding Julia set.
Following numerically the evolution of the $\omega$-limits in one-parametric families close to the bifurcation
point where a couple of real roots disappear, we observed that usually the basins of attraction of the disappeared
roots get replaced by the basin of attraction of a Cantor set lying in some neighborhood of an invariant ghost line
(see the middle and bottom rows of Fig.~\ref{fig:misc}) and then the size of this basin usually decreases in favor
of the basins of the remaining real roots.

In Fig.\ref{fig:c3}(middle) we show the two basins of the Newton map of a map $g$ of degree 9 with a single real root.
In this case the basin of attraction of the only root (in red) is bounded and connected (but not simply connected) while the
other one is the basin of attraction of a Cantor set (in cyan) in some neighborhood of the union of the four
ghost lines corresponding to the four pairs of mutually 
conjugated pairs of complex solutions.
The $\alpha$-limit seems to be equal to the difference between the Julia set and the basin of attraction of the Cantor set.

Finally, in Fig.\ref{fig:c3}(bottom) we show the basin of attraction (in red) of the Newton map of a map $h$
of degree 9 with a single root and its Julia set. Even in this case we can find points whose $\alpha$-limit is
equal to the Julia set. 

\begin{figure}
  \centering
  \begin{tabular}{cc}                                                                                                                                                                  
    \includegraphics[width=5.6cm]{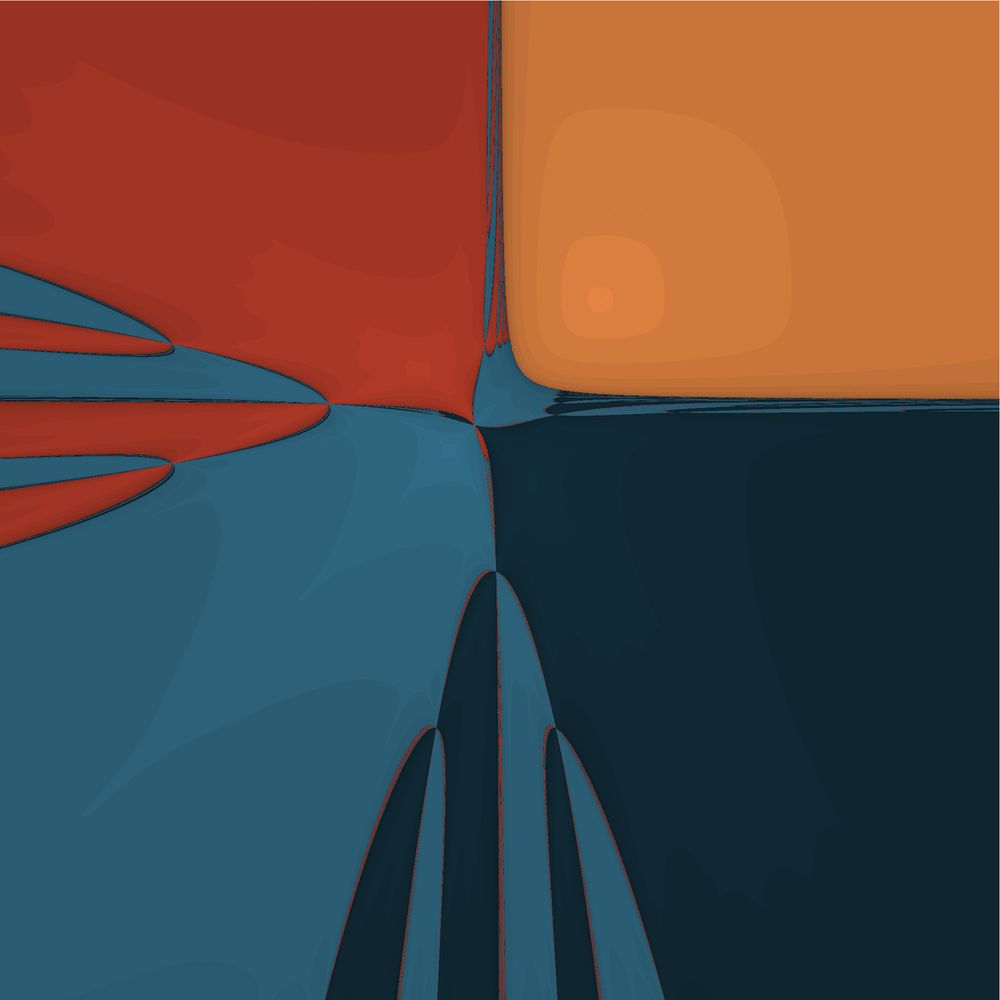}&\includegraphics[width=5.6cm]{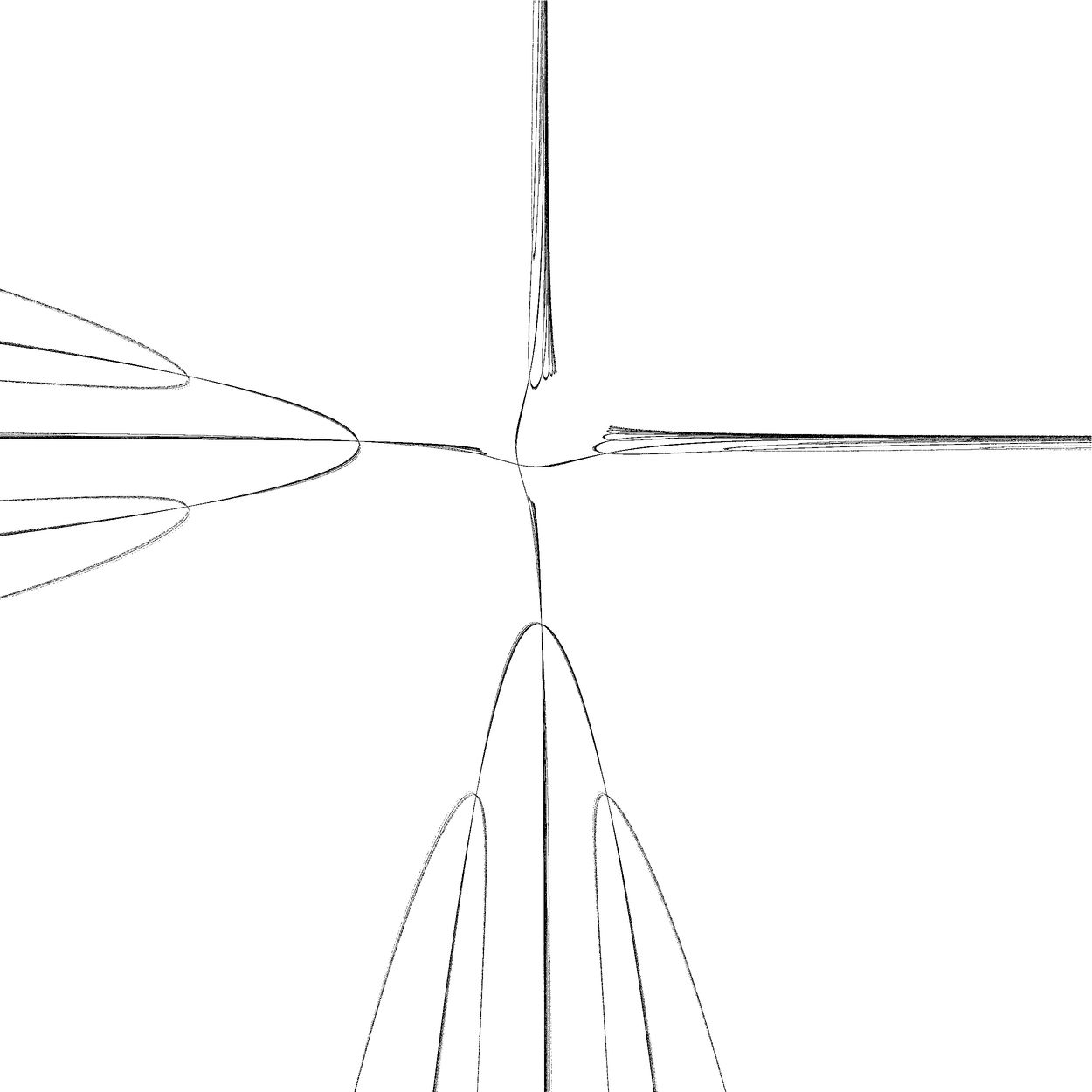}\\
    \includegraphics[width=5.6cm]{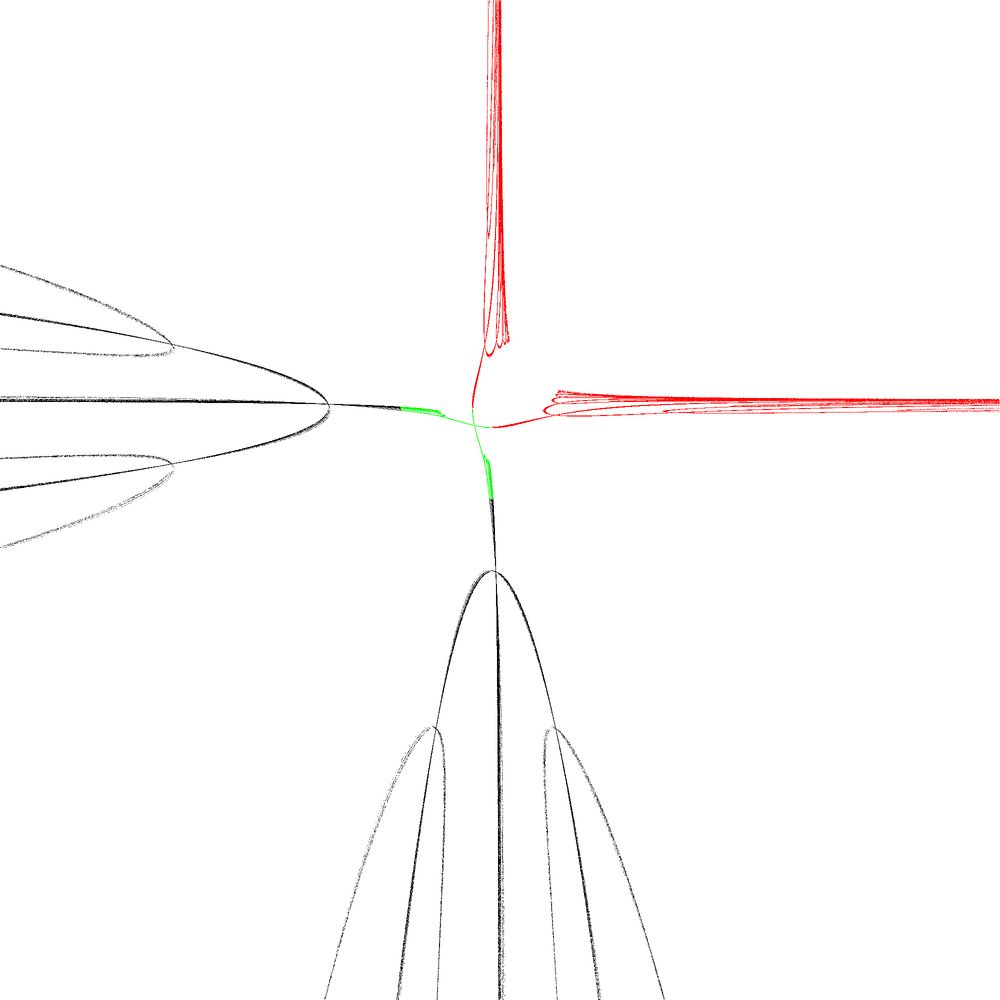}&\includegraphics[width=5.6cm]{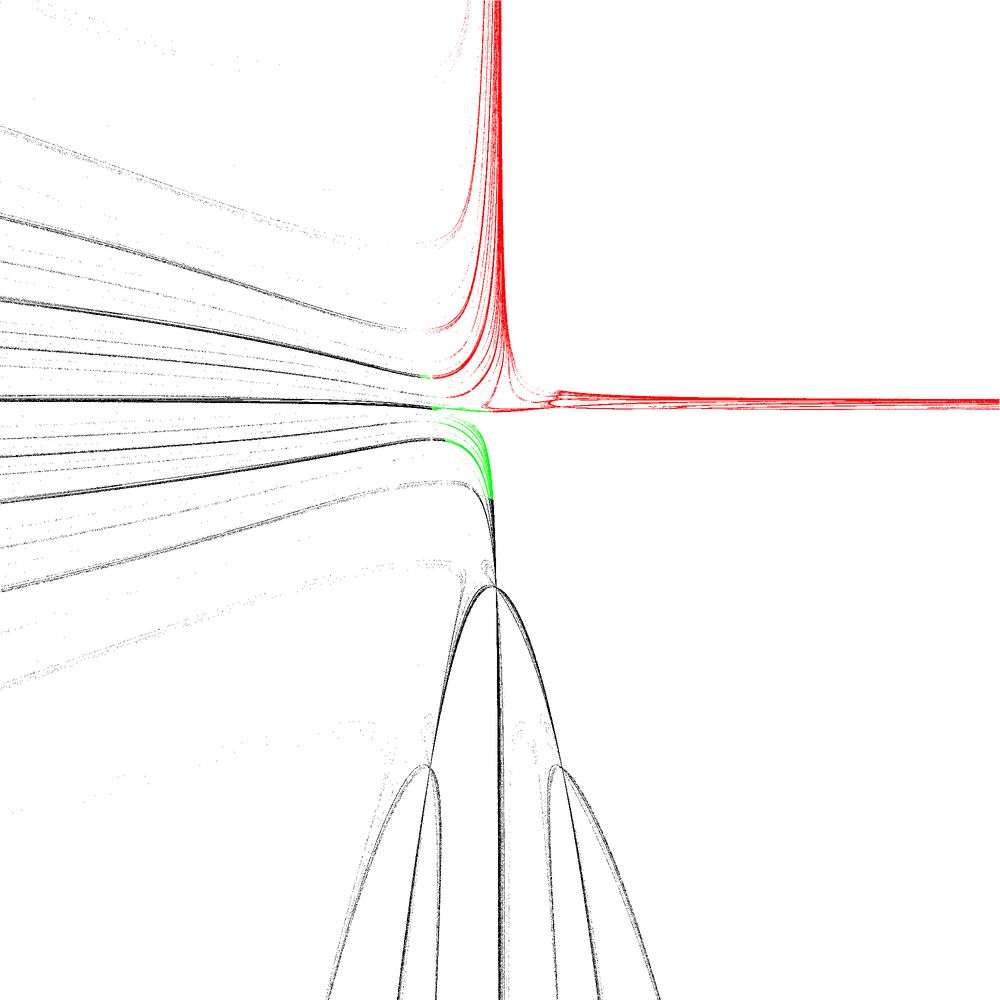}\\
    \includegraphics[width=5.6cm]{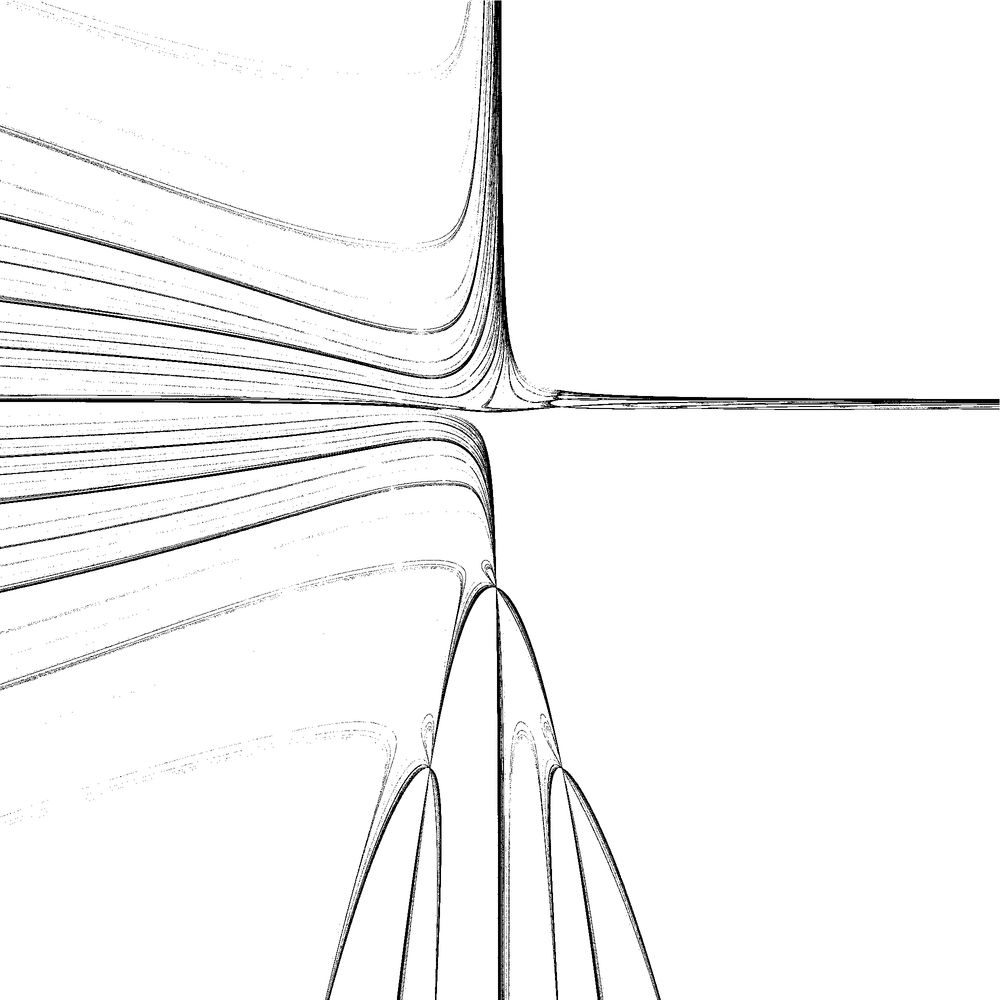}&\includegraphics[width=5.6cm]{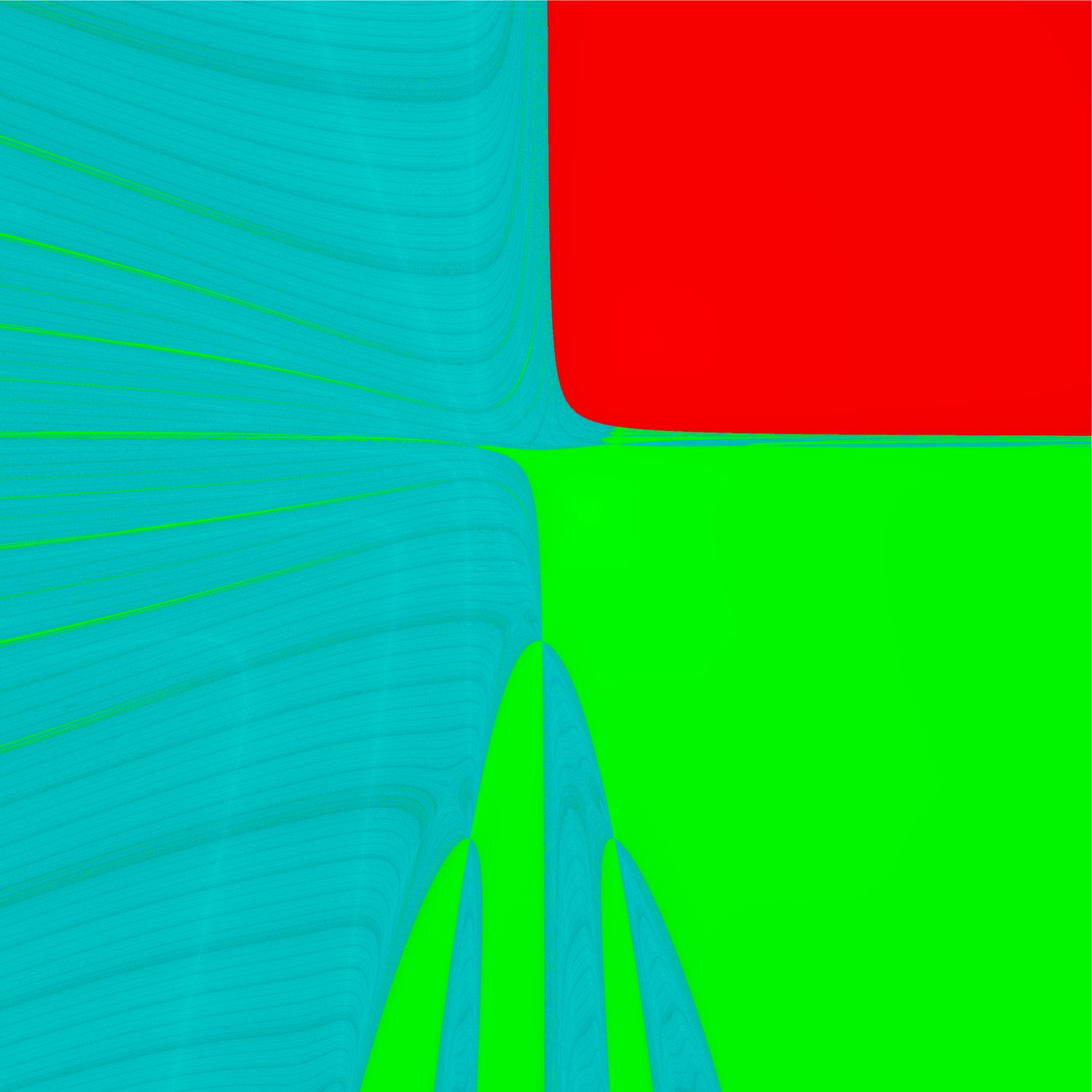}\\
  \end{tabular}
  \caption{%
    \em\small
    $\omega$-limits (in color, each color corresponding to a different basin of attraction) and $\alpha$-limits (in black and white)
    for the Newton maps of the polynomial maps $f(x,y)=(y-x^2,x+2-(y-2)^2)$ (first row) and $g(x,y)=f(x,y)-(0,1)$ (last row).
  }
  \label{fig:q1}
\end{figure}
\begin{figure}
  \centering
  \begin{tabular}{cc}                                                                                                                                                                  
    \includegraphics[width=5.6cm]{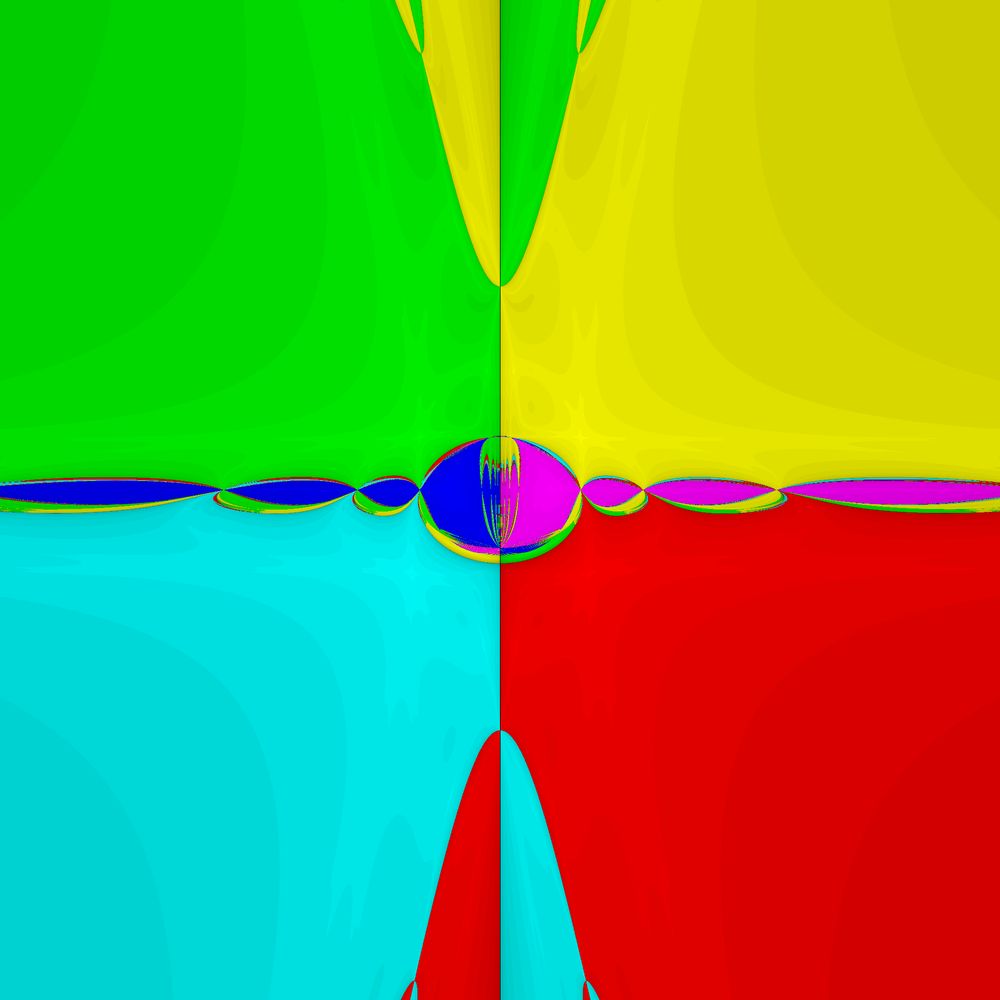}&\includegraphics[width=5.6cm]{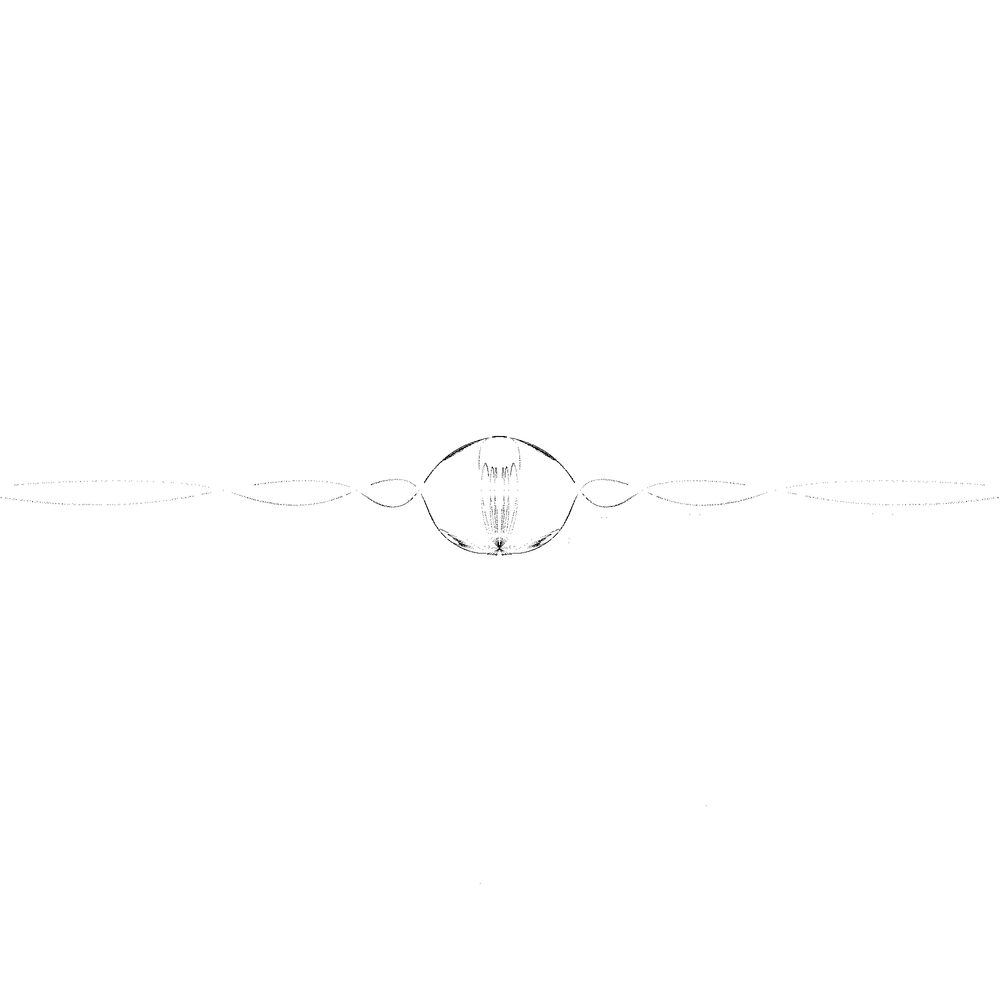}\\
    \includegraphics[width=5.6cm]{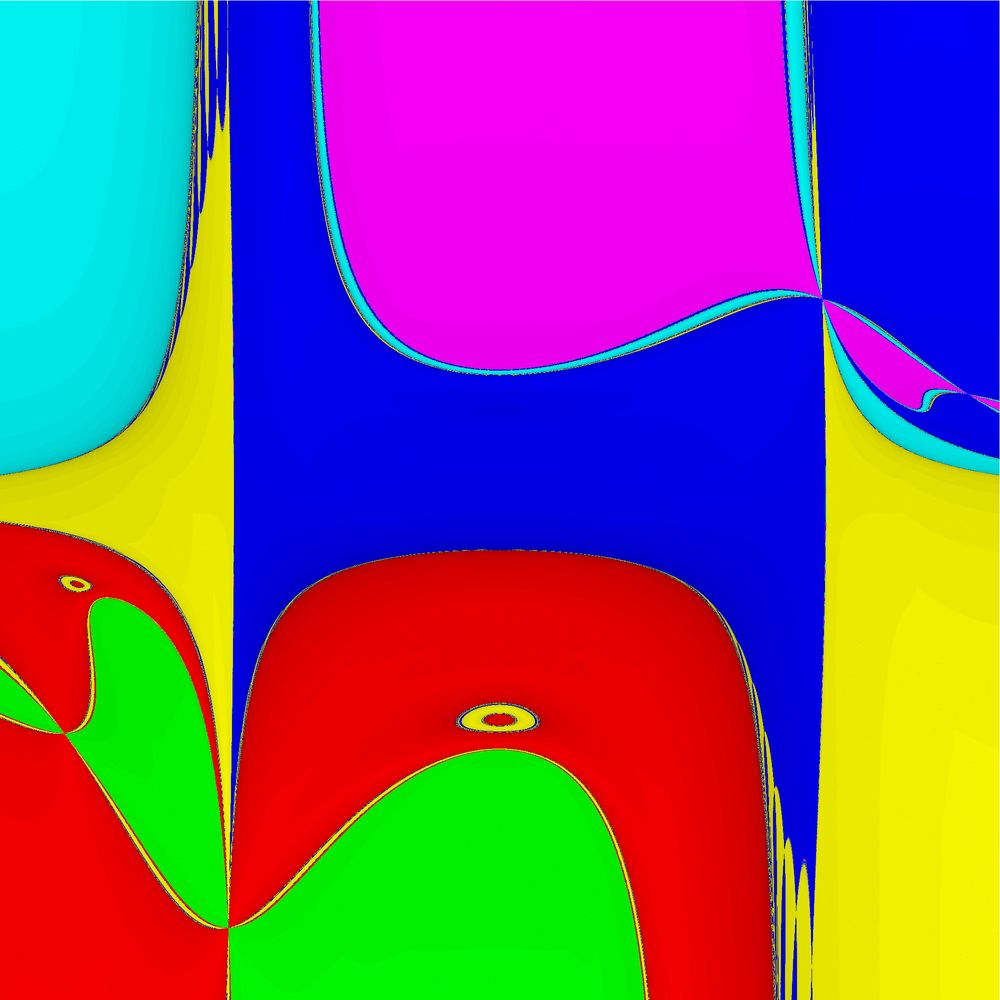}&\includegraphics[width=5.6cm]{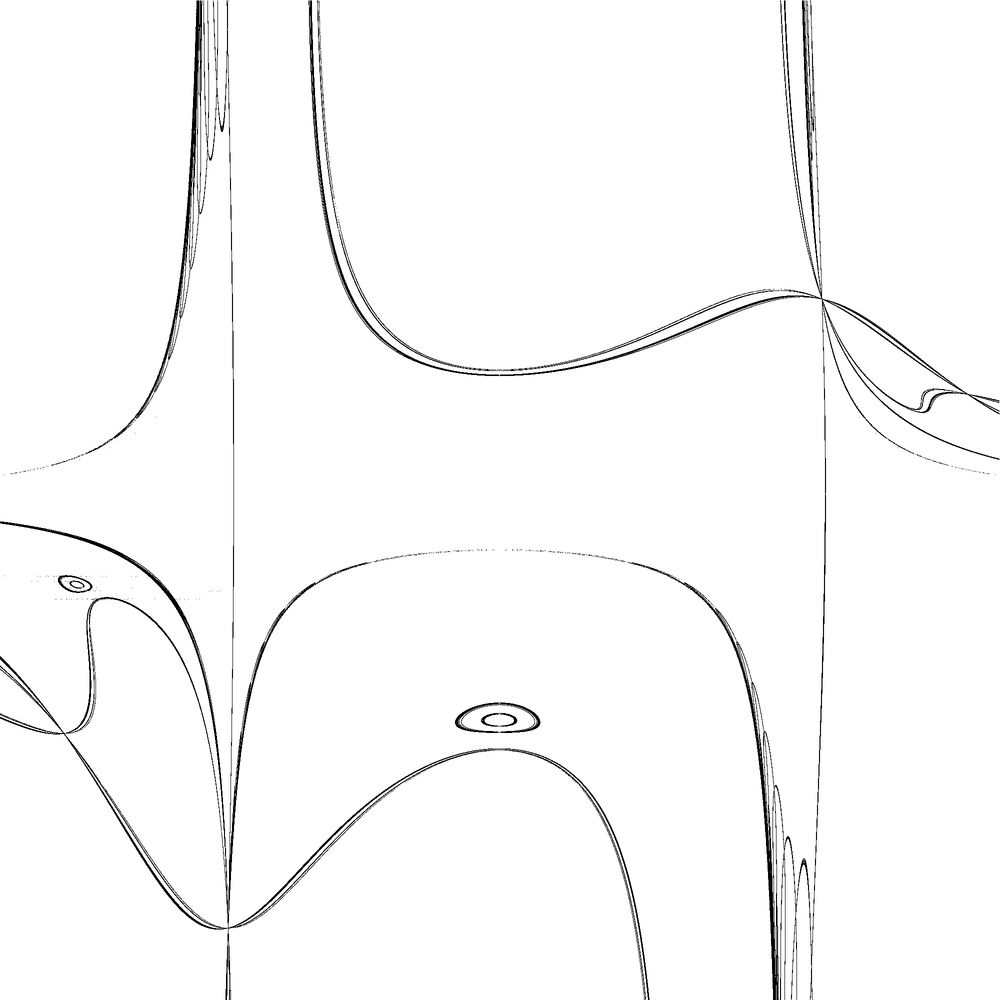}\\
    \includegraphics[width=5.6cm]{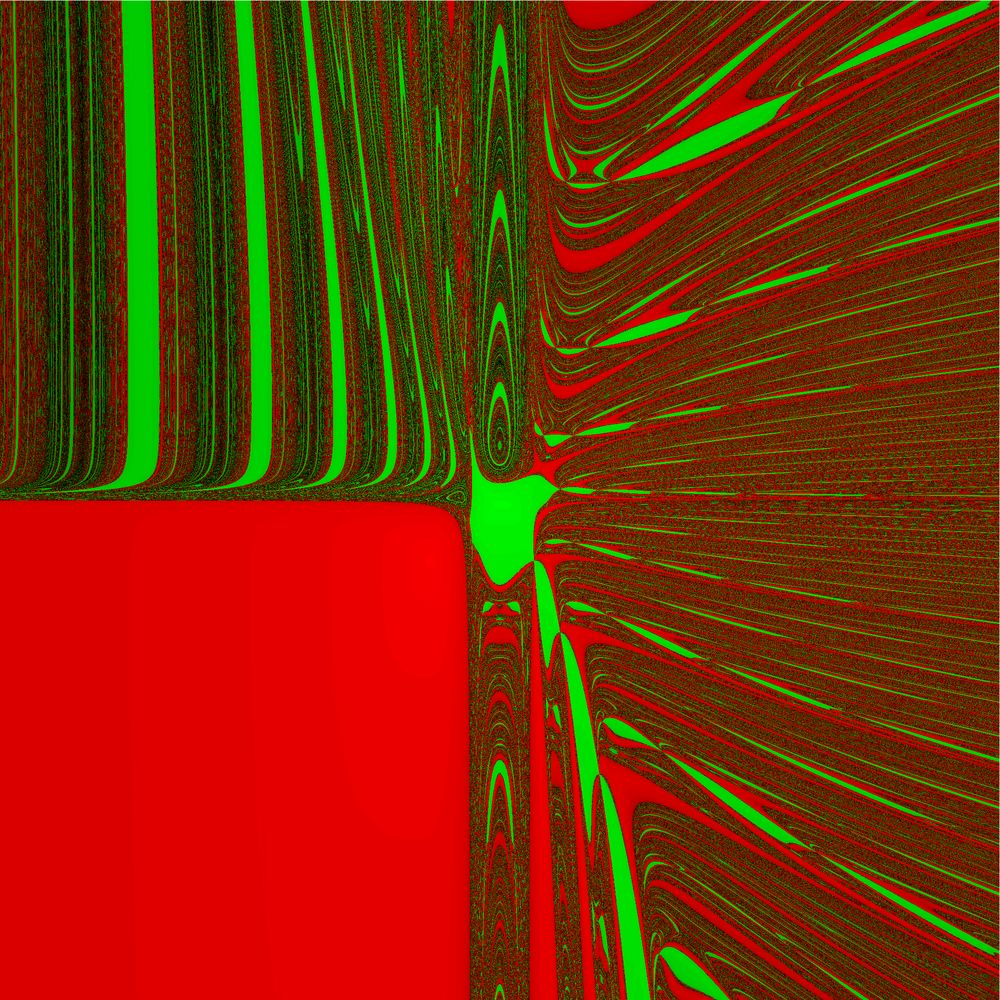}&\includegraphics[width=5.6cm]{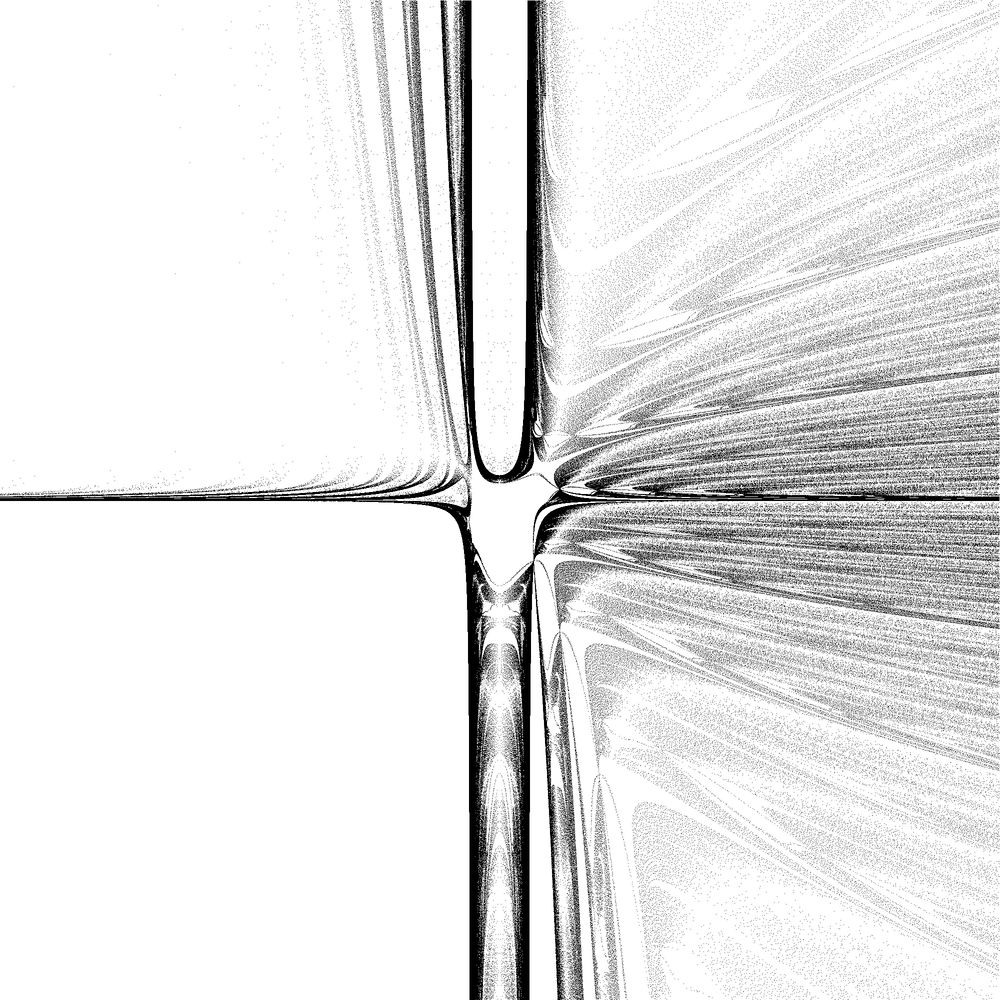}\\
  \end{tabular}
  \caption{%
    \em\small
    $\omega$-limits (in color, each color corresponding to a different basin of attraction) and $\alpha$-limits (in black and white)
    for the Newton maps of  the polynomial maps (from top to bottom) $f(x,y)=(6-9x^2+24y-9x^2y+9y^2+y^3,x^2+y^2-6)$,
    $g(x,y)=(5x(x^2-1)+y,y^2+x-2)$ and $h(x,y)=(5x(x^2-1)-5y,10(y^2+x)-1)$.
  }
  \label{fig:c1}
  \end{figure}
\begin{figure}
  \centering
  \begin{tabular}{cc}                                                                                                                                                                  
    \includegraphics[width=5.6cm]{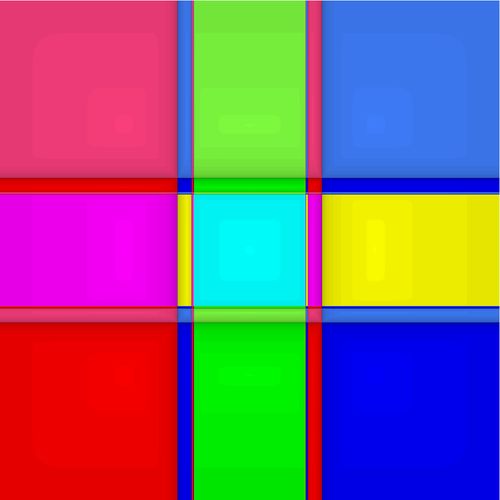}&\includegraphics[width=5.6cm]{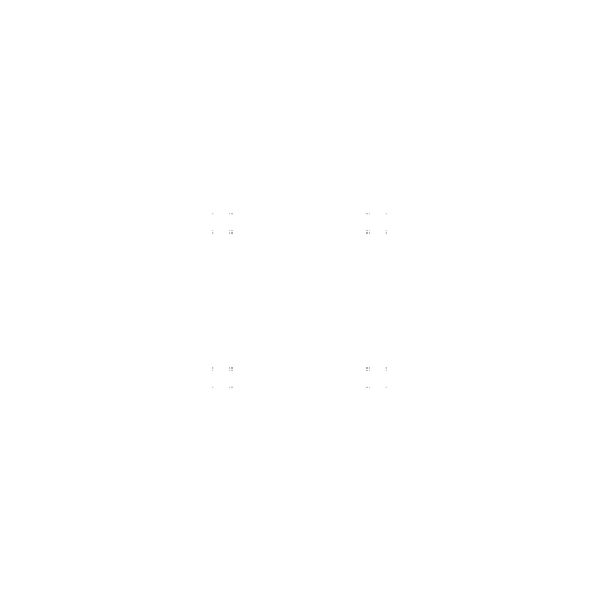}\\
    \includegraphics[width=5.6cm]{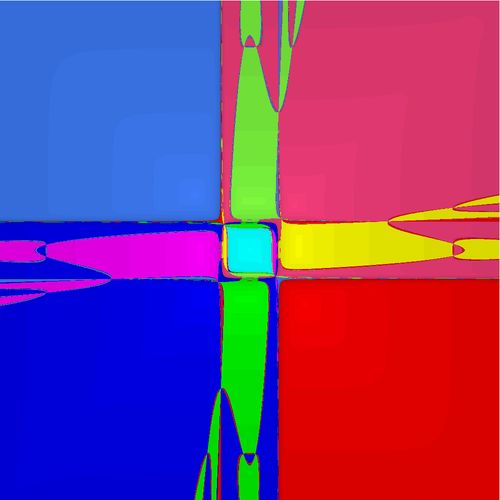}&\includegraphics[width=5.6cm]{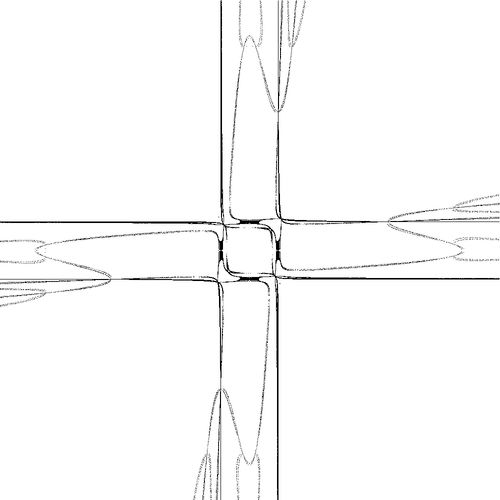}\\
    \includegraphics[width=5.6cm]{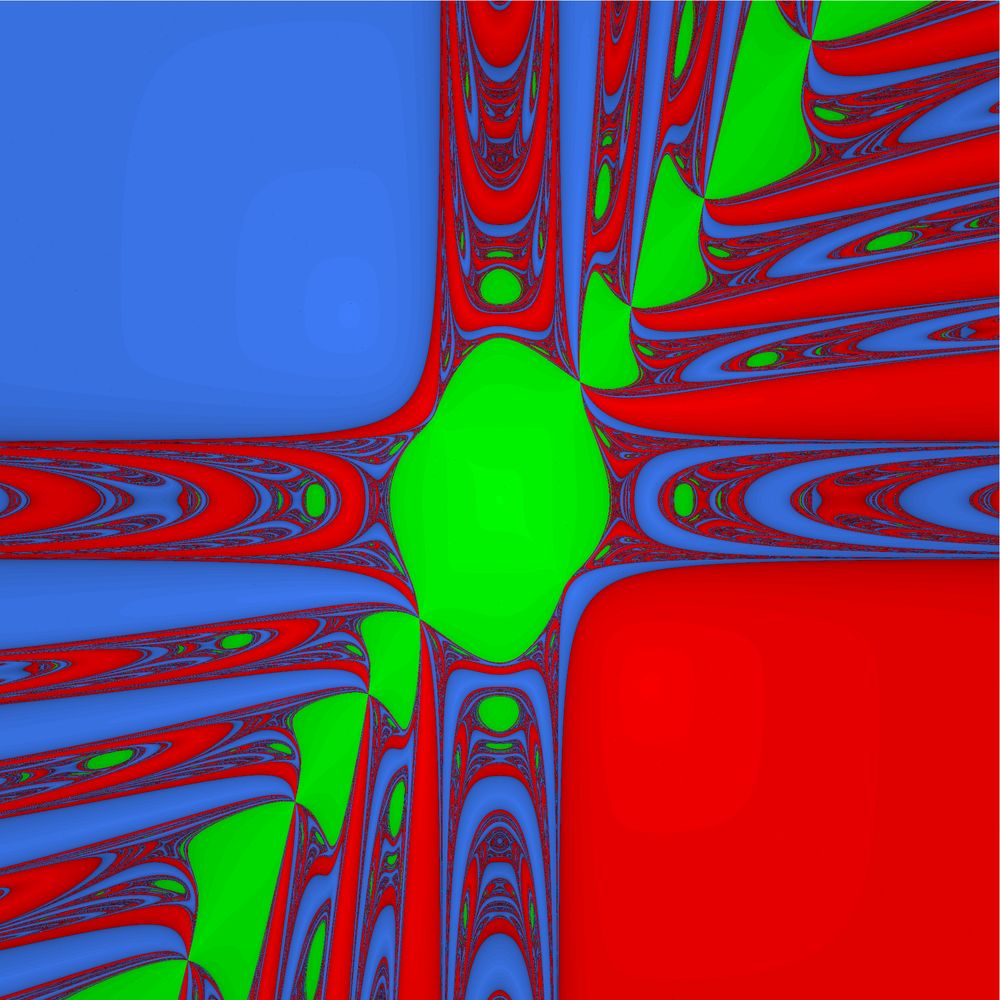}&\includegraphics[width=5.6cm]{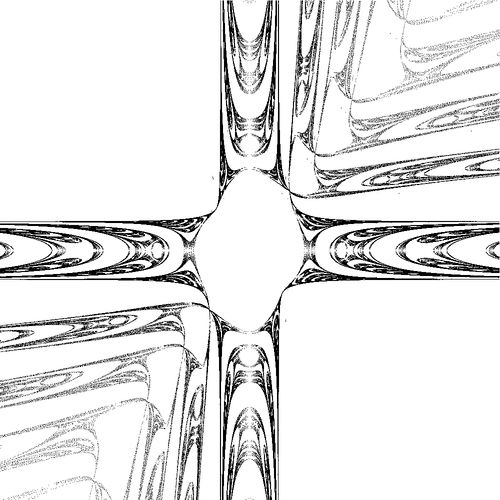}\\
  \end{tabular}
  \caption{%
    \em\small
    $\omega$-limits (in color, each color corresponding to a different basin of attraction) and $\alpha$-limits (in black and white)
    for the Newton maps of  the polynomial maps (from top to bottom) $f(x,y)=(x(x^2-1),y(y^2-1))$,
    $g(x,y)=(20x(x^2-1)+y,20y(y^2-1)+x)$ and $h(x,y)=(x(x^2-1)+y,y(y^2-1)+3x)$.
  }
  \label{fig:c2}
  \end{figure}
\begin{figure}
  \centering
  \begin{tabular}{cc}                                                                                                                                                                  
    \includegraphics[width=5.6cm]{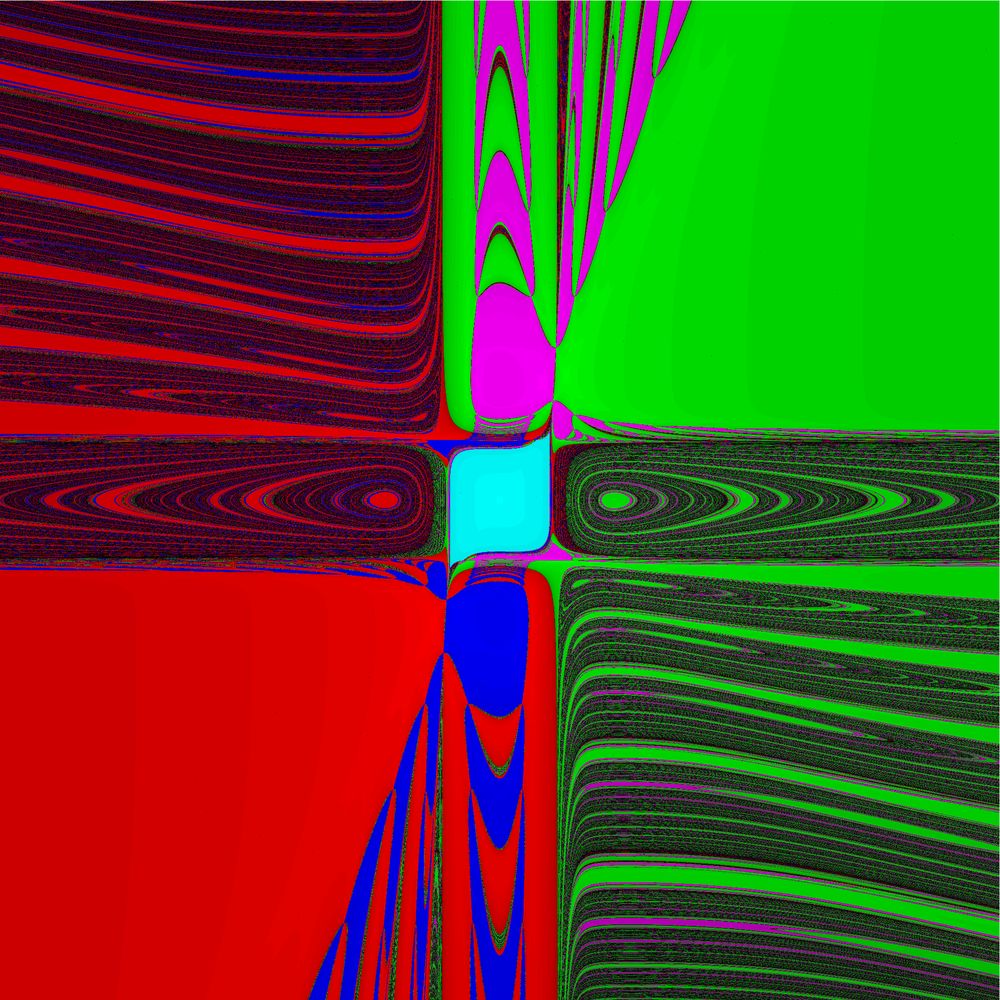}&\includegraphics[width=5.6cm]{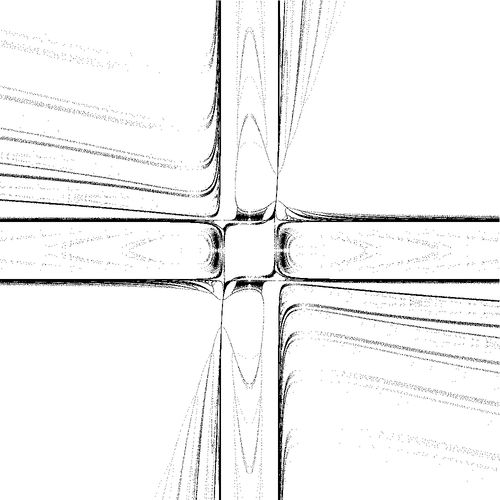}\\
    \includegraphics[width=5.6cm]{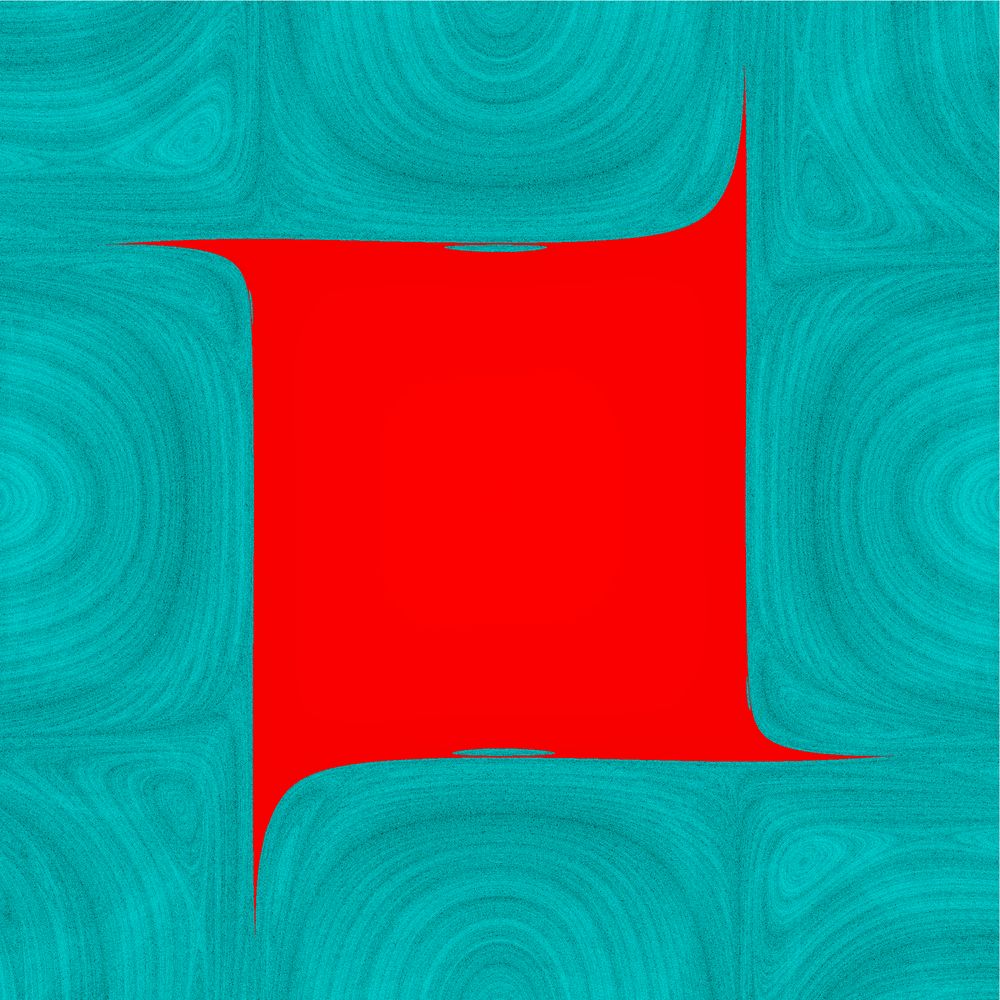}&\includegraphics[width=5.6cm]{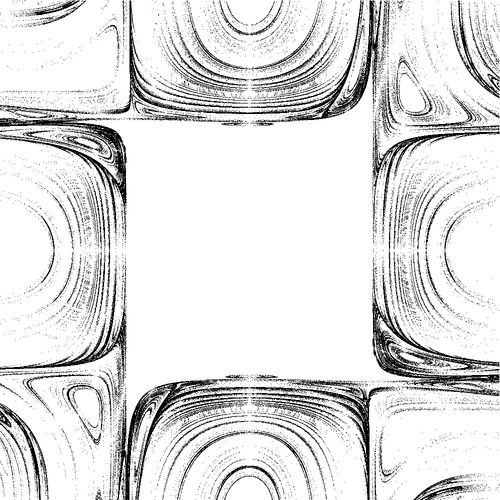}\\
    \includegraphics[width=5.6cm]{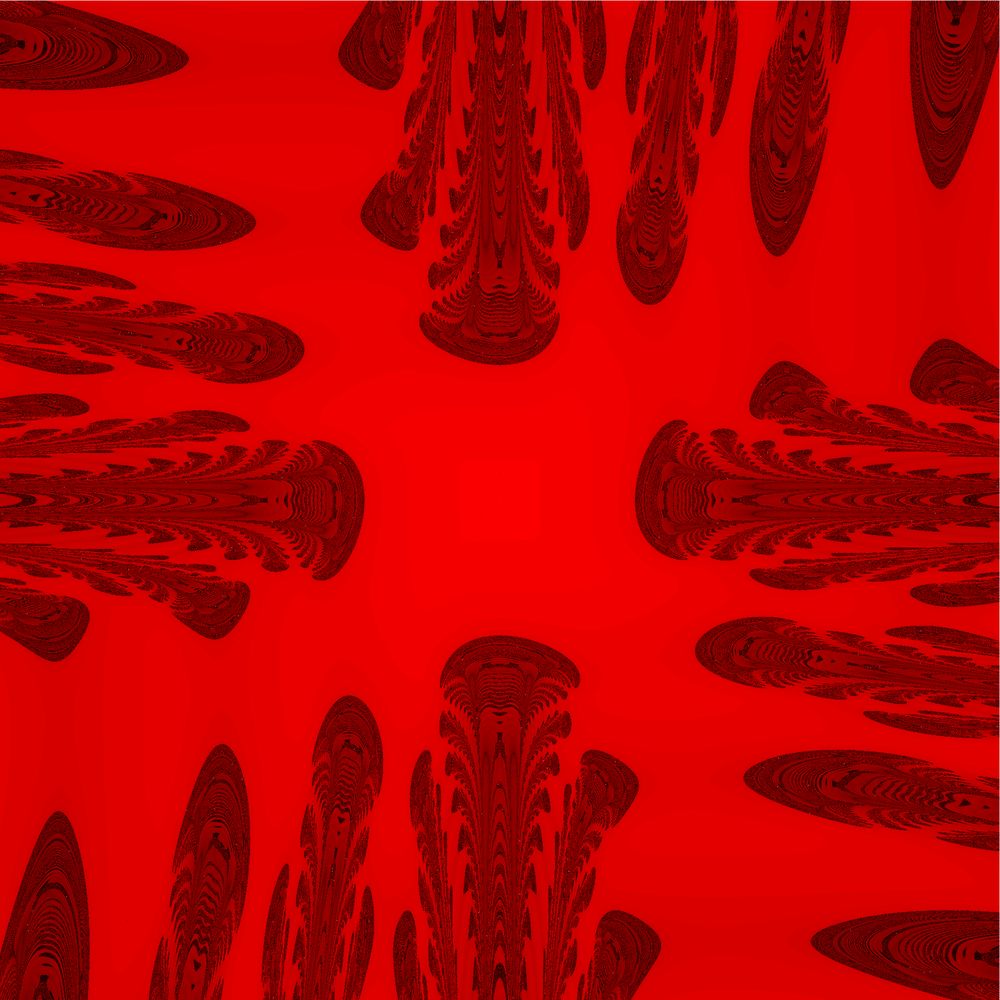}&\includegraphics[width=5.6cm]{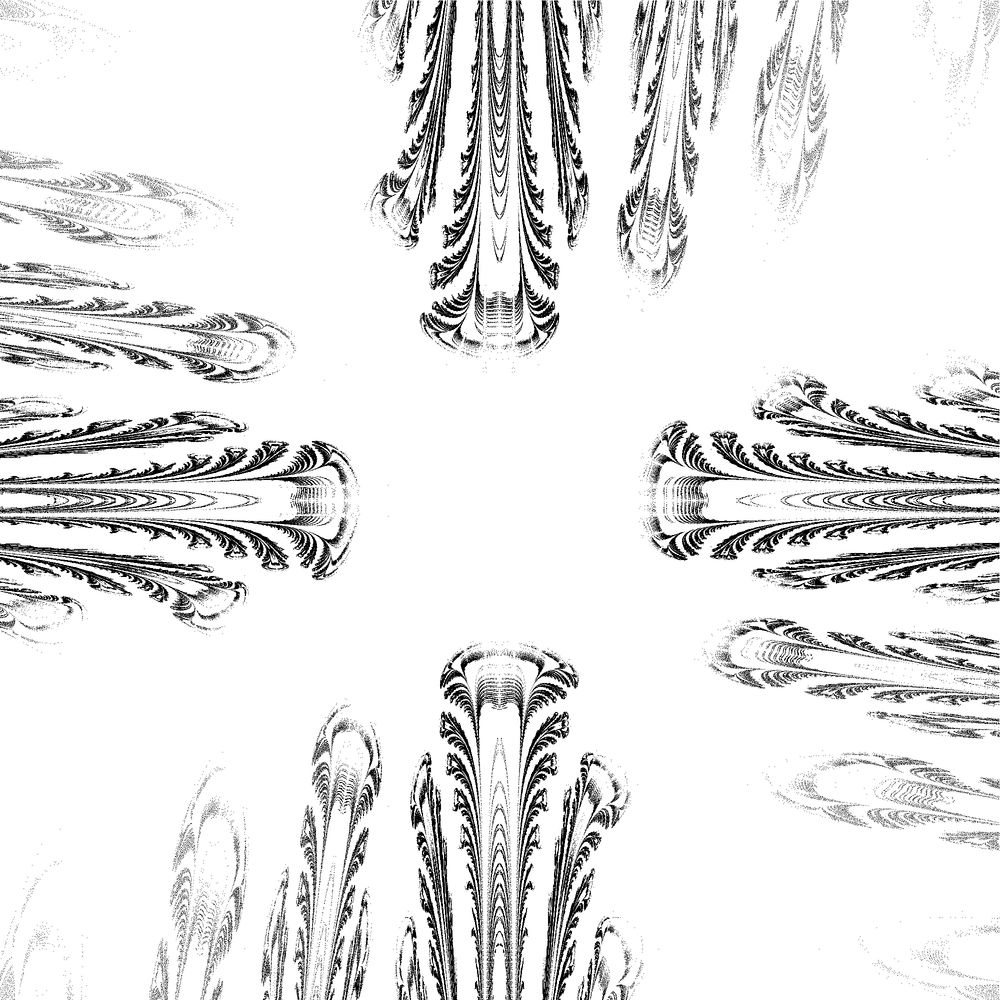}\\
  \end{tabular}
  \caption{%
    \em\small
    $\omega$-limits (in color, each color corresponding to a different basin of attraction) and $\alpha$-limits (in black and white)
    for the Newton maps of  the polynomial maps (from top to bottom) $f(x,y)=(10x(x^2-1)+3y,y(y^2-1)-x)$,
    $g(x,y)=(10x(x^2-1)+7y,y(y^2-1)-x)$ and $h(x,y)=(x(x^2-1)+60y,y(y^2-1)-60x)$.
  }
  \label{fig:c3}
  \end{figure}
\begin{figure}
  \centering
  \begin{tabular}{cc}                                                                                                                                                                  
    \includegraphics[width=5.6cm]{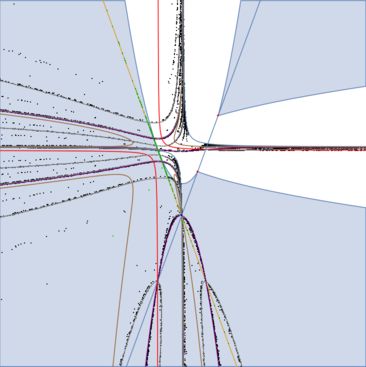}&\includegraphics[width=5.6cm]{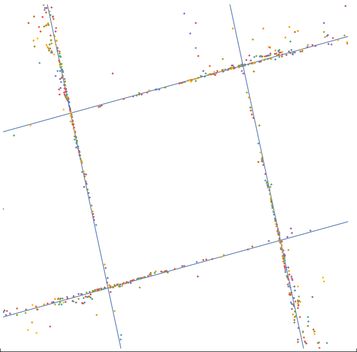}\\
    \includegraphics[width=5.6cm]{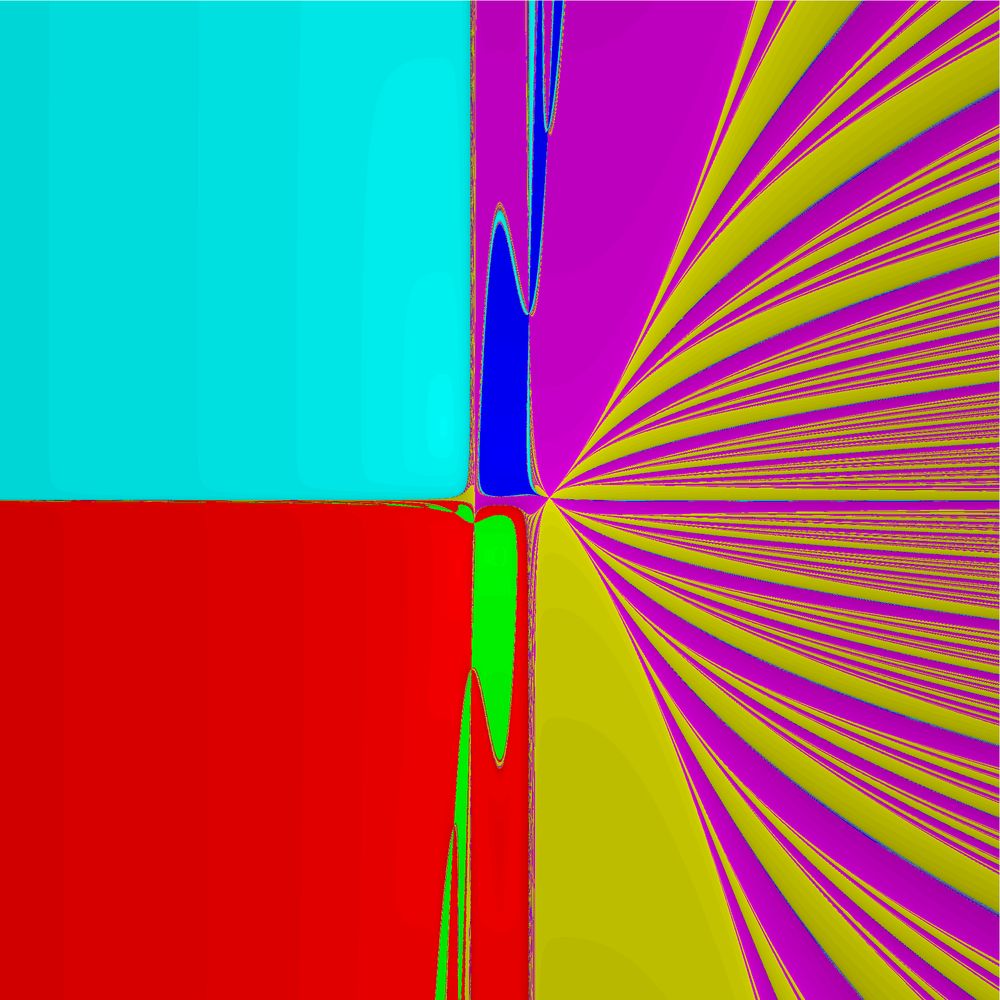}&\includegraphics[width=5.6cm]{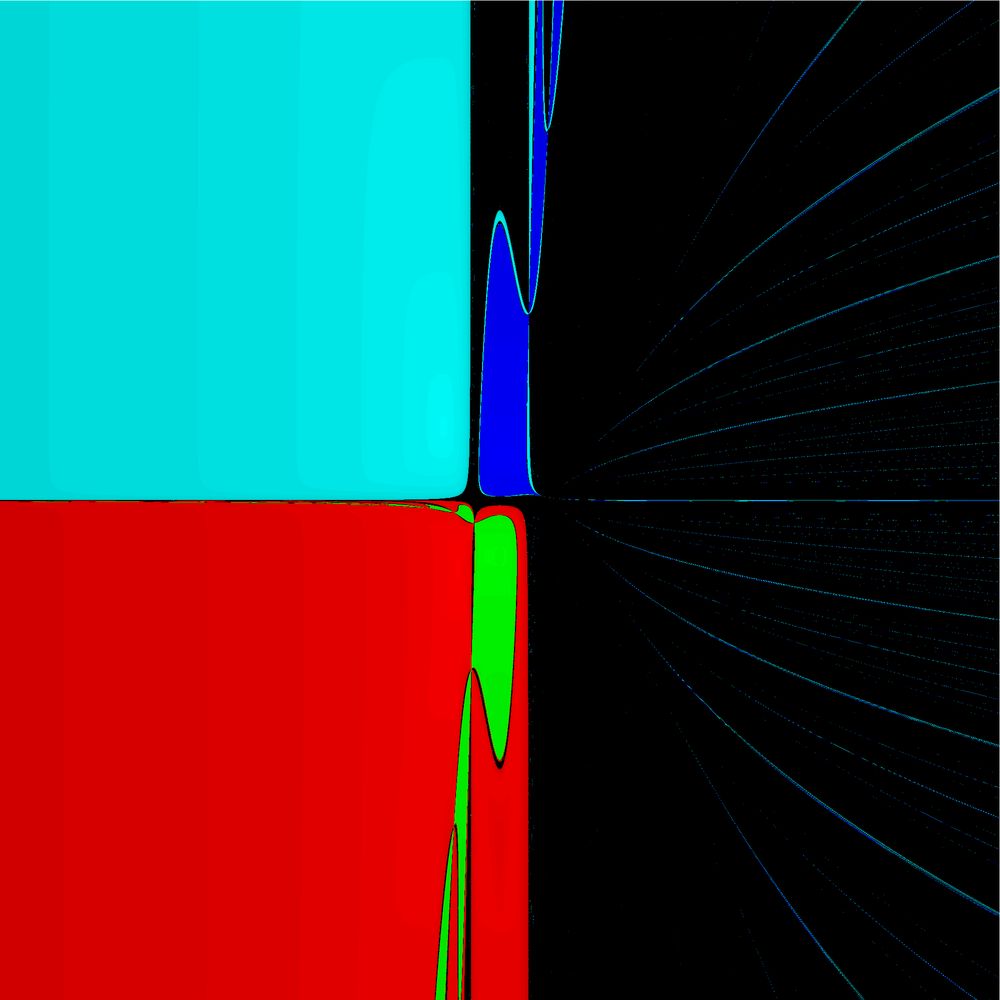}\\
    \includegraphics[width=5.6cm]{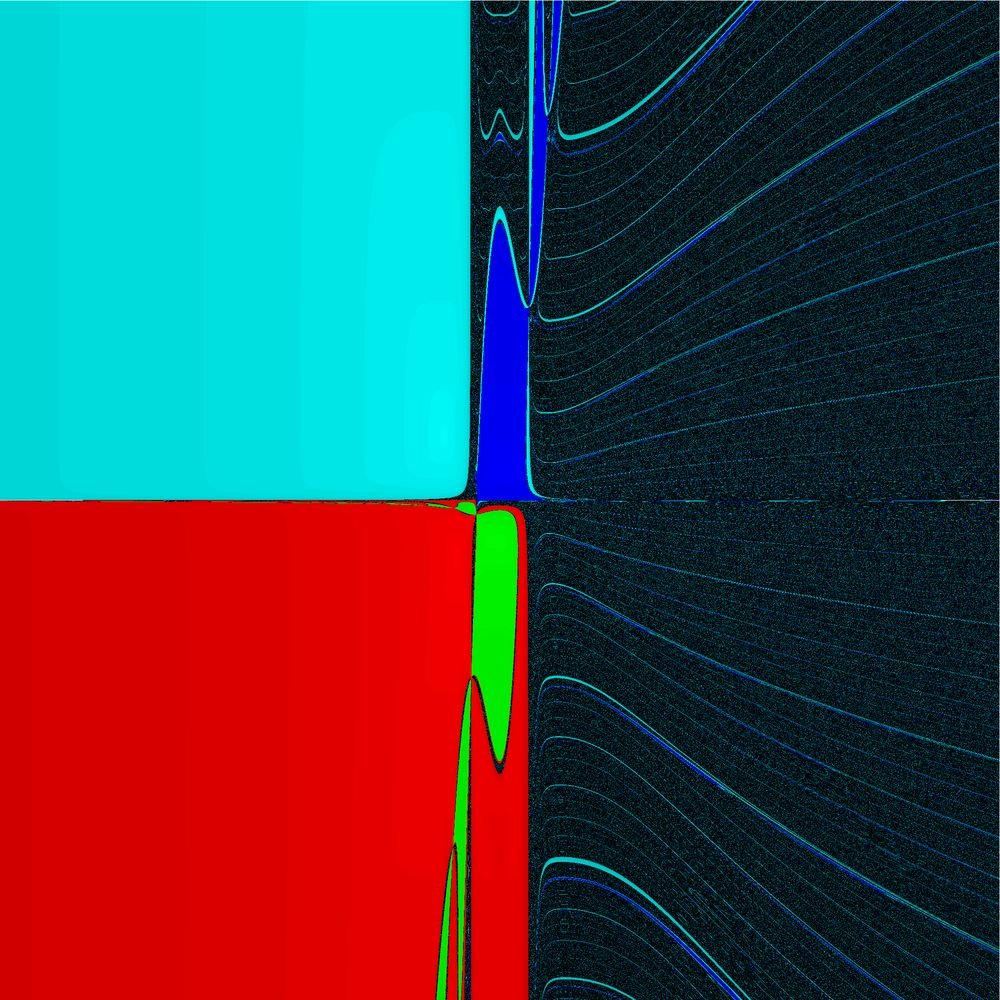}&\includegraphics[width=5.6cm]{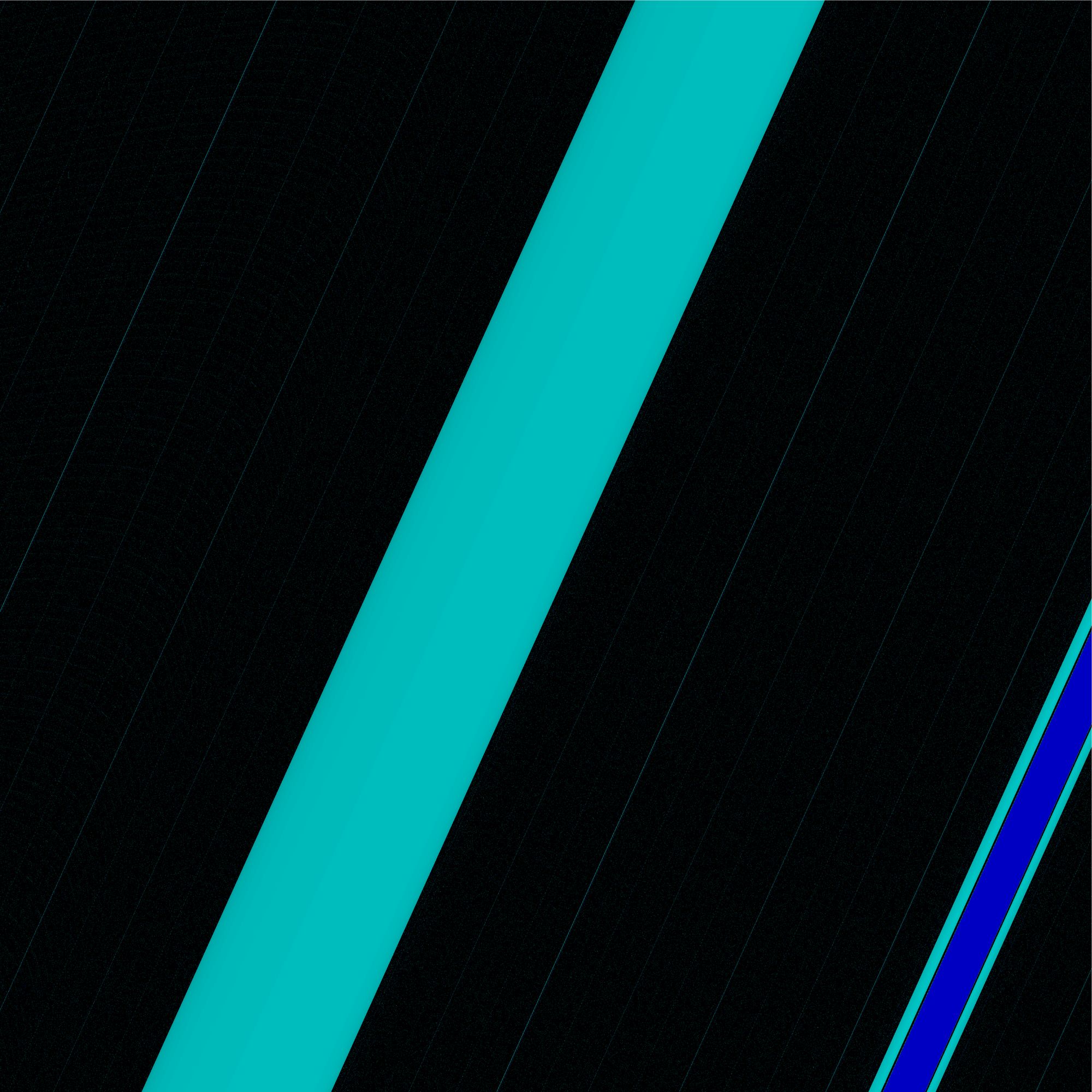}\\
  \end{tabular}
  \caption{%
    \em\small\vspace{-.1cm}
    (Top, left) Main elements of the dynamics of $N_g$ in Fig.~\ref{fig:q1}. (Top, right) A typical orbit of a cyan point in
    Fig.~\ref{fig:c3}(middle) and the 4 ghost lines of the map. (Middle and bottom) Basins of attraction of $N_{f_\alpha}$ for
    $f_\alpha(x,y)=(x^2(x-1)+y,x-\alpha-y^2)$ with resp. $\alpha=-0.997462,-0.997461,-0.5$ in the square $[-10,10]^2$
    and a detail in $[3.00000015,3.00000025]\times[4,4.0000001]$ for $\alpha=-0.995$.
  }
  \label{fig:misc}
  \end{figure}

Our final observation is that, while a few examples of Newton maps of polynomials with maximal number of roots
studied numerically cannot grant that our Conjecture 2 holds, one thing that supports its validity is how easy it is
to find such conditions violated as soon as the number of real roots is not maximal anymore. 
\section*{Acknowledgments}
I am in great debt with J. Yorke and J. Hawkins for many discussions that greatly helped the development of this article
and I am grateful to S. van Strien for some clarification on real one-dimensional dynamics. All pictures, except the two in
the top row of Fig.~\ref{fig:misc}, were generated by code written by the author in Python and C/C++. All calculations were
performed on the HPCC of the College of Arts and Sciences at Howard University.
\bibliography{refs}
\end{document}